\documentclass{article}
\usepackage{amsmath,amssymb,amsthm,thmtools,enumitem,bm,graphicx,xcolor,microtype}
\usepackage[T1]{fontenc}

\usepackage[style = ext-numeric, sorting = nyt, giveninits = false, maxbibnames = 8, minbibnames = 3, backref = true]{biblatex}
\renewbibmacro{in:}{}
\DeclareFieldFormat{titlecase}{\MakeSentenceCase*{#1}}
\DeclareFieldFormat{titlecase:journaltitle}{{#1}}

\usepackage[unicode]{hyperref}

\theoremstyle{definition}
\newtheorem{dfn}{Definition}[section]
\newtheorem{ntt}[dfn]{Notation}
\newtheorem{ex}[dfn]{Example}
\newtheorem{case}{Case}

\theoremstyle{remark}
\newtheorem{rmk}[dfn]{Remark}

\theoremstyle{plain}
\newtheorem{thm}[dfn]{Theorem}
\newtheorem*{thm*}{Theorem}
\newtheorem{prop}[dfn]{Proposition}
\newtheorem{lem}[dfn]{Lemma}

\newtheorem{fct}[dfn]{Fact}

\newtheorem{qst}[dfn]{Question}
\newtheorem{clm}{Claim}

\declaretheoremstyle[headfont = \normalfont\itshape, qed = $\blacksquare_{\mathrm{Claim}}$]{prooc}
\declaretheorem[style = prooc, name = Proof of Claim, numbered = no]{prooc}

\def\sectionautorefname~#1\null{Chapter~#1\null}
\def\subsectionautorefname~#1\null{Section~#1\null}

\title{Distal expansions of the integers and the $p$-adic fields}
\author{Koki Okura\thanks{Doctoral Program in Mathematics, University of Tsukuba, Ibaraki 305-8571, Japan\\E-mail: \href{\detokenize{mailto:k_okura@math.tsukuba.ac.jp}}{\detokenize{k_okura@math.tsukuba.ac.jp}}}}
\date{}

\begin{document}

\maketitle

\begin{abstract}
    This paper investigates expansions of distal structures by a unary subset that arises as the image of a projection map.
    We first provide a sufficient condition for such an expansion to remain distal.
    Based on this criterion, we establish the distality of three kinds of expansions involving the integers or the $p$-adic fields.
    Let $R$ be an almost sparse sequence.
    We prove that $(\mathbb{Z};<,+,R)$ is distal, thereby answering a question posed by Tong.
    Furthermore, we show the distality of $(\mathbb{Q}_p;+,\cdot,p^{\mathbb{Z}})$ and $(\mathbb{Q}_p;+,\cdot,p^{\mathbb{Z}},p^R)$.The latter provides an example of a NIP expansion of the $p$-adic field without the rationality of the Poincar\'e series.
\end{abstract}

\tableofcontents

\section{Introduction}
\label{chp:Intro}

In 2013, Simon \cite{Simon2013} introduced \emph{distal theories} as a special subclass of NIP theories, intended to characterize those NIP theories that are `purely unstable.'
Distality not only provides a finer classification within NIP but also implies desirable combinatorial properties such as the strong Erd\H{o}s-Hajnal property and upper bounds for Zarankiewicz's problem \cite{Chernikov2018,Chernikov2020}.

Numerous theories have been identified as distal.
Typical examples include o-minimal and P-minimal theories; in particular, the real field and the $p$-adic fields are distal.
Another example is the ordered additive group of integers.
An AKE-style characterization of distal henselian valued fields was given in \cite{Aschenbrenner2022}.

A subject of ongoing interest is determining which expansions of distal structures remain distal.
For instance, Hieronymi and Nell \cite{Hieronymi2017} investigated expansions of o-minimal ordered groups, proving the distality of expansions of a real field by a cyclic multiplicative subgroup of its positive part.
In particular, $(\mathbb{R};+,\cdot,2^\mathbb{Z})$ is distal, where $2^\mathbb{Z} = \left\{ 2^z \mid z \in \mathbb{Z} \right\}$.
In a different direction, Tong \cite{Tong2025} proved that expansions of $(\mathbb{Z};<,+)$ by a `sparse sequence' are distal, provided the sequence is `congruence-periodic.'

The main result of this paper is the following:
\begin{thm*}
    Let $R$ be an almost sparse sequence of integers.
    Let $p^\mathbb{Z} = \left\{ p^z \mid z \in \mathbb{Z} \right\}$ and $p^R = \left\{ p^r \mid r \in R \right\}$.
    Then, the following structures are distal:
    \begin{itemize}
        \item $(\mathbb{Z};<,+,R)$
        \item $(\mathbb{Q}_p;+,\cdot,p^\mathbb{Z})$
        \item $(\mathbb{Q}_p;+,\cdot,p^\mathbb{Z},p^R)$
    \end{itemize}
\end{thm*}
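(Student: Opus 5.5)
The plan is to reduce all three items to a single abstract criterion, which I will call the \emph{projection criterion}, and then verify its hypotheses case by case. The setting is a distal structure $\mathcal{M}$ together with a unary predicate $P = \pi(D)$, where $D$ is definable in some auxiliary distal structure $\mathcal{N}$ and $\pi$ is a projection map. The criterion will say that $(\mathcal{M},P)$ is distal provided three things hold. First, a relative quantifier elimination: every formula in $(\mathcal{M},P)$ is equivalent to a Boolean combination of formulas of the form $\exists \bar y \in P^n\,\varphi(\bar x,\bar y)$ with $\varphi$ an $\mathcal{M}$-formula. Second, the induced structure on $P$ is interpretable in a distal structure through $\pi$. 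Third, a ``fibre-finiteness'' condition: for each $\varphi$, the witnesses $\bar y$ can be chosen from a finite set that is uniformly definable from $\bar x$ in the induced structure.

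To prove the criterion I will use Simon's characterization of distality by indiscernible sequences. Take an indiscernible sequence $I_1 + (b) + I_2$ in $(\mathcal{M},P)$ and suppose $I_1+I_2$ is indiscernible over a parameter $c$. The aim is to show that $I_1+(b)+I_2$ is also indiscernible over $c$. By the relative quantifier elimination it suffices to handle the formulas $\exists \bar y\in P^n\varphi$. By fibre-finiteness I can fix the witnesses for $c$ and the elements of $I$ to be finitely many tuples $\bar e$ from $P$. Distality of the induced structure on $P$, transported via $\pi$, lets me absorb these $\bar e$: the sequence stays indiscernible over $c\bar e$ after inserting $b$. Distality of $\mathcal{M}$ then finishes the argument.

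The three applications go as follows.
\begin{itemize}
\item For $(\mathbb{Z};<,+,R)$, I would use the known quantifier elimination for almost sparse sequences, in the style of Semenov and Point, stated relative to $R$. The induced structure on $R$ is essentially $(\mathbb{N};<)$ together with the functions $r_n \mapsto r_{n+k}$ and finitely many congruence data. This structure is distal because it is interpretable in a Presburger-type structure. Almost sparseness, meaning $r_{n+1}-r_n\to\infty$ fast enough, should give fibre-finiteness, because any sum $\sum k_i r_{n_i}$ determines the indices $n_i$ up to boundedly many choices.
\item For $(\mathbb{Q}_p;+,\cdot,p^{\mathbb{Z}})$, I take $P = p^{\mathbb{Z}}$, which is the image of the definable section of the valuation. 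The induced structure is the value group $(\mathbb{Z};<,+)$, which is distal. The needed relative quantifier elimination should follow from a Mariaule-style analysis of $p$-adic fields with a cyclic multiplicative subgroup.
\item The third item is the second item with the extra predicate $p^R$. Its induced structure on $p^{\mathbb{Z}}$ is $(\mathbb{Z};<,+,R)$, which is distal by the first item. So the third case follows by stacking the criterion, with the first item serving as input.
\end{itemize}

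I expect the main obstacle to be the relative quantifier elimination for $(\mathbb{Q}_p;+,\cdot,p^{\mathbb{Z}},p^R)$, together with its fibre-finiteness. Formulas can mix field operations with elements of $p^{\mathbb{Z}}$ in nonlinear ways. I would first try to show that any polynomial equation in elements $p^{z_i}$ with coefficients in $\mathbb{Q}_p$ forces a valuation-dominance relation. This is a Mann-type property: either one term dominates, or the terms cancel in a pattern controlled by the residue field. From such a relation one gets a definable finite bound on the relevant exponents $z_i$, which is exactly the fibre-finiteness the criterion needs.
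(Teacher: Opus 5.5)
\textbf{There is a genuine gap: the decisive step of your criterion does not go through.} When you evaluate $\exists\bar y\in P^n\,\varphi(\bar a_k,c,\bar y)$ along the sequence, the witnesses depend on $\bar a_k$ and $c$ jointly. So even if they lie in a finite set definable from $(\bar a_k,c)$, that set moves with $k$. There is then no fixed finite $\bar e$ to ``absorb'', and neither distality of the induced structure on $P$ nor distality of $\mathcal M$ can be invoked. Moreover, the $\bar a_k$ are not in $P$, so distality on $P$ only speaks about the $P$-parts of the sequence. You would also need a stable-embeddedness statement to lift indiscernibility from the induced structure back to the whole structure.

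The missing idea is \emph{variable separation}. After shrinking the sequence to a smaller cut and using parameters from the discarded part, every relevant element of $P$ built from $(\bar a_k,c)$ should be an $\mathcal M$-term in elements depending on $\bar a_k$ alone and elements of $P$ depending on $c$ alone. This is the core of the paper, and it is proved structure by structure from indiscernibility:
\begin{itemize}
\item In $\mathbb{Q}_p$, one shows $v_p\bigl(\sum_i u_i(\bar a_k)r_i(c)\bigr)=v_p\bigl(u(\bar a_k)r(c)\bigr)$ on a truncation. The proof is by induction on the number of summands, using distality of the field and the isosceles property.
\item In $(\mathbb{Z};<,+,R)$, for suitably modified terms, $\lambda(u(\bar a_k)+r(c))$ is uniformly $\lambda(u(\bar a_k))$, $S(u(\bar a_k))$, or a term in $c$ alone.
\end{itemize}
The paper's criterion then runs in two steps. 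First it handles parameters in $P$, using distality of the induced structure and the fact that the type of an element of $P$ over a closed set is determined by its induced type. Then it handles arbitrary parameters, using distality of $\mathcal M$ and the fact that the $\mathcal M$-type determines the full type once all projections lie in the base.

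Your hypothesis (1) also fails for the first item. Point's quantifier elimination assumes congruence periodicity, which is exactly what the theorem drops, and in your form it is false in general. Take $r_n=n!+\epsilon_n$ ($n\ge 2$), where $(\epsilon_n)$ is a $0/1$ sequence containing every finite word. In any nonzero operator the term with the largest shift dominates, so $R$ is almost sparse. A Presburger condition on $x$ and elements of $R$ is decided by three things: the order pattern of the large elements, their residues (which here equal the $\epsilon_k$), and the exact values of boundedly many small ones. It cannot force adjacency. Hence, for large $x\in R$, the truth of each $\exists\bar y\in R^n\varphi(x,\bar y)$ depends only on the parity of $x$. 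The definable set $\{x\in R: S(x)\text{ odd}\}$ is therefore not a Boolean combination of such formulas.

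For the same reason, the induced structure on $R$ is an essentially arbitrarily coloured order. It is not interpretable in Presburger arithmetic; it is distal because coloured orders are dp-minimal. Also, almost sparseness is not ``$r_{n+1}-r_n\to\infty$ fast enough'': $(2^n+n)$ is not almost sparse. The paper proves no quantifier elimination here. It builds a back-and-forth system and extracts only the two type-determination statements.

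As set up, the third item is not an instance of your criterion either. With $P=p^{\mathbb Z}$, the predicate $p^R$ is extra structure on $P$, and the relative elimination you would need is precisely what you leave open. The paper instead takes $(\mathbb{Q}_p;+,\cdot,p^{\mathbb Z})$ as the base and the coloured order on $p^R$ as the induced structure. The key fact is this: for $b\in p^R$ outside a suitably closed subfield $K$, every nonzero polynomial over $K$ in $b,\dots,S^{d-1}(b)$ (with $d=\deg R$) has a strictly dominant term, so no cancellation occurs. Every other $S^z(b)$ is algebraic over these. Only the back-and-forth from the first item is used, not its distality.
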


Notably, we do not require $R$ to be congruence-periodic.
Consequently, the first result provides an affirmative answer to the question of whether congruence periodicity can be omitted, as posed by Tong \cite[Question 1.4]{Tong2025}.
The second result is analogous to the distality of $(\mathbb{R};+,\cdot,2^\mathbb{Z})$ and strengthens the NIP of $(\mathbb{Q}_p;+,\cdot,p^\mathbb{Z})$, which is an immediate consequence of Mariaule's result \cite[Corollary 6.5]{Mariaule2018}.
The third structure combines the previous two; as it was not previously known to even have NIP, it serves as the first example of an NIP expansion of the $p$-adic field that lacks the rationality of the Poincar\'e series.

To prove these results, we first establish a distality criterion for an expansion of a distal structure $\mathcal{N}$ by a unary subset $R$ that is the image of a projection map.
Briefly stated, $(\mathcal{N},R)$ is distal if:
\begin{enumerate}
    \item Its elementary behavior is determined by that of $\mathcal{N}$ and a distal structure $\mathcal{R}$ on $R$.
    \item It admits `variable separation.'
\end{enumerate}
This criterion is presented in \autoref{chp:Ds}, following a review of the definition and basic properties of distality.
Similar criteria have been proposed by Hieronymi and Nell \cite[Theorem 2.1]{Hieronymi2017} and Aschenbrenner, Chernikov, Gehret, and Ziegler \cite[Proposition 7.1]{Aschenbrenner2022}.
We expect our criterion to be applicable to various other expansions.
For instance, while Hieronymi and Nell (ibid.) established the distality of pairs of real closed fields with a standard part map, our criterion might be used to obtain an analogous result for $p$-adically closed fields.

To verify the first condition of the criterion, we employ back-and-forth arguments.
\autoref{chp:ZR} is devoted to the structure $(\mathbb{Z};<,+,R)$.
After examining the basic properties of almost sparse sequences, we establish the necessary back-and-forth and variable separation results, which immediately imply distality via our criterion.
The structures $(\mathbb{Q}_p;+,\cdot,p^\mathbb{Z})$ and $(\mathbb{Q}_p;+,\cdot,p^\mathbb{Z},p^R)$ are treated in \autoref{chp:QR}.
Although quantifier elimination for $(\mathbb{Q}_p;+,\cdot,p^\mathbb{Z})$ was established by Mariaule \cite{Mariaule2017}, we perform a back-and-forth analysis for $(\mathbb{Q}_p;+,\cdot,p^\mathbb{Z},p^R)$.
Subsequently, we establish a common form of variable separation that applies to both structures.
Their distality is again an immediate consequence of our criterion.
Finally, we introduce Poincar\'e series and demonstrate that $(\mathbb{Q}_p;+,\cdot,p^\mathbb{Z},p^R)$ does not have the rationality of the Poincar\'e series.

\section*{Acknowledgement}

This work was supported by JST SPRING, Grant Number JPMJSP2124 and the Research Institute for Mathematical Sciences, an international Joint Usage/Research Center located in Kyoto University.
The author is deeply grateful to his supervisor, Kota Takeuchi, for his thorough guidance throughout the preparation of this paper.

\subsection{Notation}
\label{sec:IntroNtt}

We write $\overline{a} \in X$ to indicate that each of its component belongs to $X$.
We sometimes denote the concatenation of tuples $\overline{a}$ and $\overline{b}$ or the union of sets $A$ and $B$ by juxtaposition notation $\overline{a}\overline{b}$ or $AB$.
For an $L$-theory $T$, $|T|$ denotes the number of $L$-formulas, i.e., $|T| = |L| + \aleph_0$.
For a subset $X$ of a structure and a set of functions $\mathcal{F}$, we let $\langle X \rangle_\mathcal{F}$ denote the subset generated by $X$ with respect to $\mathcal{F}$:
\begin{equation*}
    \langle X \rangle_\mathcal{F} = \left\{ t(\overline{a}) \mid \text{$\overline{a} \in X$ and $t$ is a term involving functions in $\mathcal{F}$}  \right\}.
\end{equation*}

\begin{rmk}\label{rmk:Monster}
    Throughout most of this paper, we work within monster models.
    By a `monster model', we mean a structure $\mathcal{U}$ such that any structure interpretable in $\mathcal{U}$ (including $\mathcal{U}$ itself) is $\kappa$-saturated and strongly $\kappa$-homogeneous for a sufficiently large cardinal $\kappa$.
    Hodges \cite[Chapter 10]{Hodges1993} introduces the notion of $\kappa$-bigness and demonstrates that it implies both $\kappa$-saturation and strong $\kappa$-homogeneity.
    Furthermore, $\kappa$-bigness is preserved under interpretation, and for any $\kappa$, every structure has a $\kappa$-big elementary extension.
    Thus, $\kappa$-big models serve as suitable monster models.
    Unless an ambient model is specified, we take elements and parameter sets from a monster model.
    A subset of a $\kappa$-big monster model is called \emph{small} if its cardinality is less than $\kappa$.
    We assume that all tuples and parameter sets considered in a monster model are small.
\end{rmk}

\section{Distality}
\label{chp:Ds}

\subsection{Generalities on distality}
\label{sec:DsIntro}

Throughout this section, we fix a complete $L$-theory $T$ with infinite models.
We say the \emph{dp-rank} of $T$ is at least $\kappa$ if it has an ICT-pattern of depth $\kappa$.
(We omit the formal definition of an ICT-pattern as it is not required for our arguments.)
If a theory has dp-rank larger than any cardinal, we consider the rank to be $\infty$.
While several equivalent definitions exist, $T$ has \emph{NIP} (the Non-Independence Property) if its dp-rank is less than $\infty$, or equivalently, less than $|T|^+$.
Theories with dp-rank $1$ are called \emph{dp-minimal}.
For a comprehensive treatment of NIP theories, we refer the reader to Simon's textbook \cite{Simon2015}.

Simon \cite{Simon2013} introduced the notion of distality to characterize NIP theories that are `purely unstable.'

\begin{dfn}
    Consider linearly ordered sets $I$ and $J$.
    We denote by $I + J$ the concatenation of $I$ and $J$ (i.e., $i < j$ for any $i \in I$ and $j \in J$).
    A cut $(I,J)$ of $I + J$ is \emph{Dedekind} if $I$ has no maximum element and $J$ has no minimum element.
\end{dfn}

\begin{dfn}\label{dfn:Distality}
    $T$ is \emph{distal} if the following holds:
    For any sequence $(\overline{a_k})_{k\in I+(c)+J}$ and any singleton $b$, if:
    \begin{itemize}
        \item $(I,J)$ is Dedekind;
        \item $(\overline{a_k})_{k\in I+(c)+J}$ is indiscernible;
        \item $(\overline{a_k})_{k\in I+J}$ is indiscernible over $b$;
    \end{itemize}
    then $(\overline{a_k})_{k\in I+(c)+J}$ is indiscernible over $b$.
    A structure is distal if its theory is distal.
\end{dfn}

\begin{rmk}
    \autoref{dfn:Distality} is `external characterization' of distality rather than its original definition (see Simon \cite{Simon2013}).
    Other characterizations, such as those involving distal invariant types, strong honest definitions, or the smoothness of generically stable measures, can be found in Simon \cite{Simon2015}.
\end{rmk}

\begin{rmk}\label{rmk:DistalImplications}
    Distal theories are known to possess NIP.
    See Gehret and Kaplan \cite[Proposition 2.9]{Gehret2020} for a quick proof or Walker \cite[\S6]{Walker2023} for a stronger result.
    Unlike NIP, a reduct of a distal structure is not necessarily distal.
    For example, $(\mathbb{R},<)$ is distal, but its reduct to the empty language is not.
    It is known that dp-minimal structures with no unrealized generically stable type are distal \cite[Corollary 2.30]{Simon2013}.
    Such structures include all dp-minimal linearly ordered structures and the $p$-adic fields.
\end{rmk}

\begin{lem}\label{lem:BasicsofDistality}
    Equivalent definitions of distality are obtained by:
    \begin{enumerate}
        \item Allowing $\overline{a_k}$ to be an infinite tuple.
        \item Replacing `indiscernible' with `indiscernible over $C$' and `indiscernible over $b$' with `indiscernible over $Cb$', where $C$ is an arbitrary parameter set.
        \item Replacing $b$ with an arbitrary parameter set $B$.
        \item Requiring only that $\mathrm{tp}(\overline{a_k} / b) = \mathrm{tp}(\overline{a_c} / b)$ for any $k \in I+J$, rather than the indiscernibility of $(\overline{a_k})_{k\in I+(c)+J}$ over $b$.
        \item Restricting $(I,J)$ to a specific class of Dedekind cuts.
    \end{enumerate}
\end{lem}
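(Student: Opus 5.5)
Each of the five modifications will be checked against the basic Definition~\ref{dfn:Distality} using only compactness, saturation of the monster model, and standard manipulations of indiscernible sequences. For each item, one direction is trivial: either the variant is a special case of the definition or the definition is a special case of the variant. So in each case only one implication needs an argument.

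For (1), assume distality for finite tuples. Suppose $(\overline{a_k})$ consists of infinite tuples and satisfies the hypotheses. Indiscernibility over $b$ is a statement about finitely many coordinates at a time. Restricting each $\overline{a_k}$ to a finite subtuple of coordinates preserves all three hypotheses, so the finite case applies and gives the conclusion.

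For (3), take a parameter set $B$ such that $(\overline{a_k})_{k\in I+J}$ is indiscernible over $B$. Again only finitely many elements $b_1,\dots,b_n$ of $B$ appear in any given formula, so it suffices to treat a finite tuple. This is done by induction on $n$, absorbing parameters into the sequence. Consider the sequence $(\overline{a_k} b_1\dots b_{n-1})_k$. It is indiscernible on $I+(c)+J$ by the induction hypothesis, and indiscernible over $b_n$ on $I+J$ by assumption. Applying the singleton case to this sequence gives indiscernibility over $b_1\dots b_n$.

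Item (2) is handled the same way: replace $\overline{a_k}$ by $\overline{a_k}C$, using (1) since $C$ may be infinite. This reduces the relative version to the absolute one; the converse is the case $C=\emptyset$.

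Item (4) is the main obstacle. Assume only that $\mathrm{tp}(\overline{a_c}/b)=\mathrm{tp}(\overline{a_k}/b)$ for $k\in I+J$ in every such configuration, and derive full indiscernibility over $b$. The plan is to show by induction on $m$ that for $k_1<\dots<k_m$ in $I+(c)+J$ containing $c$, the type over $b$ of $(\overline{a_{k_i}})_i$ equals that of an increasing tuple from $I+J$. The idea is to absorb neighbouring elements into a new sequence. Since the cut is Dedekind, the other indices lie in $I$ or $J$ at positive distance from $c$, so choose intervals $I'\subseteq I$ and $J'\subseteq J$ near $c$ avoiding them. Then form the sequence $\overline{a_k}\,\overline{a_{k_1}}\dots$ over $k\in I'+(c)+J'$, with the other fixed elements appended as parameters, and apply the weak hypothesis over $b$ together with those fixed elements. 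This uses (2) with $C$ consisting of the other $\overline{a_{k_i}}$. One must check that $I'$ and $J'$ can still be chosen to form a Dedekind cut, which may require first stretching the sequence by compactness. Alternatively, replace this stretching by the observation that Dedekind cuts of any cofinality can be realized.

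For (5), use compactness together with indiscernibility to transfer a counterexample indexed by an arbitrary Dedekind cut to one indexed by a cut in the prescribed class, for example $\omega^{*}+(c)+\omega$ reversed appropriately, or $\mathbb{Q}$-like orders. The failure of indiscernibility over $b$ is witnessed by a formula involving finitely many indices. So the situation is determined by the EM-type of the sequence together with the type over $b$, and can be realized along any other Dedekind order type.
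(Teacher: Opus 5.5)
\textbf{Items (1)--(4).} These follow the paper's proof:
\begin{itemize}
\item (1): finite character.
\item (2): absorbing $C$ into the tuples.
\item (3): absorbing parameters one at a time. Your finite induction plus finite character works as well as the paper's transfinite induction.
\item (4): treating the other indices as parameters and applying the weak hypothesis to a sub-cut around $c$.
\end{itemize}
In (4) you need neither an induction on $m$ nor any stretching. Let $k_{j-1}<c<k_{j+1}$ be the neighbouring fixed indices. Then $\{i\in I : i>k_{j-1}\}$ has no maximum and $\{l\in J : l<k_{j+1}\}$ has no minimum, so they already form a Dedekind cut; the paper takes an $\omega$-sequence and an $\omega^*$-sequence inside them. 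Let $C$ be the other fixed tuples. A single application of the weak hypothesis to $(\overline{a_k}C)_k$ gives $\mathrm{tp}(\overline{a_c}C/b)=\mathrm{tp}(\overline{a_i}C/b)$ for such $i$, which is all that is needed. Drop your fallback of ``realizing other cuts'': it would be circular once (5) is proved through (4).

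\textbf{The gap is in (5).} You keep an arbitrary witness $\psi$ of the failure, at fixed indices $k_1<\dots<k_n$ containing $c$. You then assert that the configuration is determined by EM-types and so transfers to any Dedekind order. To prove the resulting partial type consistent, you must embed each finite fragment of $\widehat{I}+(c)+\widehat{J}$ into $I+(c)+J$. The embedding must fix $c$ and send the new witness indices to positions where $\psi(\ldots,b)$ still holds. If $k_1=\min I$ while $\widehat{I}$ has no minimum, there is no room, and nothing in your argument guarantees that $\psi$ holds at other positions.

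Here is a concrete case. In the random $3$-hypergraph, take the $a_k$ with no hyperedges among them, and take $b$ whose only hyperedge with them is $\{a_{\min I},a_c,b\}$. Then every failure of indiscernibility over $b$ involves $\min I$.

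The missing step is to localize the failure at $c$ first. This is exactly what (4) gives and what the paper does. If $T$ is not distal, (4) yields a configuration and a formula $\varphi(\overline{x},y)$ with $\models\varphi(\overline{a_k},b)$ for all $k\in I+J$ and $\not\models\varphi(\overline{a_c},b)$. Now consider the type saying three things:
\begin{itemize}
\item $(x_k)$ has the EM-type of the sequence;
\item the $\widehat{I}+\widehat{J}$-part has the EM-type of $(\overline{a_k})_{k\in I+J}$ over $y$;
\item $\neg\varphi(x_c,y)$.
\end{itemize}
This type is finitely satisfiable in the original configuration. Every finite subset of $\widehat{I}+(c)+\widehat{J}$ order-embeds into $I+(c)+J$ fixing $c$, since $I$ and $J$ are infinite.
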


\begin{proof}
    (1) Note that in general, if $|\overline{a_k}| = \infty$, then $(\overline{a_k})_{k\in I}$ is indiscernible if and only if for every finite subtuple $\widehat{a_k}$ of $\overline{a_k}$, $(\widehat{a_k})_{k\in I}$ is indiscernible.

    (2) Generally speaking, $(\overline{a_k})_{k\in I}$ is indiscernible over $C$ if and only if $(\overline{a_k}C)_{k\in I}$ is indiscernible, where we regard $\overline{a_k}C$ as an augmented tuple.

    (3) By well-ordering $B$ as $\left\{ b_{\alpha} \mid \alpha < \lambda \right\}$, we can show that $(\overline{a_k})_{k\in I+(c)+J}$ is indiscernible over $\left\{ b_{\beta} \mid \beta < \alpha \right\}$ for any $\alpha \leq \lambda$ by transfinite induction.

    (4)
    Consider a sequence $(\overline{a_k})_{k\in I+(c)+J}$ and a singleton $b$ satisfying the conditions in \autoref{dfn:Distality}.
    To show that $(\overline{a_k})_{k\in I+(c)+J}$ is indiscernible over $b$, it suffices to take arbitrary $m,n \geq 0$, $i_1<\dots<i_m<i_{m+1}$ from $I$, and $j_1<\dots<j_n$ from $J$, and show that:
    \begin{equation*}
        \mathrm{tp}(\overline{a_{i_1}}\dots\overline{a_{i_m}}\overline{a_c}\overline{a_{j_1}}\dots,\overline{a_{j_n}} / b)
        = \mathrm{tp}(\overline{a_{i_1}}\dots\overline{a_{i_m}}\overline{a_{i_{m+1}}}\overline{a_{j_1}}\dots,\overline{a_{j_n}} / b).
    \end{equation*}
    Since $(I,J)$ is Dedekind, we can choose infinite sequences $i_{m+1} = i'_0 < i'_1 < \dots$ from $I$ and $j'_0 > j'_1 > \dots$ from $J$ such that $j_1 > j'_0$.
    Let $I' = (i'_k)_{k \in \omega}$ and $J' = (j'_k)_{k \in \omega^*}$, where $\omega^*$ is the reverse order type of $\omega$.
    Then:
    \begin{itemize}
        \item $(\overline{a_k})_{k\in I'+(c)+J'}$ is indiscernible over $\overline{a_{i_1}}\dots\overline{a_{i_m}}\overline{a_{j_1}}\dots\overline{a_{j_n}}$;
        \item $(\overline{a_k})_{k\in I'+J'}$ is indiscernible over $\overline{a_{i_1}}\dots\overline{a_{i_m}}\overline{a_{j_1}}\dots\overline{a_{j_n}}b$.
    \end{itemize}
    Condition (4) then implies:
    \begin{equation*}
        \mathrm{tp}(\overline{a_c} / \overline{a_{i_1}}\dots\overline{a_{i_m}}\overline{a_{j_1}}\dots\overline{a_{j_n}}b)
        = \mathrm{tp}(\overline{a_{i_{m+1}}} / \overline{a_{i_1}}\dots\overline{a_{i_m}}\overline{a_{j_1}}\dots\overline{a_{j_n}}b),
    \end{equation*}
    which yields the desired equality.

    (5)
    We show that if $T$ is not distal, a witness of non-distality can be found for an arbitrary Dedekind cut $(\widehat{I},\widehat{J})$.
    Suppose that $T$ is not distal.
    By (4), there exist a Dedekind cut $(I,J)$, a sequence $(\overline{a_k})_{k\in I+(c)+J}$, a singleton $b$, and a formula $\varphi(\overline{x},y)$ such that:
    \begin{itemize}
        \item $(\overline{a_k})_{k\in I+(c)+J}$ is indiscernible;
        \item $(\overline{a_k})_{k\in I+J}$ is indiscernible over $b$;
        \item $\models \varphi(\overline{a_k},b)$ for all $k \in I + J$, but $\not\models \varphi(\overline{a_c},b)$.
    \end{itemize}
    By compactness, we can construct a sequence $(\overline{a'_k})_{k\in \widehat{I}+(c)+\widehat{J}}$ satisfying these same conditions for the given cut $(\widehat{I},\widehat{J})$.
\end{proof}

\begin{rmk}\label{rmk:SaturatedIndices}
    Based on \autoref{lem:BasicsofDistality} (5), we shall assume throughout this paper that the index sets $I$ and $J$ are sufficiently saturated dense linear orders without endpoints.
\end{rmk}

\subsection{Criterion for distality}
\label{sec:DsCriterion}

In this section, we provide a sufficient condition for an expansion $(\mathcal{N},R)$ of a structure $\widetilde{\mathcal{N}} = \mathcal{N}$ to be distal, where $R$ is a unary subset of $N$.

The setting is as follows:
Let $\mathcal{R}$ be a structure on $R$ that is $\emptyset$-definable in $\widetilde{\mathcal{N}}$ (that is, all function and relation symbols in $\mathcal{R}$ are $\emptyset$-definable in $\widetilde{\mathcal{N}}$).
Let $\mathcal{F}_N$ (resp. $\mathcal{F}_R$) be a set of functions on $N^n$ (resp. $R^n$) for some $n$ that are $\emptyset$-definable in $\mathcal{N}$ (resp. $\mathcal{R}$).
Let $\lambda:N \to R$ be a function $\emptyset$-definable in $\widetilde{\mathcal{N}}$.
We regard each $h \in \mathcal{F}_R$ as a function on $N^n$ by defining $h(\overline{x}) = h(\lambda(x_1),\dots,\lambda(x_n))$.
Let $\mathcal{F} = \mathcal{F}_N \cup \mathcal{F}_R \cup \left\{ \lambda \right\}$.

Since we work within a monster model of $\widetilde{\mathcal{N}}$ when demonstrating its distality, we assume henceforth that $\widetilde{\mathcal{N}}$ itself is a monster model.

\begin{dfn}\label{dfn:IndisParameter}
    For a sequence $(\overline{a_k})_{k\in I+(c)+J}$, we say $(I',J')$ is a \emph{truncation} of $(I,J)$ if $I'$ is a nonempty final segment of $I$ and $J'$ is a nonempty initial segment of $J$, and write $(I',J') \subset_{\mathrm{t}} (I,J)$.
    A \emph{parameter set outside} $(I',J')$ is a small subset of $\left\{ \overline{a_k} \mid k \in (I \setminus I') \cup (J \setminus J') \right\}$.
\end{dfn}

\begin{dfn}\label{dfn:SeparationofTerms}
    In the above setting, $\widetilde{\mathcal{N}}$ admits \emph{variable separation} if for any sequence $(\overline{a_k})_{k\in I+(c)+J}$ and any singleton $b$ satisfying the assumptions in \autoref{dfn:Distality}, and for any $\mathcal{F}$-term $t(\overline{x},y)$, there exist:
    \begin{itemize}
        \item a truncation $(I',J') \subset_{\mathrm{t}} (I,J)$,
        \item an $\mathcal{F}_N$-term $u(\overline{z},\overline{w})$,
        \item tuples of $\mathcal{F}$-terms $\overline{r}(\overline{x}) = (r_1(\overline{x}),\dots,r_m(\overline{x}))$ and  $\overline{s}(y) = (s_1(y),\dots,s_n(y))$ involving parameters outside $(I',J')$,
    \end{itemize}
    such that for any $k \in I' + (c) + J'$:
    \begin{equation*}
        t(\overline{a_k},b) = u(\overline{r}(\overline{a_k}),\overline{s}(b)),
    \end{equation*}
    and if $t$ is of the form $h(\overline{t})$ for some $h \in \mathcal{F}_R \cup \left\{ \lambda \right\}$, then $\overline{s}(b) \in R$.
\end{dfn}

We can choose a truncation $(I',J')$ which works for all terms $t$ and produce no new parameter.
Specifically:

\begin{lem}\label{lem:StrongerSeparationofTerms}
    Suppose that $\widetilde{\mathcal{N}}$ admits variable separation.
    Take a sequence $(\overline{a_k})_{k\in I+(c)+J}$ and a singleton $b$ satisfying the assumptions in \autoref{dfn:Distality}.
    Then there exist a truncation $(I',J') \subset_{\mathrm{t}} (I,J)$ and a parameter set $P$ outside $(I',J')$ with the following property:
    For any $\mathcal{F}$-terms $t(\overline{x},y)$ involving $P$, there exist an $\mathcal{F}_N$-term $u(\overline{z},\overline{w})$ and tuples of $\mathcal{F}$-terms $\overline{r}(\overline{x})$, $\overline{s}(y)$ involving $P$ such that for any $k \in I' + (c) + J'$:
    \begin{equation*}
        t(\overline{a_k},b) = u(\overline{r}(\overline{a_k}),\overline{s}(b)),
    \end{equation*}
    and if $t$ is of the form $h(\overline{t})$ for some $h \in \mathcal{F}_R \cup \left\{ \lambda \right\}$, then $\overline{s}(b) \in R$.
\end{lem}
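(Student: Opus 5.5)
We will build the truncation and the parameter set by a countable iteration that closes off under the single-term operation given by variable separation. The key is that there are only countably many $\mathcal{F}$-terms over a countable parameter set. Each application of \autoref{dfn:SeparationofTerms} shrinks the truncation by a controlled amount and adds only finitely many parameters. We therefore proceed in $\omega$ stages and take limits.

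\textbf{Setting up the enumeration and the first stage.} Put $P_0=\emptyset$ and $(I_0,J_0)=(I,J)$. At stage $n$ we are given a truncation $(I_n,J_n)$ and a countable parameter set $P_n$ outside $(I_n,J_n)$. Enumerate the countably many $\mathcal{F}$-terms $t(\overline{x},y)$ with parameters from $P_n$.

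To handle these, we absorb the parameters into the sequence. The parameters of $P_n$ are tuples $\overline{a_k}$ with $k\notin I_n\cup J_n$. So $(\overline{a_k})_{k\in I_n+(c)+J_n}$ is indiscernible over $P_n$, and $(\overline{a_k})_{k\in I_n+J_n}$ is indiscernible over $P_nb$. Here $(I_n,J_n)$ is still Dedekind, being a truncation of a Dedekind cut. Let $\overline{a'_k}=\overline{a_k}\,\overline{p}$, where $\overline{p}$ enumerates $P_n$, as in \autoref{lem:BasicsofDistality}(1),(2). Then a term $t$ with parameters in $P_n$ becomes a parameter-free term $t'(\overline{x}',y)$, and the new sequence with $b$ satisfies the hypotheses of \autoref{dfn:Distality}.

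\textbf{Inner induction over the terms.} We handle the terms one at a time. For each term, apply variable separation to the sequence restricted to the current truncation; restricting to a truncation preserves the hypotheses. This yields a smaller truncation, a term $u$, tuples $\overline{r},\overline{s}$, and finitely many new parameters. These parameters lie outside the new truncation, though possibly inside $(I_n,J_n)$. Since the terms are countably many, we get a decreasing $\omega$-chain of truncations.

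The main obstacle is ensuring that the intersection of this chain is still a truncation, that is, nonempty. Here we use \autoref{rmk:SaturatedIndices}: $I$ and $J$ are sufficiently saturated dense orders, so cofinality and coinitiality at the cut exceed $\aleph_0$. Each truncation $I'$ of $I$ is a nonempty final segment, and we may shrink it so that it has a least element $i_m$. By the cofinality assumption on the cut, the countable increasing sequence $(i_m)$ is bounded in $I$. Hence some final segment lies inside every $I_m$, and symmetrically for $J$.

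\textbf{Closing off.} Let $(I_{n+1},J_{n+1})$ be such a common truncation, and let $P_{n+1}$ be $P_n$ together with all new parameters. The set $P_{n+1}$ is countable and lies outside $(I_{n+1},J_{n+1})$.

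Repeating the same cofinality argument over the stages $n<\omega$ gives a truncation $(I',J')$ inside all $(I_n,J_n)$, and we set $P=\bigcup_n P_n$. This $P$ is countable, hence small, and lies outside $(I',J')$.

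Any term $t$ involving $P$ uses finitely many parameters, all in some $P_n$. It was therefore treated at stage $n$, with $\overline{r},\overline{s}$ involving only parameters in $P_{n+1}\subseteq P$. The identity $t(\overline{a_k},b)=u(\overline{r}(\overline{a_k}),\overline{s}(b))$ holds on $I_{n+1}+(c)+J_{n+1}\supseteq I'+(c)+J'$, and the clause $\overline{s}(b)\in R$ is inherited directly from \autoref{dfn:SeparationofTerms}.
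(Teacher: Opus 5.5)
Your proposal is correct and follows essentially the paper's proof. Like the paper, you use $\omega$ outer stages, exhaust the terms over $P_n$ at each stage by repeated applications of variable separation, and get nonempty intersected truncations from the saturation of $I$ and $J$. You also make explicit the absorption of parameters into the tuples on the current truncation, which the paper leaves implicit.

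The one caveat is that ``countably many terms'' presumes $\mathcal{F}$ is countable. This holds in all of the paper's applications but is not part of the general setting of \autoref{thm:DistalityCriterion}. In general the inner iteration must be transfinite, of length $|\mathcal{F}|+\aleph_0$, with limit stages handled by the same cofinality argument; this is exactly what the paper's ``well-ordering \dots\ transfinitely many times'' does.
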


\begin{proof}
    Let $(I_0,J_0)$ = $(I,J)$ and $P_0 = \emptyset$.
    We construct a sequence of truncations $(I_0,J_0) \supset_{\mathrm{t}} (I_1,J_1) \supset_{\mathrm{t}} \dots$ and a sequence of parameter sets $P_0 \subseteq P_1 \subseteq \dots$ such that $P_m$ is outside $(I_m,J_m)$.
    Suppose that we have obtained $(I_m,J_m)$ and $P_m$.
    By well-ordering the set of $\mathcal{F}$-terms involving $P_m$ and applying variable separation transfinitely many times, we can construct a truncation $(I_{m+1},J_{m+1}) \subset_{\mathrm{t}} (I_m,J_m)$ and $P_{m+1} \supseteq P_m$ outside $(I_{m+1},J_{m+1})$ such that for any $\mathcal{F}$-term $t(\overline{x},y)$ involving $P_m$, $t(\overline{a_k},b)$ is decomposed on $I_{m+1} + (c) + J_{m+1}$ using some $\mathcal{F}$-terms $\overline{r}(\overline{x})$ and $\overline{s}(y)$ involving $P_{m+1}$. (Note that we can always ensure nonempty $I_{m+1}$ and $J_{m+1}$ due to \autoref{rmk:SaturatedIndices}.)
    We define $I' = \bigcap_{i\in\omega} I_i$, $J' = \bigcap_{i\in\omega} J_i$, and $P = \bigcup_{i\in\omega} P_i$, which satisfy the required conditions.
\end{proof}

\begin{thm}\label{thm:DistalityCriterion}
    $\widetilde{\mathcal{N}}$ is distal if the following conditions hold:
    \begin{enumerate}
        \item $\mathcal{N}$ and $\mathcal{R}$ are distal.
        \item For any $M \subseteq N$ with $\langle M \rangle_{\mathcal{F}} = M$ and any $b,b' \in R$, if $\mathrm{tp}^{\mathcal{R}}(b / M \cap R) = \mathrm{tp}^{\mathcal{R}}(b' / M \cap R)$, then $\mathrm{tp}^{\widetilde{\mathcal{N}}}(b / M) = \mathrm{tp}^{\widetilde{\mathcal{N}}}(b' / M)$.
        \item For any $M \subseteq N$ with $\langle M \rangle_{\mathcal{F}} = M$ and any $b,b' \in N$, if $\mathrm{tp}^\mathcal{N}(b / M) = \mathrm{tp}^\mathcal{N}(b' / M)$ and $\lambda(x) \in M$ for any $x \in \langle b M \rangle_{\mathcal{F}_N}$, then $\mathrm{tp}^{\widetilde{\mathcal{N}}}(b / M) = \mathrm{tp}^{\widetilde{\mathcal{N}}}(b' / M)$.
        \item $\widetilde{\mathcal{N}}$ admits variable separation.
    \end{enumerate}
\end{thm}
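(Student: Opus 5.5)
We verify the external characterization of \autoref{dfn:Distality}, using the weakened form in \autoref{lem:BasicsofDistality} (4). Fix an indiscernible sequence $(\overline{a_k})_{k\in I+(c)+J}$ and a singleton $b$ with $(\overline{a_k})_{k\in I+J}$ indiscernible over $b$. By \autoref{lem:BasicsofDistality} (1) we may close each $\overline{a_k}$ under $\mathcal{F}$-terms: replace $\overline{a_k}$ by an enumeration of $\langle \overline{a_k}\rangle_{\mathcal{F}}$, uniformly in $k$. Indiscernibility is preserved because the functions are $\emptyset$-definable. Apply \autoref{lem:StrongerSeparationofTerms} to get a truncation $(I',J')$ and a parameter set $P$ outside it. By \autoref{lem:BasicsofDistality} (2), working over $P$ does no harm: $(\overline{a_k})_{k\in I'+(c)+J'}$ is indiscernible over $P$, and $(\overline{a_k})_{k \in I'+J'}$ is indiscernible over $Pb$. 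It then suffices to show $\mathrm{tp}^{\widetilde{\mathcal{N}}}(\overline{a_c}/Pb) = \mathrm{tp}^{\widetilde{\mathcal{N}}}(\overline{a_i}/Pb)$ for some (every) $i\in I'$.

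Next, transfer the problem to the roles of $M$ and $b$ in conditions (2) and (3). Choose an automorphism-style comparison: let $\sigma$ be the partial map $\overline{a_i}\mapsto\overline{a_c}$ fixing $P$, and extend it to the $\mathcal{F}$-closures $M_i = \langle \overline{a_i} P\rangle_{\mathcal{F}}$ and $M_c = \langle \overline{a_c}P\rangle_{\mathcal{F}}$. It is enough to show that $b$ over $M_i$ and $b$ over $M_c$ have matching types via $\sigma$. Equivalently, find $b'$ with $\mathrm{tp}(\overline{a_c}b/P)=\mathrm{tp}(\overline{a_i}b'/P)$ and prove $\mathrm{tp}^{\widetilde{\mathcal N}}(b/M_i)=\mathrm{tp}^{\widetilde{\mathcal N}}(b'/M_i)$. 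By indiscernibility, $\overline{a_c}$ and $\overline{a_i}$ have the same type over $P$, so $b'$ exists and we may work with $M=M_i$.

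The core step uses separation. Every $\mathcal{F}$-term $t(\overline{a_k},b)$ equals $u(\overline{r}(\overline{a_k}),\overline{s}(b))$ with $\overline{s}(b)$ in $\langle bP\rangle_{\mathcal F}$, and this holds uniformly on $I'+(c)+J'$. Let $B=\langle bP\rangle_{\mathcal F}$, which is generated by finitely many pieces at a time. Apply condition (1) twice. First, use distality of $\mathcal{R}$ on the sequences $(\lambda\text{-parts and }\mathcal{F}_R\text{-parts of }\overline{a_k})$ in $R$ with parameters $B\cap R$; this is justified since $\mathcal{R}$ is $\emptyset$-definable and the $R$-parts of $\overline{a_k}$ remain indiscernible in $\mathcal{R}$ over $B\cap R$ off the cut. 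This gives the $\mathcal{R}$-type equality on the $R$-sort, and condition (2) upgrades it to full $\widetilde{\mathcal N}$-types for elements of $R\cap B$ over $M$. Second, use distality of $\mathcal N$ with the parameter set $B$ (\autoref{lem:BasicsofDistality} (3)) to get equality of $\mathcal N$-types. Because of the clause ``$\overline{s}(b)\in R$ whenever $t=h(\overline t)$'', every value $\lambda(x)$ for $x\in\langle B M\rangle_{\mathcal F_N}$ is already computed inside $\langle (B\cap R) M \rangle$, so the hypothesis of condition (3) holds after first enlarging $M$ by the $R$-part of $B$. One then runs an induction on the structure of terms, alternating condition (2) to add $R$-elements of $B$ one at a time, and condition (3) to add $N$-elements, each time keeping the base $\mathcal{F}$-closed.

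The main obstacle is this bookkeeping. It requires that at each stage the intermediate base is $\mathcal{F}$-closed, and that the $\lambda$-closure condition of (3) genuinely follows from the separation clause. I would first try to organise it as a back-and-forth: build an increasing chain of partial elementary maps between $\langle \overline{a_c}PB\rangle$ and $\langle\overline{a_i}PB\rangle$, using (2) for $R$-elements and (3) for the rest.
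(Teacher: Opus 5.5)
Your proposal has a genuine gap, and it sits in the step you call ``bookkeeping''. The surrounding ingredients are right: closing under $\mathcal F$, using \autoref{lem:StrongerSeparationofTerms}, applying distality of $\mathcal R$ and of $\mathcal N$, and finishing with condition (3) over $\langle A_k (B\cap R)\rangle_{\mathcal F_N}$. But the route you sketch for the core step does not work as stated.

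To apply condition (3), the map $\langle \overline{a_i}P(B\cap R)\rangle_{\mathcal F_N}\to\langle \overline{a_c}P(B\cap R)\rangle_{\mathcal F_N}$ must be elementary in $\widetilde{\mathcal N}$. That is, the \emph{joint} $\widetilde{\mathcal N}$-type of $B\cap R$ over $M_i$ must match the one over $M_c$. Applying distality of $\mathcal R$ once, with parameters $B\cap R$, only makes $(M_k\cap R)_k$ indiscernible over $B\cap R$ in $\mathcal R$. Condition (2) only upgrades the type of a single $e\in R$ over an $\mathcal F$-closed base.

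Try adding $e_1\in B\cap R$ and passing to the closed base $\langle M_i e_1\rangle_{\mathcal F}$. This base contains new elements of $R$, namely values of $\lambda$ and $\mathcal F_R$ on mixed terms. Separation only guarantees these are $\mathcal F_N$-combinations of elements of $M_i$ and $B\cap R$. The $\mathcal R$-type of the next element $e_2$ over them is in general not determined by the $\mathcal R$-type of $B\cap R$ over $M_i\cap R$. So you must re-apply distality of $\mathcal R$ to the new sequence $(\langle \overline{a_k}Pe_1\rangle_{\mathcal F}\cap R)_k$. That requires $(\overline{a_k})_{k\in I'+(c)+J'}$ to be indiscernible over $Pe_1$ in $\widetilde{\mathcal N}$ along the whole sequence. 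The single comparison $\mathrm{tp}(e_1/M_i)=\mathrm{tp}(e_1'/M_i)$ at your fixed $i$ does not give this.

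The missing idea is to state the $R$-part in the same shape as distality. The statement is: for \emph{any} indiscernible sequence of possibly infinite tuples, and any $e\in R$ over which it is indiscernible off the cut, the sequence is indiscernible over $e$. The proof takes three steps:
\begin{enumerate}
\item apply distality of $\mathcal R$ to $(\langle\overline{a_k}\rangle_{\mathcal F}\cap R)_k$;
\item apply condition (2) with $M=\langle\overline{a_k}\rangle_{\mathcal F}$, via an automorphism;
\item conclude with \autoref{lem:BasicsofDistality}~(4).
\end{enumerate}
Because this claim has distality form, the transfinite argument of \autoref{lem:BasicsofDistality}~(3) iterates it over all of $B\cap R$. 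The re-closure under $\mathcal F$ at each stage is then handled automatically. This yields $(A_k)$ indiscernible over $B\cap R$ in $\widetilde{\mathcal N}$, where $A_k=\langle\overline{a_k}P\rangle_{\mathcal F}$.

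Distality of $\mathcal N$ then gives indiscernibility over $(B\cap R)b$ in $\mathcal N$. Condition (3) is applied exactly once, to $b$ itself over $\langle A_k(B\cap R)\rangle_{\mathcal F_N}$. No alternation between (2) and (3) is needed, and (3) is never applied to other elements of $B$.
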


\begin{proof}
    Take a sequence $(\overline{a_k})_{k\in I+(c)+J}$ and a singleton $b$ satisfying the assumptions in \autoref{dfn:Distality}.

    \setcounter{clm}{0}
    \begin{clm}
        If $b \in R$, then $(\overline{a_k})_{k\in I+(c)+J}$ is indiscernible over $b$.
    \end{clm}

    \begin{prooc}
        By \autoref{lem:BasicsofDistality} (4), it suffices to show that $\mathrm{tp}^{\widetilde{\mathcal{N}}}(b,\overline{a_k}) = \mathrm{tp}^{\widetilde{\mathcal{N}}}(b,\overline{a_c})$ for any $k \in I+J$.
        Let $M_k = \langle \overline{a_k} \rangle_{\mathcal{F}}$.
        Since $\mathcal{R}$ is $\emptyset$-definable in $\widetilde{\mathcal{N}}$:
            \begin{itemize}
            \item $(M_k \cap R)_{k\in I+(c)+J}$ is indiscernible in $\mathcal{R}$.
            \item $(M_k \cap R)_{k\in I+J}$ is indiscernible over $b$ in $\mathcal{R}$.
        \end{itemize}
        It follows from the distality of $\mathcal{R}$ that $(M_k \cap R)_{k\in I+(c)+J}$ is indiscernible over $b$ in $\mathcal{R}$.
        For arbitrary $k \in I+J$, we extend the partial elementary map $f:M_k \to M_c$ to an automorphism of $\widetilde{\mathcal{N}}$ and let $b' = f^{-1}(b)$.
        Since $\mathrm{tp}^{\mathcal{R}}(b,M_k \cap R) = \mathrm{tp}^{\mathcal{R}}(b,M_c \cap R)$, we have $\mathrm{tp}^{\mathcal{R}}(b / M_k \cap R) = \mathrm{tp}^{\mathcal{R}}(b' / M_k \cap R)$.
        Condition (2) then implies $\mathrm{tp}^{\widetilde{\mathcal{N}}}(b / M_k) = \mathrm{tp}^{\widetilde{\mathcal{N}}}(b' / M_k)$, which is equivalent to $\mathrm{tp}^{\widetilde{\mathcal{N}}}(b,M_k) = \mathrm{tp}^{\widetilde{\mathcal{N}}}(b,M_c)$.
    \end{prooc}

    \begin{clm}
        For any $b \in N$, $(\overline{a_k})_{k\in I+(c)+J}$ is indiscernible over $b$.
    \end{clm}

    \begin{prooc}
        We choose a truncation $(I',J') \subset_{\mathrm{t}} (I,J)$ and a parameter set $P$ outside $(I',J')$ that satisfy the consequence of \autoref{lem:StrongerSeparationofTerms}.
        For each $k \in I' + (c) + J'$, define $A_k = \langle \overline{a_k} P \rangle_{\mathcal{F}}$.
        Also, let $B = \langle b P \rangle_{\mathcal{F}} \cap R$.
        These definitions ensure the following:
        \begin{itemize}
            \item $(A_k)_{k\in I'+(c)+J'}$ is indiscernible.
            \item $(A_k)_{k\in I'+J'}$ is indiscernible over $Bb$.
            \item $B \subseteq R$.
        \end{itemize}
        By combining these conditions, Claim 1, and the proof of \autoref{lem:BasicsofDistality} (1) and (3), it follows that $(A_k)_{k\in I'+(c)+J'}$ is indiscernible over $B$.
        The distality of $\mathcal{N}$ implies that $(A_k)_{k\in I'+(c)+J'}$ is indiscernible over $Bb$ in $\mathcal{N}$.

        Now, let $M_k = \langle A_k B \rangle_{\mathcal{F}_N}$.
        For any $k \in I'+J'$, note that the map $f:M_k \to M_c$ is partial elementary in $\widetilde{\mathcal{N}}$ because of the indiscernibility of $(A_k)_{k\in I'+(c)+J'}$ over $B$.
        We also have $\mathrm{tp}^\mathcal{N}(b,M_k) = \mathrm{tp}^\mathcal{N}(b,M_c)$.
        Furthermore, for any $\overline{x} \in \langle b M_k \rangle_{\mathcal{F}_N}$ and any $h \in \mathcal{F}_R \cup \left\{ \lambda \right\}$, $h(\overline{x})$ is represented as $u(\overline{r}(\overline{a_k}),\overline{s}(b))$ for some $\mathcal{F}_N$-term $u$ and $\mathcal{F}$-terms $\overline{r}$ and $\overline{s}$ involving $P$, where $\overline{s}(b) \in R$.
        We then have $\overline{r}(\overline{a_k}) \in A_k$ and $\overline{s}(b) \in B$, which imply $h(\overline{x}) \in M_k$.
        In particular, we have $\langle M_k \rangle_{\mathcal{F}} = M_k$ and $\lambda(x) \in M_k$ for all $x \in \langle b M_k \rangle_{\mathcal{F}_N}$.
        Therefore, applying condition (3), we conclude that $\mathrm{tp}^{\widetilde{\mathcal{N}}}(b,M_k) = \mathrm{tp}^{\widetilde{\mathcal{N}}}(b,M_c)$.
    \end{prooc}
\end{proof}

\section{\texorpdfstring{$(\mathbb{Z};<,+)$}{(Z;<,+)} with an almost sparse sequence}
\label{chp:ZR}

This chapter is devoted to the study of the structure $(\mathbb{Z};<,+,R)$, consisting of the ordered additive group of integers expanded by an `almost sparse' sequence $R$.
This structure has been analyzed extensively by several authors, including Lambotte, Point, Sem\"enov, and Tong \cite{Semenov1980,Point2000,Lambotte2020,Lambotte2021,Tong2025}.

Following Sem\"enov's foundational work, Point \cite{Point2000} established quantifier elimination for this structure under the additional assumption that $R$ is `congruence-periodic.'
Subsequently, Lambotte and Point\cite{Lambotte2020} proved the NIP for this structure under the same assumption, and Lambotte \cite{Lambotte2021} later extended this result to the general case without the congruence periodicity assumption.
More recently, Tong \cite{Tong2025} demonstrated the distality of $(\mathbb{Z};<,+,R)$ assuming congruence periodicity.

In this chapter, we establishes several results that allow us to apply \autoref{thm:DistalityCriterion} to prove the distality of $(\mathbb{Z};<,+,R)$ without the assumption of congruence periodicity, thereby strengthening previous findings and answering a question posed by Tong (ibid., Question 1.4).

The chapter is organized as follows.
After examining the basic properties of almost sparse sequences in \autoref{sec:ZRIntro}, we conduct a back-and-forth argument in \autoref{sec:ZRBaF} to establish two propositions corresponding to conditions (2) and (3) of \autoref{thm:DistalityCriterion}.
The approach in \autoref{sec:ZRBaF} is inspired by Point's work on quantifier elimination \cite[\S3]{Point2000}.
Our propositions may be viewed as a weaker form of her result that does not require the sequence to be congruence-periodic.
\autoref{sec:ZRTerm} is devoted to variable separation.
While a similar argument appeared in the work by Lambotte and Point \cite[Claim 2.33]{Lambotte2020}, we provide a version specifically suited for proving distality.
Finally, in \autoref{sec:ZRDistal}, we conclude that $(\mathbb{Z};<,+,R)$ is distal whenever $R$ is almost sparse.

Our analysis of $(\mathbb{Z};<,+,R)$ is also significant for the expansion of the $p$-adic field $(\mathbb{Q}_p;+,\cdot,p^\mathbb{Z},p^R)$, which we discuss later.
Indeed, $(\mathbb{Q}_p;+,\cdot,p^\mathbb{Z},p^R)$ interprets the structure $\left(p^\mathbb{Z};<_v,\cdot,p^R\right)$, which is isomorphic to $(\mathbb{Z};<,+,R)$.

\subsection{Basic properties of almost sparse sequences}
\label{sec:ZRIntro}

We investigate the key properties of almost sparse sequences necessary for our distality arguments.
Let $R = (r_n)_{n\in \mathbb{N}}$ be a strictly increasing sequence of integers.
We define the successor function $S(r_n) = r_{n+1}$ and its inverse $S^{-1}(r_{n+1}) = r_n$ (with $S^{-1}(r_0) = r_0)$.

\begin{dfn}\label{dfn:Operators}
    An \emph{operator} on $R$ is a term of the form $A(x) = \sum_{i=-n}^m z_i S^i(x)$, where $z_i \in \mathbb{Z}$.
    For operators $A(x)$ and $B(x)$, we write $A >_{ae} B$ if $A(a) > B(a)$ for all but finitely many $a \in R$.
    The relations $A <_{ae} B$ and $A =_{ae} B$ are defined similarly.
\end{dfn}

\begin{dfn}\label{dfn:AlmostSparseSequence}
    $R = (r_n)_{n\in\mathbb{N}}$ is \emph{almost sparse} if the following conditions hold:
    \begin{enumerate}
        \item For any operators $A(x)$ and $B(x)$, one of $A >_{ae} B$, $A <_{ae} B$, or $A =_{ae} B$ holds.
        \item For any operators $A(x)$ and $B(x)$ such that $A >_{ae} B$, there exists an integer $\Delta \geq 0$ such that $A(S^\Delta(x)) >_{ae} B(S^\Delta(x)) +  x$.
    \end{enumerate}
\end{dfn}

\begin{rmk}\label{rmk:BasicsofAlmostSparseSequence}
    It is clear from the definition that $A(x) >_{ae} B(x) \Leftrightarrow A(S^z(x)) >_{ae} B(S^z(x))$ for any operators $A(x)$ and $B(x)$ and any $z \in \mathbb{Z}$, with similar equivalence for $<_{ae}$ and $=_{ae}$.
\end{rmk}

\begin{ex}
    The sequences $(2^n)_{n\in \mathbb{N}}$ and $(n!)_{n\in \mathbb{N}}$ are easily seen to be almost sparse.
    Other examples include the Fibonacci sequence.
    In constrast, $(n)_{n\in\mathbb{N}}$ and $(2^n + n)_{n\in\mathbb{N}}$ are not almost sparse.
    For the details, see \cite[\textsection 2]{Point2000}.
\end{ex}

Almost sparseness can also be characterized in another way.
Point and Lambotte \cite{Lambotte2020} introduced the notion of regular sequence.
$(r_n)_{n\in \mathbb{N}}$ is \emph{almost regular} if $\lim_{n\to \infty} \frac{r_{n+1}}{r_n} = \theta$ for some real number $\theta > 1$ or $\theta = \infty$, and additionally if $\theta$ is an algebraic number with minimal polynomial $\sum_{i=0}^d a_i x^i$ over $\mathbb{Q}$, then $\sum_{i=0}^d a_i S^i(x) =_{ae} 0$.
It has been shown in \cite[Lemma 2.26]{Lambotte2020} and \cite[Theorem 2.27]{Tong2025} that almost sparseness and almost regularity are equivalent.

We henceforth assume that $R$ is almost sparse in the rest of this paper.

\begin{lem}\label{lem:OperatorBounded}
    If an operator $A(x)$ satisfies $A >_{ae} 0$, then there exists $m \in \mathbb{Z}$ such that
    either $A =_{ae} S^m(x)$ or $S^m(x) <_{ae} A <_{ae} < S^{m+1}(x)$.
\end{lem}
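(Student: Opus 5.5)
The plan is to compare $A$ with the operators $S^m(x)$ for every $m \in \mathbb{Z}$, using the trichotomy in \autoref{dfn:AlmostSparseSequence}~(1). For each $m$ exactly one of $A >_{ae} S^m$, $A =_{ae} S^m$, $A <_{ae} S^m$ holds. Since $R$ is strictly increasing, $S^m(x) < S^{m+1}(x)$ for all but finitely many $x \in R$. Hence the set $M = \{ m \in \mathbb{Z} \mid S^m <_{ae} A \}$ is downward closed: if $m' < m$ and $S^m <_{ae} A$, then $S^{m'} \le_{ae} S^m <_{ae} A$. Likewise, $\{ m \mid A <_{ae} S^m\}$ is upward closed, and $A =_{ae} S^m$ holds for at most one $m$. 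So once we know that $M$ is nonempty and bounded above, we can take $m = \max M$. Then $S^{m+1}$ is either $=_{ae} A$, which is the first alternative with index $m+1$, or $>_{ae} A$, which gives $S^m <_{ae} A <_{ae} S^{m+1}$.

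Upper bound. Write $A(x) = \sum_{i=-n}^{m_0} z_i S^i(x)$. Since $R$ is strictly increasing and the $r_j$ are eventually positive (they are integers tending to $+\infty$), we have $|A(x)| \le C\, S^{m_0}(x)$ for almost all $x$, where $C = \sum |z_i|$. I would then show $S^{m_0+k}(x) >_{ae} C\,S^{m_0}(x)$ for some $k$. Apply \autoref{dfn:AlmostSparseSequence}~(2) to $S >_{ae} \mathrm{id}$ to get $\Delta$ with $S^{\Delta+1}(x) >_{ae} S^{\Delta}(x) + x$. Shifting via \autoref{rmk:BasicsofAlmostSparseSequence} and iterating shows that each further block of $\Delta+1$ shifts adds at least one more copy of $S^{j}(x)$. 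Alternatively, apply (2) directly to $S^{m_0+1} >_{ae} S^{m_0}$ repeatedly; each application adds a term of size $x$ at a shifted argument. Hence $S^{m_0+k}$ eventually dominates $C\,S^{m_0}$, and so $A <_{ae} S^{m_0+k}$, which means $M$ is bounded above.

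Nonemptiness is where I expect the main obstacle. We need some $m$ with $S^m <_{ae} A$ or $S^m =_{ae} A$. If $A =_{ae} S^m$ for some $m$, we are done, so assume that never happens. Then for each $m$ either $A <_{ae} S^m$ or $A >_{ae} S^m$, and we must rule out $A <_{ae} S^m$ for all $m$. The difficulty is that $S^{-1}$ on $R$ is only a shift of index by one, so $S^{-m}(x)$ does not tend to $0$. Instead I would use $A >_{ae} 0$ together with (2): there is $\Delta$ with $A(S^\Delta(x)) >_{ae} x$. By \autoref{rmk:BasicsofAlmostSparseSequence}, shifting by $-\Delta$ gives $A(x) >_{ae} S^{-\Delta}(x)$. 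Thus $-\Delta \in M$, so $M$ is nonempty, and combining this with the upper bound and the closure properties completes the argument. The step to check carefully is that the shift in \autoref{rmk:BasicsofAlmostSparseSequence} correctly handles the boundary convention $S^{-1}(r_0) = r_0$. This should be harmless because only finitely many exceptions are involved.
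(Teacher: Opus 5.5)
Your proposal is correct and follows essentially the same route as the paper. You get an upper bound $A <_{ae} S^{k}$ by iterating $S^{\Delta+1}(x) >_{ae} S^{\Delta}(x) + x$, a lower bound $A >_{ae} S^{-\Delta}$ from condition (2) applied to $A >_{ae} 0$ followed by a shift, and then pick the extremal $m$ using the trichotomy. The only cosmetic difference is that you bound $|A|$ by $C\,S^{m_0}$ directly, whereas the paper first bounds each summand $z_i S^i$ by some $S^k$ and then $(m+n+1)S^k$ by some $S^l$.
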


\begin{proof}
    \setcounter{clm}{0}
    \begin{clm}
        For each $n \geq 1$, there exists $k \geq 1$ such that $nx <_{ae} S^k(x)$.
    \end{clm}

    \begin{prooc}
        Since $S(x) >_{ae} x$, by almost sparseness, there exists $\Delta \geq 0$ such that $S^{\Delta+1}(x) >_{ae} S^\Delta(x) + x$.
        Thus, $2x \leq x + S^\Delta(x) <_{ae} S^{\Delta+1}(x)$.
        By iteration, it follows that $4x <_{ae} 2S^{\Delta+1}(x) <_{ae} S^{2\Delta+2}(x)$, $8x <_{ae} 2S^{2\Delta+2}(x) <_{ae} S^{3\Delta+3}(x)$, and so on, proving the claim.
    \end{prooc}
    
    Now, we consider an operator $A(x) = \sum_{i=-n}^m z_i S^i(x)$.
    By Claim 1, there exists $k \geq 1$ such that $z_i S^i(x) <_{ae} S^k(x)$ for all $-n \leq i \leq m$.
    We can also find $l \geq 1$ such that $(m+n+1)S^k(x) <_{ae} S^l(x)$.
    These imply $A <_{ae} S^l(x)$.
    Conversely, since $A >_{ae} 0$, there exists $\Delta \geq 0$ such that $A(S^\Delta(x)) >_{ae} x$, or equivalently $A >_{ae} S^{-\Delta}(x)$.
    Having established $S^{-\Delta}(x) <_{ae} A <_{ae} S^l(x)$, the existence of the desired $m$ follows by taking the maximal integer such that $S^m(x) \leq_{ae} A$.
\end{proof}

We consider the structure $(\mathbb{Z};<,+,R)$, where $R$ is regarded as a subset $\left\{ r_n \mid n \in \mathbb{N} \right\}$ of $\mathbb{Z}$.

\begin{dfn}\label{dfn:LambdaRSS-1}
    We define the functions $\lambda$, $S$, and $S^{-1}$ on $\mathbb{Z}$ on $\mathbb{Z}$ as follows:
    \begin{itemize}
        \item $\lambda(x) = \mathrm{max}\left\{ r \in R \mid r \leq x \right\}$ if $x \geq r_0$ and $\lambda(x) = r_0$ otherwise.
        \item $S(x) = S(\lambda(x))$ and $S^{-1}(x) = S^{-1}(\lambda(x))$.
    \end{itemize}
\end{dfn}

Note that $\mathrm{Im}(\lambda) = R$, and that these functions are $\emptyset$-definable in $(\mathbb{Z};<,+,R)$.
For convenience, We regard $S^0(x)$ as $\lambda(x)$, so that $A(x) = A(\lambda(x))$ for any operator $A$ and $x \in \mathbb{Z}$.

Let $\mathcal{U}$ be a monster model of $(\mathbb{Z};<,+,R)$.
Following usual notation, we denote by $R^\mathcal{U}$ the interpretation of the symbol $R$ in $\mathcal{U}$.
We refer to $\mathbb{Z} \subseteq U$ as the standard part of $\mathcal{U}$.
If $x > 0$ is not in standard part, then $A >_{ae} B \iff A(x) > B(x)$ for any operators $A$ and $B$, with similar properties holding for $<_{ae}$ and $=_{ae}$.

\begin{dfn}
    For positive elements $a, b \in U$, we define the following relations:
    \begin{itemize}
        \item $a \approx b$ if either ($a,b \in \mathbb{Z}$), or ($a,b \notin \mathbb{Z}$ and there exists a natural number $m$ such that $S^{-m}(a) < b < S^m(a)$).
        \item $a \ll b$ if $a < b$ and $a \not\approx b$, which is equivalent to that for all $m \in \mathbb{N}$, $S^m(a) < S^{-m}(b)$.
    \end{itemize}
\end{dfn}

It is clear that $\approx$ is an equivalence relation.

\begin{lem}\label{lem:ApproxAndLL}
    \begin{enumerate}
        \item For any positive element $a \in U \setminus \mathbb{Z}$ and any operator $A >_{ae} 0$, $a \approx A(a)$.
        \item For any positive elements $a,b \in U$ such that $a \ll b$, and for any operators $A <_{ae} C <_{ae} B$, we have $A(b) < C(b) \pm a < B(b)$. In particular, $b \pm a \approx b$.
    \end{enumerate}
\end{lem}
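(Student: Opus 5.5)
For (1), I will use \autoref{lem:OperatorBounded} and then transfer to the nonstandard element $a$. Since $A >_{ae} 0$, the lemma gives some $m \in \mathbb{Z}$ with either $A =_{ae} S^m(x)$ or $S^m(x) <_{ae} A <_{ae} S^{m+1}(x)$. Because $a > 0$ is not in the standard part, the remark preceding the definition of $\approx$ converts these eventual inequalities into actual ones: either $A(a) = S^m(a)$, or $S^m(a) < A(a) < S^{m+1}(a)$.

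With $M = |m|+1$, this yields $S^{-M}(a) < A(a) < S^M(a)$. Here I use that $S^{-M}(a) < S^m(a)$ and $S^{m+1}(a) \le S^M(a)$, both obtained from the same transfer applied to $S^i(x) <_{ae} S^j(x)$ for $i<j$. I also need $A(a) \notin \mathbb{Z}$ for $A(a)\approx a$ to make sense. This holds because $A(a) > S^{-M}(a)$ and $S^{-M}(a)$ is nonstandard: $\lambda(a)$ is a nonstandard element of $R^{\mathcal U}$, and each application of $S^{-1}$ to a nonstandard element of $R$ stays nonstandard, since the standard elements of $R^{\mathcal U}$ form an initial segment closed under $S$.

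For (2), let $a \ll b$ and $A <_{ae} C <_{ae} B$. The key tool is condition (2) of \autoref{dfn:AlmostSparseSequence}. Applied to $C >_{ae} A$, it gives $\Delta \ge 0$ with $C(S^\Delta(x)) >_{ae} A(S^\Delta(x)) + x$. By \autoref{rmk:BasicsofAlmostSparseSequence}, shifting $x \mapsto S^{-\Delta}(x)$ turns this into $C(x) >_{ae} A(x) + S^{-\Delta}(x)$. Symmetrically, there is $\Delta'$ with $B(x) >_{ae} C(x) + S^{-\Delta'}(x)$.

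Next I evaluate at $b$, which is nonstandard because $a \ll b$ forces $b \notin \mathbb{Z}$. This gives $C(b) - A(b) > S^{-\Delta}(b)$ and $B(b) - C(b) > S^{-\Delta'}(b)$. Since $a \ll b$, we have $S^{m}(a) < S^{-m}(b)$ for every $m$. Taking $m = \max(\Delta,\Delta')$ gives $a \le S^0(a)$-ish bounds: specifically $a < S(a) \le S^m(a) < S^{-m}(b) \le S^{-\Delta}(b)$ when $a$ is nonstandard, using $a < S(\lambda(a))$. When $a$ is standard, $a < S^{-\Delta}(b)$ holds directly, since the latter is nonstandard positive. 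Either way $A(b) < C(b) \pm a < B(b)$.

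For the ``in particular'', I take $C = S^0$, $A = S^{-1}$, $B = S^1$, using $S^{-1} <_{ae} S^0 <_{ae} S^1$. Then $S^{-1}(b) < b' \pm a < S(b)$ with $b'=\lambda(b)$. Combined with $\lambda(b) \le b < S(b)$ and $a$ being small, this gives $S^{-1}(b) < b \pm a < S^{2}(b)$, so $b\pm a \approx b$; non-standardness of $b\pm a$ follows as in (1).

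The main obstacle is the careful bookkeeping between $x$ and $\lambda(x)$, that is, $S^0(b)=\lambda(b)$ versus $b$ itself, together with handling the case where $a$ is standard. The strict inequalities need slight slack, which is obtained by widening the shift index by one.
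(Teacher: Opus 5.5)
Your proposal is correct and follows the paper's proof. Part (1) comes from \autoref{lem:OperatorBounded} transferred to the nonstandard $a$. Part (2) applies condition (2) of almost sparseness, shifted to $C(x) - A(x) >_{ae} S^{-\Delta}(x)$ and $B(x) - C(x) >_{ae} S^{-\Delta'}(x)$, then evaluates at $b$ and uses $a \ll b$ to get $S^{-\Delta}(b) > a$.

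Your extra bookkeeping goes beyond the paper's terse argument: nonstandardness of $A(a)$ and $b \pm a$, the standard-$a$ case, and the ``in particular'' clause. Two small tightenings. The bound $b + a < S^2(b)$ should come from one more application of the main inequality with $C = S$ and $B = S^2$, rather than from ``$a$ being small''. Also take $m = \max(\Delta,\Delta',1)$ so that $S(a) \leq S^m(a)$ actually holds.
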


\begin{proof}
    The first claim follows directly from \autoref{lem:OperatorBounded}.
    For the second, note that $b \notin \mathbb{Z}$.
    Since $B - C >_{ae} 0$, by almost sparseness, there exists $\Delta \geq 0$ such that $B - C >_{ae} S^{-\Delta}(x)$.
    It follows that $B(b) - C(b) > S^{-\Delta}(b) > a$.
    A similar argument shows $C(b) - A(b) > a$.
\end{proof}

\begin{lem}\label{lem:ElementInROutsideOfM}
    Let $M \subseteq U$ be a subgroup containing $\mathbb{Z}$ that is closed under $\lambda$, $S$, and $S^{-1}$.
    For $a \in M$, $b \in R^\mathcal{U} \setminus M$, and an operator $A$, the following hold:
    \begin{enumerate}
        \item Either $|a| \ll b$ or $b \ll |a|$.
        \item Either $|A(b) + a| \approx b$ or $|A(b) + a| \approx |a|$.
        \item $A(b) + a > 0$ if and only if ($|a| \ll b$ and $A >_{ae} 0$), ($b \ll a$), or ($A =_{ae} 0$ and $a > 0$).
        \item The set $\widetilde{M} = \left\{ A(b) + a \mid \text{$A$ is an operator, } a \in M \right\}$ is a subgroup closed under $\lambda$, $S$, and $S^{-1}$.
        \item If $A(b) + a \in R^\mathcal{U}$, then it is either in $M \cap R^\mathcal{U}$ or equals $S^z(b)$ for some $z \in \mathbb{Z}$.
    \end{enumerate}
\end{lem}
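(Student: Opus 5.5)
We prove the five items in order, each resting on the previous ones, working in $\mathcal{U}$ and using \autoref{lem:ApproxAndLL}, \autoref{lem:OperatorBounded}, and the fact that for nonstandard positive $x$ the relations $>_{ae},<_{ae},=_{ae}$ between operators are decided by evaluation at $x$.

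\emph{Item (1).} Since $b\in R^\mathcal{U}\setminus M$ and $\mathbb{Z}\subseteq M$, $b$ is nonstandard and positive. If $a$ is standard then $|a|\ll b$ trivially. Otherwise $|a|\in M$ (as $M$ is a group), so $c=\lambda(|a|)\in M\cap R^\mathcal{U}$ and $S^m(c)\in M$ for all $m\in\mathbb{Z}$. If $|a|\approx b$, then $S^{-m}(b)<|a|<S^m(b)$ for some $m$. Applying the monotone map $\lambda$ and using that $S$ is the successor on $R^\mathcal{U}$ (an elementary property transferred from $\mathbb{Z}$), we get $b=S^j(c)$ for some $|j|\le m+1$. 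Hence $b\in M$, a contradiction.

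\emph{Items (2) and (3).} If $A=_{ae}0$ then $A(b)=0$ and both items are trivial. Otherwise, say $A>_{ae}0$ (the other case follows by replacing $A,a$ with their negatives), and use \autoref{lem:ApproxAndLL}(1) to get $A(b)\approx b$.
\begin{itemize}
\item If $|a|\ll b$, then \autoref{lem:ApproxAndLL}(2), applied with operators bracketing $A$ (for instance from \autoref{lem:OperatorBounded}, or $A/2$-type bounds obtained from almost sparseness), gives $A(b)+a\approx b$ and $A(b)+a>0$.
\item If $b\ll|a|$, then since $A(b)<S^k(b)\ll|a|$ for suitable $k$, the same lemma with the roles swapped gives $|A(b)+a|\approx|a|$, and the sign is that of $a$. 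Thus $A(b)+a>0$ iff $b\ll a$.
\end{itemize}
Collecting the sign cases yields (3).

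\emph{Item (4).} Closure of $\widetilde M$ under $+$ and $-$ is immediate. For $\lambda$, take $x=A(b)+a>0$ (for $x\le0$, $\lambda(x)=r_0\in M$). If $|a|\ll b$ with $A>_{ae}0$, \autoref{lem:OperatorBounded} gives either $A=_{ae}S^m$ or $S^m<_{ae}A<_{ae}S^{m+1}$. Then \autoref{lem:ApproxAndLL}(2) places $x$ in $[S^m(b),S^{m+1}(b))$, except in the boundary case $A=_{ae}S^m$ with $a<0$, where $x$ lies just below $S^m(b)$; in every case $\lambda(x)=S^{m'}(b)\in\widetilde M$. If $b\ll a$, then $x\approx a$, and we compare $x$ with the $R$-points $\lambda(a)$ and $S(\lambda(a))$ of $M$. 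Since $|A(b)|\ll a$, we expect $\lambda(x)\in\{S^{-1}\lambda(a),\lambda(a)\}\subseteq M$; this uses $a-\lambda(a)$ versus $|A(b)|$, so we split on whether $a-\lambda(a)\in M$ is $\ll b$ or $\gg b$ by item (1). The subcase $a-\lambda(a)\ll b$ with $A(b)<0$ gives $\lambda(x)=S^{-1}(\lambda(a))$. Closure under $S^{\pm1}$ then follows from $S(x)=S(\lambda(x))$.

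\emph{Item (5).} If $x=A(b)+a\in R^\mathcal{U}$, then $x=\lambda(x)$, and the computation in (4) shows $\lambda(x)$ is either some $S^{m}(b)$ or lies in $M$. This gives (5) directly.

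The main obstacle is the case analysis in (4) for $\lambda(A(b)+a)$ when $b\ll a$. There the difference $a-\lambda(a)$ may be comparable to $b$ only through the dichotomy of item (1), and one must verify carefully that no element of $R^\mathcal{U}$ other than $\lambda(a)$ and its predecessor can fall between $x$ and $a$.
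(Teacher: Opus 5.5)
Your items (1)--(3) and (5) follow the paper's proof, with more detail than the paper gives; for example, you bracket $A$ explicitly via Lemma~\ref{lem:OperatorBounded}, which the paper leaves implicit. The problem is the case $b\ll a$ of item (4).

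\textbf{The error.} Your expected conclusion $\lambda(x)\in\{S^{-1}(\lambda(a)),\lambda(a)\}$ is false. So is the claim that only $\lambda(a)$ and its predecessor can lie between $a$ and $x$: when $A>_{ae}0$, the point $S(\lambda(a))$ can be crossed. For instance:
\begin{itemize}
\item take $c\in M\cap R^\mathcal{U}$ with $b\ll c$;
\item put $a=S(c)-1$, so that $\lambda(a)=c$ and $b\ll a$;
\item put $A=S^0$.
\end{itemize}
Then $x=b+S(c)-1$ satisfies $S(c)<x<S^2(c)$, where the upper bound comes from Lemma~\ref{lem:ApproxAndLL}(2). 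Hence $\lambda(x)=S(\lambda(a))$. The statement of (4) still holds, because $S(\lambda(a))\in M$ as well. But your split on $a-\lambda(a)$ is incomplete: you would also have to compare $S(\lambda(a))-a\in M$ with $b$. As written, this case, which you yourself flag as the main obstacle, is left unproved and is based on a wrong expectation.

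\textbf{The fix.} The fine case analysis is unnecessary. You already note that $x\approx a$ when $b\ll a$. So $S^{-m}(a)<x<S^m(a)$ for some $m$. The monotonicity argument you used in (1) then gives $\lambda(x)=S^j(\lambda(a))$ for some $|j|\le m$, and this lies in $M$ because $M$ is closed under $\lambda,S,S^{-1}$.

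This is exactly the paper's proof of (4). For $x=A(b)+a>0$, item (2) gives $x\approx b$ or $x\approx|a|$. Hence either $\lambda(x)=S^z(b)$ for some $z$, or $\lambda(x)\in M$. There is no need to identify $z$, and no need for Lemma~\ref{lem:OperatorBounded} in the case $|a|\ll b$ either. The same dichotomy yields (5) immediately.

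One minor point: for $x\le 0$ you only get $\lambda(x)\in\mathbb{Z}\subseteq M$, not $\lambda(x)=r_0$ in general.
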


\begin{proof}
    If $|a| \approx b$, then $b$ would be $S^z(|a|) \in M$ for some $z \in \mathbb{Z}$, which is a contradiction; thus (1) holds.
    Claim (2) follows from \autoref{lem:ApproxAndLL} if $A \neq_{ae} 0$, and is trivial otherwise.
    Claim (3) is a direct consequence of (1) and \autoref{lem:ApproxAndLL}.
    For (4), $\widetilde{M}$ is clearly a subgroup.
    If $A(b) + a \leq 0$, then $\lambda(A(b) + a) \in \mathbb{Z} \subseteq M$.
    Otherwise, by applying (2), one can see that $\lambda(A(b) + a)$ is either in $M$ or equals $S^z(b)$ for some $z \in \mathbb{Z}$.
    These imply (4) for $\lambda$ and (5).
    Claim (4) for $S$ and $S^{-1}$ follow since $S = S \circ \lambda$ and $S^{-1} = S^{-1} \circ \lambda$.
\end{proof}
\subsection{Back-and-forth}
\label{sec:ZRBaF}

Quantifier elimination for the ordered additive group of integers $(\mathbb{Z};<,+)$ is a classical result.

\begin{thm}\label{thm:QEofPA}
    The structure $(\mathbb{Z};<,+)$ admits quantifier elimination in the language $\left\{ 0,1,+,-,<,(\equiv_n)_{n\geq 2} \right\}$, where $\equiv_n$ denotes the congruence relation modulo $n$.
\end{thm}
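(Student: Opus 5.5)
This is Presburger's theorem, and I would prove it by the standard one-variable elimination. Write $L$ for the language $\{0,1,+,-,<,(\equiv_n)_{n\ge 2}\}$. The congruences are definable in $(\mathbb{Z};<,+)$, since $x\equiv_n y$ iff $\exists z\,(x-y = z+\dots+z)$ with $n$ summands. The constant $1$ is definable as the least positive element. Hence it suffices to show that every $L$-formula of the form $\exists y\,\bigwedge_i \theta_i(\overline{x},y)$, with each $\theta_i$ a literal, is equivalent in $\mathbb{Z}$ to a quantifier-free $L$-formula. Induction on formulas then gives the result.

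The first step is normalisation. Negated literals become positive ones: $\neg(s<t)$ is $t<s+1 \vee t=s$, and $\neg(s\equiv_n t)$ is $\bigvee_{0<j<n} s\equiv_n t+j$. Distributing $\exists$ over the resulting disjunction, every literal becomes $ky\,\square\, t(\overline{x})$, where $\square\in\{=,<,>,\equiv_n\}$, $k\ge 0$, and $t$ is a term without $y$. Let $N$ be the least common multiple of all coefficients $k$ of $y$. Multiply each atom through so that $y$ appears as $Ny$, using $ky<t \iff (N/k)ky < (N/k)t$ and $ky\equiv_n t \iff (N/k)ky\equiv_{(N/k)n}(N/k)t$. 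Then replace $Ny$ by a new variable $y'$ and add the conjunct $y'\equiv_N 0$. Now $y'$ has coefficient $1$ everywhere. If some conjunct is an equation $y'=t$, substitute $t$ for $y'$ and the quantifier disappears.

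Otherwise the matrix has the form
\[
\bigwedge_i t_i<y \wedge \bigwedge_j y<s_j \wedge \bigwedge_l y\equiv_{n_l} u_l .
\]
Let $D$ be the lcm of the $n_l$. The congruence part depends only on $y$ mod $D$. So if there is a witness $y$, there is one in each interval $(t_i, t_i+D]$, taking $t_i$ to be the maximal lower bound. This gives the equivalence of the existential formula with
\[
\bigvee_i\bigvee_{d=1}^{D}\Big(\bigwedge_{i'} t_{i'}<t_i+d \wedge \bigwedge_j t_i+d<s_j \wedge \bigwedge_l t_i+d\equiv_{n_l}u_l\Big).
\]
If there are no lower bounds, the upper bounds are handled symmetrically. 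If there are no bounds at all, the formula is equivalent to $\bigvee_{d=1}^{D}\bigwedge_l d\equiv_{n_l}u_l$, since every residue class is realised.

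The step needing care is this choice of witness. I must check that $t_i+d$ still lies strictly below every upper bound whenever some witness exists. This holds because, when a witness $y_0$ exists, the candidate $t_i+d$ with $t_i+d\equiv_D y_0$ satisfies $t_i<t_i+d\le y_0$. The rest is routine bookkeeping with the literals.
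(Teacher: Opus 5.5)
Your proof is correct. It is the classical syntactic (Presburger/Cooper-style) elimination of a single existential quantifier. The one delicate step is handled properly: you take $t_{i_0}$ to be the largest lower bound and choose $1\le d\le D$ with $t_{i_0}+d\equiv_D y_0$, which gives $t_{i_0}<t_{i_0}+d\le y_0$, so the candidate witness stays below every upper bound and satisfies all congruences.

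The paper gives no proof of its own and simply refers to Marker's textbook. As I recall it, the result there is obtained semantically, by the model-theoretic quantifier-elimination test: one shows that a quantifier-free condition over a common substructure of two models of the theory of $\mathbb{Z}$-groups, realised in one model, is realised in the other.

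The two routes buy different things:
\begin{itemize}
\item Your route is self-contained and effective, so it yields a decision procedure. It also only requires the equivalences to hold in the standard model $\mathbb{Z}$, which is exactly what the statement asks.
\item The semantic route gives, as a by-product, the axiomatization of $\mathrm{Th}(\mathbb{Z};<,+)$ by the $\mathbb{Z}$-group axioms. It is also closer in spirit to how the paper uses the theorem: to conclude that a map preserving $0,1,+,-,<,\equiv_n$ is partial elementary.
\end{itemize}

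A few small bookkeeping points should be added to make the normalisation complete; none of them affects the argument:
\begin{itemize}
\item A negated equality $\neg(s=t)$ must be rewritten as $s<t\vee t<s$.
\item Literals whose $y$-coefficient is $k=0$ should be moved outside the quantifier before you form the lcm $N$; otherwise $N=0$.
\item When $N=1$, the conjunct $y'\equiv_N 0$ should simply be dropped, since $\equiv_1$ is not in the language.
\item The disjunct $t=s$ in your rewriting of $\neg(s<t)$ is redundant, because $t<s+1$ already covers it.
\end{itemize}
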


A proof can be found in, for example, Marker's textbook \cite[Section 3.1]{Marker2002}.
We utilize this result in the proof of \autoref{lem:ExtendabilityforR}.
Our goal is to establish a back-and-forth system for our structure $(\mathbb{Z};<,+,R)$ to identify partial elementary embeddings.

\begin{dfn}\label{dfn:BackandForthSystem}
    Let $\mathcal{M}$ and $\mathcal{N}$ be $L$-structures.
    A \emph{back-and-forth system} between $\mathcal{M}$ and $\mathcal{N}$ is a nonempty set $\mathcal{I}$ of partial $L$-embeddings from $\mathcal{M}$ to $\mathcal{N}$ satisfying the following conditions:
    \begin{enumerate}
        \item Forth: For any $a \in M$ and $f \in \mathcal{I}$, there exists $g \in \mathcal{I}$ that expands $f$ and includes $a$ in its domain.
        \item Back: For any $b \in N$ and $f \in \mathcal{I}$, there exists $g \in \mathcal{I}$ that expands $f$ and includes $b$ in its image.
    \end{enumerate}
\end{dfn}

\begin{thm}\label{thm:BackandForthSystem}
    If $\mathcal{I}$ is a back-and-forth system, then every $f \in \mathcal{I}$ is a partial $L$-elementary embedding.
\end{thm}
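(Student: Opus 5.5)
The plan is the classical argument: prove by induction on the complexity of formulas that for every $f \in \mathcal{I}$, every $L$-formula $\varphi(x_1,\dots,x_n)$ and every tuple $\overline{a} = (a_1,\dots,a_n)$ from $\mathrm{dom}(f)$,
\begin{equation*}
    \mathcal{M} \models \varphi(\overline{a}) \iff \mathcal{N} \models \varphi(f(\overline{a})).
\end{equation*}
The induction is carried out simultaneously for all members of $\mathcal{I}$. This is needed because the quantifier step replaces $f$ by an extension $g \in \mathcal{I}$.

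\textbf{Base case.} For atomic formulas the equivalence is exactly the statement that $f$ is a partial $L$-embedding. A partial embedding preserves the relation symbols on its domain. It also preserves the values of terms: by induction on terms, whenever $t(\overline{a})$ is evaluated inside the domain, $f(t(\overline{a})) = t(f(\overline{a}))$. I would fix the convention that the domain of a partial embedding is the domain of a substructure, so that it is closed under the function symbols. If instead one allows arbitrary partial maps that respect the quantifier-free type of their domain, the atomic case holds directly by that hypothesis. Either way the atomic case is immediate.

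\textbf{Connectives.} The cases $\neg$ and $\wedge$ are immediate from the induction hypothesis applied to the same $f$.

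\textbf{Quantifier step.} This is the only step that uses the back-and-forth conditions. Consider $\exists y\,\psi(\overline{x},y)$.
\begin{itemize}
    \item Suppose $\mathcal{M} \models \psi(\overline{a},c)$ for some $c \in M$. By \emph{Forth}, pick $g \in \mathcal{I}$ extending $f$ with $c \in \mathrm{dom}(g)$. The induction hypothesis for $\psi$ applied to $g$ gives $\mathcal{N} \models \psi(g(\overline{a}),g(c))$. Since $g(\overline{a}) = f(\overline{a})$, we get $\mathcal{N} \models \exists y\,\psi(f(\overline{a}),y)$.
    \item Conversely, suppose $\mathcal{N} \models \psi(f(\overline{a}),d)$. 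By \emph{Back}, pick $g \in \mathcal{I}$ extending $f$ with $d = g(c)$ for some $c$. The hypothesis for $g$ yields $\mathcal{M} \models \psi(\overline{a},c)$.
\end{itemize}

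The only point needing care, and the main (mild) obstacle, is that the induction hypothesis must hold for all $g \in \mathcal{I}$ simultaneously, not just for $f$. This is why the statement is proved as "for all $f \in \mathcal{I}$" at each complexity level before moving to the next. With this in place, the equivalence holds for all formulas, so every $f \in \mathcal{I}$ is partial elementary.
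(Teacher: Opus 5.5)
Your proposal is correct and takes the same approach as the paper, which omits the proof and says only that it is a straightforward induction on the complexity of $L$-formulas. Your induction, run simultaneously over all $f \in \mathcal{I}$ with Forth and Back handling the two directions of the existential step, is exactly that argument. Your treatment of the atomic case also fits the paper's applications: there the maps in $\mathcal{I}$ are required to have substructures as domains, so terms evaluated on $\mathrm{Dom}(f)$ stay inside it.
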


The proof of \autoref{thm:BackandForthSystem} proceeds by a straightforward induction on the complexity of $L$-formulas, and we omit it.
Throughtout this paper, we consider back-and-forth systems from a monster model to itself, assuming that every map in the system has small domain (see \autoref{rmk:Monster}).

\begin{ntt}
    For $n \geq 2$ and $0 \leq i < n$, let $\cdot \equiv_n i$ denote the unary relation on $R$ identifying elements $x \in R$ such that $x \equiv i \pmod n$.
\end{ntt}

In our monster model $\mathcal{U}$ of $(\mathbb{Z};<,+,R)$, $(R^\mathcal{U};<,(\cdot \equiv_n i)_{n,i})$---consisting of $R^\mathcal{U}$ with the induced order and residue predicates---plays a pivotal role in our back-and-forth system.

\begin{dfn}\label{dfn:BackandForthSystemforZ}
    Let $L_R = \left\{ 0,1,<,+,-,R,\lambda,S,S^{-1} \right\}$.
    We define $\mathcal{I}$ as the set of partial functions $f:\mathcal{U} \to \mathcal{U}$ satisfying the following:
    \begin{enumerate}
        \item $\mathrm{Dom}(f)$ is an $L_R$-substructure.
        \item $f$ is a partial $L_R$-embedding.
        \item $f$ is a partial elementary embedding of $(U;<,+)$.
        \item $f|_{\mathrm{Dom}(f)\cap R^\mathcal{U}}$ is a partial elementary embedding of
        $(R^\mathcal{U};<,(\cdot \equiv_n i)_{n,i})$.
    \end{enumerate}

    Additionally, we consider the condition:

    \begin{enumerate}
        \item[2'.] $a \in R^{\mathcal{U}} \Rightarrow f(a) \in R^{\mathcal{U}}$ for any $a \in \mathrm{Dom}(f)$.
    \end{enumerate}
\end{dfn}

\begin{lem}\label{lem:ConditionstobeinI}
    Suppose a partial function $f:\mathcal{U} \to \mathcal{U}$ satisfies conditions (1), (2'), (3), and (4) of\autoref{dfn:BackandForthSystemforZ}.
    Then $f$ is a partial $L_R$ embedding and thus $f \in \mathcal{I}$.
\end{lem}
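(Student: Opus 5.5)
We must show that $f$ preserves every symbol of $L_R$ in both directions. Since $\mathrm{Dom}(f)$ is an $L_R$-substructure by (1), it is closed under $+,-,\lambda,S,S^{-1}$ and contains $0,1$. Condition (3) already gives that $f$ preserves $0,1,+,-,<$, and in particular that $f$ is injective.

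\textbf{The predicate $R$.} By (2'), $a \in R^\mathcal{U}$ implies $f(a) \in R^\mathcal{U}$. For the converse, suppose $a \in \mathrm{Dom}(f)$ and $f(a) \in R^\mathcal{U}$. Then $\lambda(a) \in \mathrm{Dom}(f)\cap R^\mathcal{U}$ and $\lambda(a) \le a$. If $\lambda(a) = a$ we are done, so assume $\lambda(a) < a$.
\begin{itemize}
\item If $a \ge r_0$, then $\lambda(a) < a < S(\lambda(a))$. Here $S(\lambda(a)) = S(a)$ lies in the domain and in $R^\mathcal{U}$.
\item If $a < r_0$, then $\lambda(a) = r_0 > a$, which contradicts $\lambda(a) < a$.
\end{itemize}
In the first case, applying (2') and the order preservation from (3) gives $f(\lambda(a)) < f(a) < f(S(a))$, with both endpoints in $R^\mathcal{U}$. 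By (4), $f$ restricted to $R^\mathcal{U}$ is elementary for $<$ on $R^\mathcal{U}$. The pair $\lambda(a), S(a)$ is an immediate successor pair in $R^\mathcal{U}$ (a first-order property in $(R^\mathcal{U};<)$), so $f(\lambda(a)), f(S(a))$ is also a successor pair in $R^\mathcal{U}$. This contradicts $f(a) \in R^\mathcal{U}$ lying strictly between them.

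\textbf{The functions $S$ and $S^{-1}$ on $R$.} Let $r \in \mathrm{Dom}(f) \cap R^\mathcal{U}$. Then $S(r)$ is the successor of $r$ in $(R^\mathcal{U};<)$; if $r = r_0$ (the minimum), then $S^{-1}(r) = r$, and otherwise $S^{-1}(r)$ is the predecessor. These relationships are first-order in $(R^\mathcal{U};<)$: ``$y$ is the successor of $x$'', ``$x$ is the minimum''. By (4) and (2') they transfer, so $f(S(r)) = S(f(r))$ and $f(S^{-1}(r)) = S^{-1}(f(r))$.

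\textbf{The function $\lambda$.} Let $a \in \mathrm{Dom}(f)$.
\begin{itemize}
\item If $a \ge r_0$, then $\lambda(a) \le a < S(\lambda(a))$, with $\lambda(a)$ and $S(\lambda(a))$ consecutive in $R^\mathcal{U}$. By the previous step and (3), $f(\lambda(a)) \le f(a) < S(f(\lambda(a)))$, and $f(\lambda(a)) \in R^\mathcal{U}$. Hence $\lambda(f(a)) = f(\lambda(a))$, provided $f(a) \ge r_0$. This holds because $r_0 \le a$ and $f(r_0) = r_0$: the element $r_0 = \lambda(0)$ lies in the domain and is the minimum of $R^\mathcal{U}$, which (4) preserves. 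Alternatively, $r_0$ is a standard integer fixed by (3).
\item If $a < r_0$, then $f(a) < f(r_0) = r_0$, so $\lambda(f(a)) = r_0 = f(\lambda(a))$.
\end{itemize}

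\textbf{The extended maps $S$ and $S^{-1}$.} These follow from $S = S\circ\lambda$ and $S^{-1} = S^{-1}\circ\lambda$.

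The main point requiring care is the reverse direction of $R$-preservation, which is handled above via the successor structure on $R^\mathcal{U}$. With all symbols preserved in both directions, $f$ is a partial $L_R$-embedding, condition (2) holds, and hence $f\in\mathcal{I}$.
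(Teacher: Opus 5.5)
Your argument is correct and is essentially the paper's. Conditions (1) and (2') put $f(\lambda(a))$ and $f(S(a))$ in $R^\mathcal{U}$, condition (4) makes them consecutive there, and the order-preservation from (3) squeezes $f(a)$ between them, which gives commutation with $\lambda$, $S$ and $S^{-1}$.

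There is one small slip, in your paragraph on the reverse direction for $R$. You assert $\lambda(a)\le a$ before splitting on $a\ge r_0$. As a result, the case $a<r_0$ is dismissed by a spurious contradiction with an inequality that is false in that case. That case should instead be excluded directly: $f(a)<f(r_0)=r_0=\min R^\mathcal{U}$, a fact you prove later. Alternatively, as the paper does, prove the $\lambda$-step first and then read off $f(a)=\lambda(f(a))=f(\lambda(a))$, so that $a=\lambda(a)\in R^\mathcal{U}$ by injectivity.

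Also, your claim $r_0=\lambda(0)$ fails when $r_1\le 0$. This does not matter, because your alternative justification (that $f$ fixes the standard integers) is fine.
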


\begin{proof}
    We must show that $f(a) \in R^{\mathcal{U}} \Rightarrow a \in R^{\mathcal{U}}$ for any $a \in \mathrm{Dom}(f)$, and that $f$ commutes with $\lambda$, $S$, and $S^{-1}$.
    Take $a \in \mathrm{Dom}(f)$ with $a \geq r_0$.
    Then $\lambda(a)$, $S(a) \in \mathrm{Dom}(f)$ by (1), and $f(\lambda(a)) \leq f(a) < f(S(a))$ by (3).
    In the structure $R^\mathcal{U}$, $S(a)$ is the successor of $\lambda(a)$, and this fact can be represented by a sentence.
    By (4), $f(S(a))$ must be the successor of $f(\lambda(a))$ in $R^\mathcal{U}$, which implies that $\lambda(f(a)) = f(\lambda(a))$ and that $S(f(a)) = f(S(a))$.
    A similar argument applies to $S^{-1}$, showing $S^{-1}(f(a)) = f(S^{-1}(a))$.
    Finally, if $f(a) \in R^\mathcal{U}$, then $f(a) = \lambda(f(a)) = f(\lambda(a))$, which implies $a = \lambda(a) \in R^\mathcal{U}$.
\end{proof}

Our objective is to prove that $\mathcal{I}$ forms a back-and-forth system in the language $\left\{ <,+,R \right\}$.
Since $f^{-1} \in \mathcal{I}$ whenever $f \in \mathcal{I}$, it suffices to prove the `forth' direction.
We first consider the case where the added element belongs to $R^\mathcal{U}$.

\begin{lem}\label{lem:ExtendabilityforR}
    Let $f \in \mathcal{I}$ with $\mathrm{Dom}(f)$ denoted by $M$, and take $b \in R^\mathcal{U} \setminus M$.
    Then there exists $c \in R^\mathcal{U}$ such that:
    \begin{equation*}
        \mathrm{tp}^{(R^\mathcal{U};<,(\cdot \equiv_n i))}(b, M \cap R^\mathcal{U}) = \mathrm{tp}^{(R^\mathcal{U};<,(\cdot \equiv_n i))}(c, f(M \cap R^\mathcal{U})).
    \end{equation*}
    Moreover, for any such $c$, there exists an extension $\widetilde{f} \in \mathcal{I}$ of $f$ such that $\widetilde{f}(b) = c$.
\end{lem}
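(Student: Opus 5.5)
The first assertion is a saturation statement. By condition (4) of \autoref{dfn:BackandForthSystemforZ}, $f|_{M\cap R^\mathcal{U}}$ is a partial elementary map of $(R^\mathcal{U};<,(\cdot\equiv_n i))$. This structure is interpretable in $\mathcal{U}$, so it is $\kappa$-saturated by \autoref{rmk:Monster}. Hence the image under $f$ of $\mathrm{tp}(b/M\cap R^\mathcal{U})$ is a consistent type over the small set $f(M\cap R^\mathcal{U})$, and it is realized by some $c\in R^\mathcal{U}$.

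For the extension, fix such a $c$. By \autoref{lem:ElementInROutsideOfM} (4), $\widetilde{M}=\{A(b)+a\mid A \text{ an operator}, a\in M\}$ is an $L_R$-substructure. Define
\[
\widetilde f(A(b)+a)=A(c)+f(a).
\]
To see that $\widetilde f$ is well defined and injective, I will first check that $c\notin f(M)$. Then I will show that $A(b)+a=0$ iff $A(c)+f(a)=0$. By \autoref{lem:ElementInROutsideOfM} (3), applied to $(M,b)$ and to $(f(M),c)$, the sign of $A(b)+a$ is governed by three things: whether $|a|\ll b$ or $b\ll|a|$, the sign of $A$ in the $ae$ sense, and the sign of $a$. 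So I need $|a|\ll b \iff |f(a)|\ll c$ for $a\in M$. Since $M$ is closed under $\lambda$ and $S^{\pm1}$, and $|a|\ll b$ iff $S^m(\lambda(|a|))<b$ for all $m$, this is a statement about the order of $b$ relative to elements of $M\cap R^\mathcal{U}$. That order is preserved because $c$ realizes the transported type and $f$ commutes with $\lambda$ and $S$. I also need $b\notin\mathbb{Z}$, which holds since $\mathbb{Z}\subseteq M$. Thus $\widetilde f$ preserves $<$, $+$ and $-$. By \autoref{lem:ElementInROutsideOfM} (5), it also preserves $R$ in the form (2'): elements of $R^\mathcal{U}$ in $\widetilde M$ are either in $M$ or of the form $S^z(b)\mapsto S^z(c)$, and $S^z(c)\in R^\mathcal{U}$.

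Next I will verify condition (4) for $\widetilde f$. We have $\widetilde M\cap R^\mathcal{U}=(M\cap R^\mathcal{U})\cup\{S^z(b)\mid z\in\mathbb{Z}\}$. The maps $S^{\pm1}$ are definable in $(R^\mathcal{U};<)$ as successor and predecessor. So $\mathrm{tp}(b/M\cap R^\mathcal{U})=\mathrm{tp}(c/f(M\cap R^\mathcal{U}))$ implies the corresponding equality for the tuple $(S^z(b))_z$, provided we check $S^z(c)=\widetilde f(S^z(b))$ together with the congruence classes. Both follow because they are part of the type.

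The main obstacle is condition (3): showing that $\widetilde f$ is elementary in $(U;<,+)$. By \autoref{thm:QEofPA}, it suffices to check that $\widetilde f$ preserves order, which is done above, and congruences $A(b)+a\equiv_n i$. The congruence class of $A(b)$ modulo $n$ is determined by the residues of $S^j(b)$ modulo $n$, and these are coded in the type of $b$ by the predicates $\cdot\equiv_n i$. The congruence class of $a$ is preserved because $f$ satisfies condition (3). Adding these gives the result. Conditions (1), (2') and (3) hold, as does (4), so \autoref{lem:ConditionstobeinI} yields $\widetilde f\in\mathcal{I}$.
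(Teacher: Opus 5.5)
Your proposal is correct and follows the paper's proof essentially step for step. It uses the same extension $\widetilde f(A(b)+a)=A(c)+f(a)$ on $\widetilde M$, controls signs via \autoref{lem:ElementInROutsideOfM} (3), transfers congruences through the residue predicates, gets condition (3) from \autoref{thm:QEofPA} and condition (4) from definable closure in the coloured order, and concludes with \autoref{lem:ConditionstobeinI}. The differences are cosmetic: you phrase the order comparison as $|a|\ll b\iff|f(a)|\ll c$ rather than the paper's Claim 1 ($a\lessgtr b\iff f(a)\lessgtr c$), and you make explicit the check $c\notin f(M)$, which the paper uses silently when applying \autoref{lem:ElementInROutsideOfM} to $(f(M),c)$.
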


\begin{proof}
    Since $\mathcal{U}$ is a monster model, the structure $(R^\mathcal{U};<,(\cdot \equiv_n i)_{n,i})$ is sufficiently saturated.
    This saturation, combined with \autoref{dfn:BackandForthSystemforZ} (4), allows us to find such an element $c \in R^\mathcal{U}$.
    We now show that $f$ can be extended in a desired way.

    \setcounter{clm}{0}
    \begin{clm}
        For any $a \in M$, $a \lessgtr b \iff f(a) \lessgtr c$.
    \end{clm}

    \begin{prooc}
        We may assume $a > \mathbb{Z}$ since $b,c > \mathbb{Z}$.
        Then $\lambda(a) \leq a < S(a)$.
        Recall from \autoref{lem:ElementInROutsideOfM} (1) that either $a \ll b$ or $b \ll a$.
        If $b \ll a$, then $b < \lambda(a)$, and by our choice of $c$, we have $c < f(\lambda(a)) \leq f(a)$.
        Conversely, if $a \ll b$, then $S(a) < b$, which implies $f(a) < f(S(a)) < c$.
    \end{prooc}

    For any operator $A(x)$ and any $a \in M$, Claim 1 and \autoref{lem:ElementInROutsideOfM} (3) ensure that $A(b) + a \gtreqqless 0 \Leftrightarrow A(c) + f(a) \gtreqqless 0$.
    Furthermore, by the choice of $c$, we have $S^z(b) \equiv_n S^z(c)$ for all $z \in \mathbb{Z}$ and $n \geq 2$, while $a \equiv_n f(a)$ holds by \autoref{dfn:BackandForthSystemforZ} (3).
    Thus, $A(b) + a \equiv_n  A(c) + f(a)$ for all $n \geq 2$.

    We define $\widetilde{M} = \left\{ A(b) + a \mid \text{$A$ is an operator and } a \in M \right\}$ and extend $f$ to $\widetilde{f}:\widetilde{M} \to \mathcal{U}$ by setting $\widetilde{f}(A(b) + a) = A(c) + f(a)$.
    The observations above ensure that $\widetilde{f}$ is a well-defined $\left\{ 0,1,+,-,<,(\equiv_n)_{n\geq 2} \right\}$-embedding.

    \begin{clm}
        $\widetilde{f}$ is in $\mathcal{I}$.
    \end{clm}

    \begin{prooc}
        \autoref{dfn:BackandForthSystemforZ} (1) is satisfied by \autoref{lem:ElementInROutsideOfM} (4).
        Since $\widetilde{f}$ is a partial elementary embedding of $(U;<,+)$ by \autoref{thm:QEofPA}, \autoref{dfn:BackandForthSystemforZ} (3) holds.
        By \autoref{lem:ElementInROutsideOfM} (5), $\widetilde{M} \cap R^\mathcal{U} = (M \cap R^\mathcal{U}) \cup \left\{ S^z(b) \mid z \in \mathbb{Z} \right\}$, satisfying \autoref{dfn:BackandForthSystemforZ} (2').
        Since this intersection is contained in the definable closure of $\left\{ b \right\} \cup (M \cap R^\mathcal{U})$ within $(R^\mathcal{U};<,(\cdot \equiv_n i)_{n,i})$, \autoref{dfn:BackandForthSystemforZ} (4) follows from the elementarity of $\widetilde{f}|_{\left\{ b \right\} \cup (M \cap R^\mathcal{U})}$ in this structure, which is equivalent to the assumption on types of $b$ and $c$.
    \end{prooc}
\end{proof}

\begin{rmk}\label{rmk:DomainIsTerms}
    Note that in \autoref{lem:ExtendabilityforR}, we constructed $\widetilde{f} \in \mathcal{I}$ such that $\mathrm{Dom}(\widetilde{f}) = \left\{ A(b) + a \mid \text{$A$ is an operator and } a \in \mathrm{Dom}(f) \right\}$.
\end{rmk}

Next, we show that any element can be added to the domain.

\begin{lem}\label{lem:SubLemmaforExtendabilityforAll}
    For any $f \in \mathcal{I}$ and any $b \in \mathcal{U}$, there exists an extension $\widetilde{f} \in \mathcal{I}$ of $f$ such that for any $x \in \langle b, \mathrm{Dom}(\widetilde{f}) \rangle_{\left\{ +,- \right\}}$, we have $\lambda(x) \in \mathrm{Dom}(\widetilde{f})$.
\end{lem}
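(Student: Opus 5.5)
The plan is to build $\widetilde{f}$ by iterating \autoref{lem:ExtendabilityforR} transfinitely (in fact $\omega$ times suffices for each new $\lambda$-value) and then take a union. Write $M = \mathrm{Dom}(f)$. Elements of $\langle b, M' \rangle_{\{+,-\}}$ for an $L_R$-substructure $M'$ have the form $nb + a$ with $n \in \mathbb{Z}$, $a \in M'$. The goal is a chain $f = f_0 \subseteq f_1 \subseteq \dots$ in $\mathcal{I}$, indexed by ordinals, such that the union $\widetilde{f}$ has domain $M^\ast$ with $\lambda(nb + a) \in M^\ast$ for all $n$ and all $a \in M^\ast$. Unions of chains in $\mathcal{I}$ stay in $\mathcal{I}$: conditions (1)–(4) are all finitary, i.e.\ substructure, embedding and elementarity are preserved under unions of chains.

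The construction step is as follows. Given $f_\alpha$ with domain $M_\alpha$, pick some $x = nb + a$ with $a \in M_\alpha$ and $\lambda(x) \notin M_\alpha$. Then $\lambda(x) \in R^\mathcal{U} \setminus M_\alpha$, so \autoref{lem:ExtendabilityforR} applies. Saturation of $(R^\mathcal{U};<,(\cdot\equiv_n i))$ gives a $c$ realizing the transported type, and we obtain $f_{\alpha+1} \in \mathcal{I}$ with domain $\{A(\lambda(x)) + a' : a' \in M_\alpha\}$, by \autoref{rmk:DomainIsTerms}. Since each step adds a single element $\lambda(x)$ together with operator terms, the domain stays small.

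At limits we take unions. We bookkeep with a well-ordering that enumerates all pairs $(n,a)$ as new $a$'s appear, iterating $\omega$ rounds in the usual closure argument. At the fixed point $M^\ast$, for every $n$ and every $a \in M^\ast$ we have $\lambda(nb + a) \in M^\ast$, which is exactly the required property. Note that we never put $b$ itself into the domain; this is consistent with the statement, which only requires the $\lambda$-values.

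The main obstacle is smallness and termination: the closure must not require more than a small number of steps. This is guaranteed because each round adds at most $|M_\alpha| + \aleph_0$ elements, and an $\omega$-chain of rounds yields a domain of size $|M| + \aleph_0$, which is small. A subtle point is to check that adding $\lambda(x)$ may create new sums $nb + a'$ whose $\lambda$ is new. This is handled by the $\omega$ rounds of closure, with no circularity, since each round only uses the previous domain. Optionally, one could first observe via \autoref{lem:ElementInROutsideOfM} that for most $n$ these values coincide with $S^z$ of finitely many new $R$-elements, but the crude closure argument already suffices.
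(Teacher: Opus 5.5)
Your proposal is correct. It runs on the same engine as the paper: repeated application of \autoref{lem:ExtendabilityforR}, taking unions of chains in $\mathcal{I}$. The difference is in how the closure is organized.

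The paper fixes the target set in advance, $T = \langle b, \mathrm{Dom}(f)\rangle_{\{+,-,\lambda,S,S^{-1}\}}$, and adds all of $T \cap R^{\mathcal{U}}$ in a single transfinite pass. It then uses \autoref{rmk:DomainIsTerms} to see that the domain never leaves $T$. Since $b \in T$ and $T$ is closed under $+,-,\lambda$, this gives $\lambda(x) \in T \cap R^{\mathcal{U}} \subseteq \mathrm{Dom}(\widetilde{f})$ at once, with no $\omega$ rounds and essentially no cardinality bookkeeping.

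Your $\omega$-round closure never needs the containment $\mathrm{Dom}(\widetilde{f}) \subseteq T$; it only needs the domains to stay small, which you check. That makes it slightly more robust. It is in fact the argument the paper itself uses for the $p$-adic analogue (\autoref{lem:SubLemmaforExtendabilityforAllinpR}). There the extension step passes to a relative algebraic closure, so the one-shot containment trick is no longer available.
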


\begin{proof}
    Let $T = \langle b, \mathrm{Dom}(f) \rangle_{\left\{ +,-,\lambda,S,S^{-1} \right\}}$.
    By applying \autoref{lem:ExtendabilityforR} transfinitely many times, we can expand $f$ to $\widetilde{f} \in \mathcal{I}$ such that $\mathrm{Dom}(\widetilde{f})$ contains $T \cap R^\mathcal{U}$.
    From \autoref{rmk:DomainIsTerms}, we have $\mathrm{Dom}(\widetilde{f}) \subseteq T$.
    It follows that $\langle b, \mathrm{Dom}(\widetilde{f}) \rangle_{\left\{ +,- \right\}} \subseteq T$, and thus for any $x \in \langle b, \mathrm{Dom}(\widetilde{f}) \rangle_{\left\{ +,- \right\}}$, we have $\lambda(x) \in T \cap R^\mathcal{U} \subseteq \mathrm{Dom}(\widetilde{f})$.
\end{proof}

\begin{lem}\label{lem:ExtendabilityforAll}
    Suppose $b \in \mathcal{U}$ and $f \in \mathcal{I}$ satisfy that for any $x \in \langle b, \mathrm{Dom}(f) \rangle_{\left\{ +,- \right\}}$, we have $\lambda(x) \in \mathrm{Dom}(f)$.
    Then there exists $c \in \mathcal{U}$ such that:
    \begin{equation*}
        \mathrm{tp}^{\left( U;<,+ \right)} (b, \mathrm{Dom}(f)) = \mathrm{tp}^{\left( U;<,+ \right)} (c, \mathrm{Im}(f)).
    \end{equation*}
    Moreover, for any such $c$, there exists an extension $\widetilde{f} \in \mathcal{I}$ of $f$ such that $\widetilde{f}(b) = c$.
\end{lem}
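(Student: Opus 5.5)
The plan is to take the extension by $b$ to be $\mathbb{Z}$-linear, and to use the hypothesis on $\lambda$ to show it lands in $\mathcal{I}$. The existence of $c$ is immediate from saturation: by \autoref{dfn:BackandForthSystemforZ} (3), $f$ is partial elementary in $(U;<,+)$, so the type $\mathrm{tp}^{(U;<,+)}(b/\mathrm{Dom}(f))$ transported along $f$ is consistent and is realized by some $c$ in the monster model.

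Now fix such a $c$ and set $M = \mathrm{Dom}(f)$ and $\widetilde{M} = \langle b, M \rangle_{\{+,-\}} = \{ nb + a \mid n \in \mathbb{Z},\ a \in M \}$. Define $\widetilde{f}(nb + a) = nc + f(a)$.

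First, $\widetilde{f}$ is well defined and is partial elementary in $(U;<,+)$. This holds because the type equality says precisely that $f \cup \{(b,c)\}$ is partial elementary in $(U;<,+)$. Its extension to the generated group is therefore still elementary: each element of $\widetilde{M}$ is $\emptyset$-definable from $b$ and $M$ by a group term, and the equality $nb+a = n'b+a'$ is part of the type. This gives condition (3) of \autoref{dfn:BackandForthSystemforZ}.

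Next comes the key point, condition (1). By hypothesis, $\lambda(x) \in M$ for every $x \in \widetilde{M}$. Since $S = S\circ\lambda$ and $S^{-1} = S^{-1}\circ\lambda$, and $M$ is closed under $S$ and $S^{-1}$, we get $\lambda(\widetilde{M}), S(\widetilde{M}), S^{-1}(\widetilde{M}) \subseteq M \subseteq \widetilde{M}$. Together with $0,1\in M$, this makes $\widetilde{M}$ an $L_R$-substructure.

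For condition (2'), take $x \in \widetilde{M} \cap R^\mathcal{U}$. Then $x = \lambda(x) \in M$, so $f(x) \in R^\mathcal{U}$ because $f \in \mathcal{I}$. The same computation gives $\widetilde{M} \cap R^\mathcal{U} = M \cap R^\mathcal{U}$, so $\widetilde{f}$ agrees with $f$ on $\widetilde{M}\cap R^\mathcal{U}$, and condition (4) is inherited from $f$. Once (1), (2'), (3) and (4) hold, \autoref{lem:ConditionstobeinI} upgrades (2') to (2), so $\widetilde{f} \in \mathcal{I}$.

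The only step I expect to need real care is checking that \autoref{lem:ConditionstobeinI} applies. Its proof uses (3) to compare $\widetilde{f}(x)$ with $\widetilde{f}(\lambda(x))$ and $\widetilde{f}(S(x))$, and uses (4) on $\lambda(x)$ and $S(x)$, which lie in $M \cap R^\mathcal{U}$. The hypothesis on $\lambda$ is exactly what guarantees these elements are already in the domain, so the argument goes through. No further use of almost sparseness is needed beyond what is already encoded in $f \in \mathcal{I}$.
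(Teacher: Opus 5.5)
Your proposal is correct and follows the paper's proof essentially step for step. You extend $f$ $\mathbb{Z}$-linearly to $\langle b, \mathrm{Dom}(f)\rangle_{\{+,-\}}$ and get (1) from the hypothesis on $\lambda$. You then observe that $\langle b, \mathrm{Dom}(f)\rangle_{\{+,-\}} \cap R^{\mathcal{U}} = \mathrm{Dom}(f) \cap R^{\mathcal{U}}$, which yields (2') and (4), and \autoref{lem:ConditionstobeinI} finishes the argument exactly as in the paper.
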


\begin{proof}
    By \autoref{dfn:BackandForthSystemforZ} (3) and the saturation of $\mathcal{U}$, such a $c \in \mathcal{U}$ exists.
    We extend $f$ to a partial $\left\{ <,+ \right\}$-elementary embedding $\widetilde{f}:\langle b, \mathrm{Dom}(f) \rangle_{\left\{ +,- \right\}} \to \mathcal{U}$ in the obvious way.
    We prove $\widetilde{f} \in \mathcal{I}$.
    \autoref{dfn:BackandForthSystemforZ} (3) is trivial.
    By our assumption, $\langle b, \mathrm{Dom}(f) \rangle_{\left\{ +,- \right\}}$ is closed under $\lambda,S,S^{-1}$, so \autoref{dfn:BackandForthSystemforZ} (1) holds.
    Furthermore, $\langle b, \mathrm{Dom}(f) \rangle_{\left\{ +,- \right\}} \cap R^\mathcal{U}$ = $\mathrm{Dom}(f) \cap R^\mathcal{U}$, which ensures \autoref{dfn:BackandForthSystemforZ} (2') and (4).
\end{proof}

Therefore, we have:

\begin{thm}\label{thm:PEEinR}
    $\mathcal{I}$ forms a back-and-forth system in the language $\left\{ <,+,R \right\}$.
    Consequently, every $f \in \mathcal{I}$ is a partial $\left\{ <,+,R \right\}$-elementary embedding.
\end{thm}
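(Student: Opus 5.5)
To prove \autoref{thm:PEEinR}, we check the axioms of \autoref{dfn:BackandForthSystem} for $\mathcal{I}$ and then apply \autoref{thm:BackandForthSystem}. First, each $f \in \mathcal{I}$ is a partial $\{<,+,R\}$-embedding by \autoref{dfn:BackandForthSystemforZ} (2), since $L_R$ contains $<,+,R$. Second, $\mathcal{I}$ is nonempty. The identity map on the $L_R$-substructure generated by $\emptyset$ qualifies, and this substructure is $\mathbb{Z}$, since $\lambda,S,S^{-1}$ preserve $\mathbb{Z}$. The identity is trivially elementary in $(U;<,+)$ and in $(R^\mathcal{U};<,(\cdot\equiv_n i)_{n,i})$. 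Third, $\mathcal{I}$ is closed under inverses. Conditions (1)--(4) are symmetric: the image of an $L_R$-embedding defined on an $L_R$-substructure is again an $L_R$-substructure, and the inverse of a partial elementary map is partial elementary. Hence the `back' condition for $f$ is the `forth' condition for $f^{-1}$, and it suffices to prove `forth'.

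For `forth', we take $f \in \mathcal{I}$ and $a \in U$, and extend in two stages.
\begin{enumerate}
    \item Apply \autoref{lem:SubLemmaforExtendabilityforAll} with $b = a$ to get $f_1 \in \mathcal{I}$ extending $f$ such that $\lambda(x) \in \mathrm{Dom}(f_1)$ for every $x \in \langle a, \mathrm{Dom}(f_1)\rangle_{\{+,-\}}$. This step is already a transfinite application of \autoref{lem:ExtendabilityforR}, which is needed because $T \cap R^\mathcal{U}$ may consist of many new elements of $R^\mathcal{U}$. Each new element is added via a realization $c$ of the corresponding type in $(R^\mathcal{U};<,(\cdot\equiv_n i))$, which exists by saturation. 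At limit stages we take unions: conditions (1)--(4) are finitary, so $\mathcal{I}$ is closed under unions of chains.
    \item The hypothesis of \autoref{lem:ExtendabilityforAll} now holds for $a$ and $f_1$. Saturation of $(U;<,+)$ gives $c$ with $\mathrm{tp}^{(U;<,+)}(a,\mathrm{Dom}(f_1)) = \mathrm{tp}^{(U;<,+)}(c,\mathrm{Im}(f_1))$. The lemma then yields $\widetilde{f} \in \mathcal{I}$ extending $f_1$, and hence $f$, with $a \in \mathrm{Dom}(\widetilde{f})$.
\end{enumerate}

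With `forth' and `back' established, \autoref{thm:BackandForthSystem} shows that every $f \in \mathcal{I}$ is a partial $\{<,+,R\}$-elementary embedding.

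The only step needing care is the smallness bookkeeping in stage 1. The set $T$ generated by $a$ and $\mathrm{Dom}(f)$ under $+,-,\lambda,S,S^{-1}$ is small, so the transfinite construction has small length and produces a small domain. Every other part of the argument is a direct citation of the preceding lemmas.
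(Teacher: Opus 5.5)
Your proposal is correct and follows the paper's route. You reduce to the `forth' condition because $\mathcal{I}$ is closed under inverses, obtain `forth' by applying \autoref{lem:SubLemmaforExtendabilityforAll} and then \autoref{lem:ExtendabilityforAll}, and conclude with \autoref{thm:BackandForthSystem}. Your extra checks (nonemptiness via the identity on $\mathbb{Z}$, closure of $\mathcal{I}$ under unions of chains at limit stages, and smallness of the construction) are sound and only make explicit what the paper leaves implicit.
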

\subsection{Variable separation}
\label{sec:ZRTerm}

Let $(\overline{a_k})_{k\in I+(c)+J}$ be a sequence and $b$ a singleton satisfying the assumptions in \autoref{dfn:Distality}.
As noted in \autoref{rmk:SaturatedIndices}, we assume $I$ and $J$ are sufficiently saturated dense linear orders without endpoints.
We keep following the terminology in \autoref{dfn:IndisParameter}.
In this section, we refer to $\left\{ +,-,\lambda,S,S^{-1} \right\}$-terms as terms for brevity.

\begin{lem}\label{lem:TermSeparationforZR}
    Let $t(\overline{x},y)$ be a term.
    Then there exist a truncation $(I',J') \subset_{\mathrm{t}} (I,J)$ and terms $u(\overline{x})$ and $r(y)$ involving parameters outside $(I',J')$, such that:
    \begin{equation*}
        t(\overline{a_k},b) = u(\overline{a_k}) + r(b)
    \end{equation*}
    for all $k \in I'+(c)+J'$.
    In particular, if $t$ is of the form $\lambda(t')$, $S(t')$, or $S^{-1}(t')$, then either $u$ or $r$ is $0$.
\end{lem}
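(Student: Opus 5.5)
Induction on the complexity of $t$. Variables and constants are immediate: $x_i$ gives $u=x_i$, $r=0$; $y$ gives $u=0$, $r=y$. Sums and differences are also immediate: take a common truncation (the intersection of two truncations is again one) and add the decompositions. Since $S(z)=S(\lambda(z))$ and $S^{-1}(z)=S^{-1}(\lambda(z))$, the real case is $t=\lambda(t')$, and afterwards we apply $S^{\pm1}$ to whichever summand survives. By induction, $t'(\overline{a_k},b)=u(\overline{a_k})+r(b)$ on some truncation, and we must show that $\lambda(u(\overline{a_k})+r(b))$ equals either $\lambda(u'(\overline{a_k}))$ or $\lambda(r'(b))$ uniformly on a smaller truncation. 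Here $u',r'$ are allowed to carry parameters $a_{k_0}$ outside the truncation.

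The key dichotomy comes from \autoref{lem:ElementInROutsideOfM} and \autoref{lem:ApproxAndLL}. If $v+w>\mathbb{Z}$ and one of $|v|,|w|$ is $\ll$ the other, then $\lambda(v+w)$ is computed from the larger one alone. Precisely, if $|v|\ll w$, then $\lambda(w+v)$ is $\lambda(w)$ or $S^{-1}(\lambda(w))$ according as $w+v\ge\lambda(w)$ or not. The trouble is that this is a case split, not a term. To handle it, compare $u_k:=u(\overline{a_k})$ with $r(b)$ via $\approx$ and $\ll$. The sets
\[\{k: |u_k|\ll |r(b)|\},\quad \{k: |u_k|\approx |r(b)|\},\quad \{k:|r(b)|\ll|u_k|\}\]
are convex in $I+J$, because $(u_k)$ is indiscernible and $\ll$/$\approx$ are type-definable from $<$ and $S$. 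Each is also $b$-invariant on $I$ and on $J$ by indiscernibility over $b$. So on a truncation just around the cut, all $k\in I'+J'$ fall in a single case. I then argue that $k=c$ falls in the same case: otherwise $u_c$ sits at a different $\approx$-level from its neighbours on both sides, which I would rule out by an order argument. The $\approx$-classes are convex, and $u_i\le u_c\le u_j$ (or the reverse) by indiscernibility; if the $\approx$-class of $r(b)$ separated $I'$ from $J'$, then the inequalities with $b$ would differ on $I'$ and $J'$, contradicting indiscernibility of $I+J$ over $b$.

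We then finish case by case.

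\emph{Case $|u_k|\ll|r(b)|$.} Here $\lambda(t)$ is $\lambda(r(b))$ or $S^{-1}\lambda(r(b))$, with signs handled via \autoref{lem:ElementInROutsideOfM}(3). Which one occurs is decided by whether $r(b)-\lambda(r(b))+u_k\ge0$. This condition is constant on $I'+J'$ by indiscernibility over $b$. To extend it to $k=c$, use the fact that the quantity $r(b)-\lambda(r(b))$ is fixed and $u_c$ lies between $u_i$ and $u_j$, so it is sandwiched. This gives $r'$ and $u'=0$.

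\emph{Case $|r(b)|\ll|u_k|$.} This is symmetric: $\lambda(t)=S^{\epsilon}\lambda(u_k)$ with a uniform $\epsilon$. Take $r'=0$.

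\emph{Case $|u_k|\approx|r(b)|$ for all $k$ near the cut.} This case is the main obstacle. Here all $u_k$ are mutually $\approx$. I would fix $k_0\in I\setminus I'$ and write $u_k=u_{k_0}+(u_k-u_{k_0})$. The plan is to show that either $u_k-u_{k_0}$ is $\ll u_{k_0}$ (reducing to the first two cases after regrouping $r(b)+u_{k_0}$ as the new $b$-part), or the sequence $\lambda(u_k)$ is constant. Two facts support this. First, by \autoref{lem:OperatorBounded} and almost sparseness, the values in a single $\approx$-class near a fixed point are controlled by operators applied to one $R$-element. Second, an indiscernible sequence inside one $\approx$-class must have either constant $\lambda$ or pairwise differences of strictly smaller magnitude. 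Iterating this regrouping should terminate, since each step strictly lowers the $\approx$-level of the $\overline{a_k}$-dependent part.
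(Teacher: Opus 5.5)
Your reduction to $t=\lambda(t')$, the sandwich argument showing that the sign of $u_k+r(b)$ is constant on all of $I+(c)+J$, and your two ``separated'' cases are essentially sound. There is one slip there. If $|v|\ll w$ and $v>0$, then $\lambda(w+v)$ can also be $S(\lambda(w))$, because $w$ may lie just below $S(w)$. You must therefore also test the sign of $w+v-S(w)$, which your sandwich argument handles.

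The genuine gap is the case $|u_k|\approx|r(b)|$, which you leave as a plan. This is exactly where opposite signs can cancel, and your proposal does not resolve it:
\begin{itemize}
\item Inside one $\approx$-class, the branch ``$\lambda(u_k)$ constant'' always holds. Indiscernibility gives $\lambda(u_l)=S^z(\lambda(u_k))$ with $z$ independent of $k<l$, which forces $z=0$. But you never say how this determines $\lambda(u_k+r(b))$.
\item The branch you actually need, $u_k-u_{k_0}\ll u_{k_0}$, is not automatic. When $r_{n+1}/r_n\to\infty$, an indiscernible sequence in $[\rho,S(\rho))$ can have gaps exceeding $S^{-1}(\rho)$, so that $u_l-u_k\approx u_k$.
\item ``Each step lowers the $\approx$-level'' is not a termination argument, since the $\approx$-classes of a nonstandard model admit infinite descending chains.
\item Taking $k_0\in I$ regardless of whether $(u_k)$ increases or decreases can simply reproduce the cancellation.
\end{itemize}

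The missing idea, which is how the paper argues, is a single regrouping chosen so that cancellation cannot occur.
\begin{enumerate}
\item Reduce to $u_k+r(b)>0$ for every $k\in I+(c)+J$; otherwise $\lambda(t)$ is a constant integer.
\item If $(u_k)$ is constant, absorb it into the $b$-part.
\item If $(u_k)$ is strictly increasing, fix $i_0\in I$, truncate $I$ above $i_0$, and write $u_k+r(b)=(u_k-u_{i_0})+(u_{i_0}+r(b))$. If it is decreasing, do the same with some $j_0\in J$.
\end{enumerate}
Both new summands $u'_k$ and $r'(b)$ are now positive. The second is positive because $i_0$ itself lies in $I+(c)+J$, where positivity was arranged.

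After this, no trichotomy is needed. By sign-constancy, exactly one of two things happens.
\begin{itemize}
\item Either $u'_k>S^m(r'(b))$ for all $k$ and all $m$. This is your second case.
\item Or there is a fixed $m$ with $u'_k\le S^m(r'(b))$ for all $k$. Then $r'(b)<u'_k+r'(b)<S^{m'}(r'(b))$ for a fixed $m'$, so $\lambda(t)=S^z(r'(b))$ for one of finitely many $z$. Sign-constancy, applied to the comparisons with $S^0(r'(b)),\dots,S^{m'}(r'(b))$, makes $z$ independent of $k$, including $k=c$.
\end{itemize}
No iteration is required.
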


\begin{proof}
    First, note that for any terms $u(\overline{x})$ and $r(y)$, the sign of $u(\overline{a_k}) + r(b)$ remains constant as $k$ ranges over $I + (c) + J$.
    Indeed, the assumptions on indiscernibility imply the monotonicity of $(u(\overline{a_k}))_{k\in I+(c)+J}$ and the constancy of the sign of $(u(\overline{a_k}) + r(b))_{k\in I+J}$.
    We will make repeated use of this fact implicitly.

    We proceed by induction on the complexity of $t$.
    The nontrivial case is when $t$ is the form of $\lambda(t')$ (the cases for $S$ and $S^{-1}$ follow since $S = S \circ \lambda$ and $S^{-1} = S^{-1} \circ \lambda$).
    By the inductive hypothesis, we may write $t'(\overline{a_k},b) = u(\overline{a_k}) + r(b)$.
    We may assume $u(\overline{a_k}) + r(b) > 0$ for all $k \in I+(c)+J$, since otherwise $\lambda(u(\overline{a_k}) + r(b))$ constantly equals an integer.

    \setcounter{case}{0}
    \begin{case}
        $(u(\overline{a_k}))_{k\in I+(c)+J}$ is constant.

        By choosing $i_0 \in I$ and shrinking $I$ to a final segment $I'$ such that $i_0 \notin I'$, we can denote $u(\overline{a_k}) + r(b)$ as $r'(b)$, where $r'(y) = u(\overline{a_{i_0}}) + r(y)$ is a term involving parameters outside $(I',J)$.
    \end{case}

    \begin{case}
        $(u(\overline{a_k}))_{k\in I+(c)+J}$ is strictly monotonic.

        Suppose this sequence is strictly increasing (the decreasing case is symmetric by shrinking $J$).
        We choose some $i_0 \in I$ and shrink $I$ to $I'$ such that $i_0 \notin I'$. Then:
        \begin{equation*}
            u(\overline{a_k}) + r(b) = (u(\overline{a_k}) - u(\overline{a_{i_0}})) + (u(\overline{a_{i_0}}) + r(b)).
        \end{equation*}
        It allows us to redefine $u$ and $r$ so that $u(\overline{a_k})$ and $r(b)$ are both positive.

        If $u(\overline{a_k}) > S^m(r(b))$ for any $k$ and $m$, then by \autoref{lem:ApproxAndLL} (2), we have $\lambda(u(\overline{a_k})) < u(\overline{a_k}) + r(b) < S^2(u(\overline{a_k}))$.
        Since the sign of $S(u(\overline{a_k})) - (u(\overline{a_k}) + r(b))$ is constant, $\lambda(u(\overline{a_k}) + r(b))$ equals either $\lambda(u(\overline{a_k}))$ of $S(u(\overline{a_k}))$ independently of $k$.

        If there exists $m$ such that for all $k$, we have $u(\overline{a_k}) \leq S^m(r(b))$, then $u(\overline{a_k}) + r(b) \approx r(b)$ by \autoref{lem:ApproxAndLL} (1).
        It implies that there exists $z \in \mathbb{Z}$ such that for all $k$, we have $S^z(r(b)) \leq u(\overline{a_k}) + r(b) < S^{z+1}(r(b))$.
        Thus $\lambda(u(\overline{a_k}) + r(b)) = S^z(r(b))$.
    \end{case}
\end{proof}
\subsection{Distality}
\label{sec:ZRDistal}

We established a back-and-forth system for $(\mathbb{Z};<,+,R)$ in \autoref{sec:ZRBaF}.
Intuitively, it demonstrates that the model-theoretic behavior of the expansion is determined by its components: $(\mathbb{Z};<,+)$ and $(R;<,(\cdot \equiv_n i)_{n,i})$.
It is known that these structures are dp-minimal, and therefore distal.

\begin{thm}[{Aschenbrenner, Dolich, Haskell, Macpherson, and Starchenko \cite{Aschenbrenner2016}}]\label{thm:NIPofPA}
    $(\mathbb{Z};<,+)$ is dp-minimal.
\end{thm}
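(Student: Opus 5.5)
The statement is the theorem of Aschenbrenner, Dolich, Haskell, Macpherson, and Starchenko that $(\mathbb{Z};<,+)$ is dp-minimal. I would prove it from quantifier elimination, \autoref{thm:QEofPA}, together with the standard reduction of dp-minimality to one-variable definable sets. By \autoref{thm:QEofPA}, every formula $\varphi(x,\overline{y})$ with $x$ a single variable is equivalent to a Boolean combination of atomic formulas in the language $\left\{ 0,1,+,-,<,(\equiv_n)_{n\geq 2} \right\}$. Each such atomic formula, after collecting the coefficient of $x$, has the form $mx < t(\overline{y})$, $mx = t(\overline{y})$, or $mx \equiv_n t(\overline{y})$, with $m\in\mathbb{Z}$ and $t$ a term.

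Hence every $\varphi(\mathcal{U},\overline{b})$ is a finite Boolean combination of two kinds of sets:
\begin{itemize}
    \item convex sets, namely half-lines $\{x : mx<d\}$ and points;
    \item cosets of $n\mathbb{Z}$, since $mx\equiv_n d$ describes a union of cosets of $n\mathbb{Z}$.
\end{itemize}
Now suppose, towards a contradiction, that there is an ICT-pattern of depth $2$. This gives formulas $\varphi(x,\overline{y})$, $\psi(x,\overline{z})$ and parameters $(\overline{b}_i)_{i<\omega}$, $(\overline{c}_j)_{j<\omega}$ such that for every $(i,j)$ some element satisfies $\varphi(x,\overline{b}_{i'})$ exactly when $i'=i$, and $\psi(x,\overline{c}_{j'})$ exactly when $j'=j$. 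By Ramsey and compactness we may take both parameter sequences mutually indiscernible.

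The key step is to split each formula as $\varphi \equiv \bigvee_\ell (\chi_\ell \wedge \theta_\ell)$. Here $\chi_\ell$ is a convex condition, i.e.\ an interval with endpoints given by terms in the parameters divided by fixed integers, and $\theta_\ell$ is a congruence condition modulo a fixed $N$, the lcm of the moduli occurring in $\varphi$ and $\psi$. Fix $N$ and pass to the expansion of the order by the coset predicates $x\equiv_N k$ for $k<N$. Since there are only finitely many cosets, we can pigeonhole on which coset of $N\mathbb{Z}$ the witnessing element lies in. After this, the congruence part of $\varphi(x,\overline{b}_i)$ becomes a uniform condition, independent of $i$ by indiscernibility. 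So on a fixed coset, each row is simply a finite union of intervals, with endpoints drawn from an indiscernible sequence of tuples.

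We are then reduced to the dp-minimality of a linear order with finitely many fixed unary predicates, where every definable one-variable family is a family of finite unions of intervals. This is the classical fact that o-minimal-like (weakly o-minimal) structures are dp-minimal. Concretely, the endpoints of the $\varphi(x,\overline{b}_i)$ are monotone in $i$ by indiscernibility, so the sets $\varphi(x,\overline{b}_i)$ are eventually nested or disjoint in a uniform manner. A point meeting exactly one row-set then lies in one of finitely many \emph{gap} regions between consecutive endpoints. Two such rows of gaps cannot be crossed independently in a linear order: for the $2\times 2$ pattern $(i,j)\in\{0,1\}^2$, the four witnesses produce an order contradiction.

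I expect this combinatorial crossing argument to be the main obstacle, in particular the bookkeeping needed to make it rigorous. An alternative is simply to invoke \cite{Aschenbrenner2016} or the general result that quasi-o-minimal or dp-minimal ordered groups include $(\mathbb{Z};<,+)$.
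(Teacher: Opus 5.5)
The paper gives no proof of this statement; it simply cites \cite{Aschenbrenner2016}. Your reduction via \autoref{thm:QEofPA} is a sensible self-contained route. But the step you flag as ``the main obstacle'' is not just bookkeeping: the argument you sketch for it is false. Once rows are unions of two or more intervals, the four witnesses of a $2\times 2$ sub-pattern cannot produce an order contradiction. In $(\mathbb{Z};<,+)$ take $\varphi(x,y)\colon (y\le x\le y+1)\vee(y+4\le x\le y+5)$ and $\psi(x,z)\colon z\le x\le z+3$, with $b_0=0$, $b_1=2$, $c_0=0$, $c_1=4$. The points $0,4,2,6$ witness the cells $(0,0),(0,1),(1,0),(1,1)$, all in one coset and with no congruence conditions involved. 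So any contradiction must use many rows and columns, and how many depends on the number of convex pieces per row. There are two further problems. First, what remains after fixing $N$ is not a weakly o-minimal structure, since coset predicates are not finite unions of convex sets, so that is not the fact to invoke. Second, ``pigeonholing'' the coset of the witnesses over an $\omega\times\omega$ array is not a plain pigeonhole: a finite colouring of $\omega\times\omega$ need not have an infinite monochromatic product $I'\times J'$ (colour $(i,j)$ by whether $i<j$).

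Either of the following closes the gap.

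(i) Indiscernibility makes the residues $t(\overline{b}_i) \bmod n$ and the truth values of the $x$-free atoms independent of $i$. Hence each $\varphi(\mathcal{U},\overline{b}_i)$ equals $\varphi'(\mathcal{U},\overline{e}_i)$ for a single formula $\varphi'$ in the language of $<$ together with unary predicates for the cosets $N\mathcal{U}+k$ ($k<N$). Here $\overline{e}_i$ lists elements such as $\lfloor (t(\overline{b}_i)-1)/m\rfloor$, and the same works for $\psi$. Your ICT-pattern is then an ICT-pattern in the coloured order $(\mathcal{U};<,(N\mathcal{U}+k)_{k<N})$, contradicting \autoref{thm:ColoredOrderDpminimality}. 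No coset pigeonhole is needed.

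(ii) Alternatively, count. On each coset $D$ of $N\mathcal{U}$, every $\varphi(\mathcal{U},\overline{b})\cap D$ and $\psi(\mathcal{U},\overline{c})\cap D$ is a union of at most $K$ convex subsets of $D$, with $K$ depending only on $\varphi$ and $\psi$. In an $n\times n$ pattern the witnesses $a_{ij}$ are pairwise distinct. List those lying in $D$ in increasing order: each row and each column occupies at most $K$ blocks of consecutive entries, and consecutive entries differ in row or column. Hence at most $2nK-1$ witnesses lie in $D$. Summing over the $N$ cosets gives $n^2\le N(2nK-1)$, which fails for $n\ge 2NK$.
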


\begin{thm}[{Simon \cite[a special case of Proposition 4.2]{Simon2011}}]\label{thm:ColoredOrderDpminimality}
    Let $M$ be a linear order and $(C_i)_{i<\alpha}$ be a family of unary subsets of $M$.
    Then, $(M;<,(C_i)_{i<\alpha})$ is dp-minimal.
\end{thm}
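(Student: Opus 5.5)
The plan is to prove dp-minimality through the indiscernible-sequence characterization: $T$ is dp-minimal if and only if, for every indiscernible sequence $(\overline{a_i})_{i\in I}$ (with $I$ a dense order, as in \autoref{rmk:SaturatedIndices}) and every singleton $b$, there is a cut $(I_1,I_2)$ of $I$ such that both $(\overline{a_i})_{i\in I_1}$ and $(\overline{a_i})_{i\in I_2}$ are indiscernible over $b$. It suffices to check this one formula at a time. So fix a formula $\varphi(\overline{x},y)$. Only finitely many colours $C_{i_1},\dots,C_{i_r}$ occur in $\varphi$, and its quantifier rank is some $q$. We may then work in the language $\{<,C_{i_1},\dots,C_{i_r}\}$ and with $q$-types. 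There are only finitely many rank-$q$ types of coloured linear orders in this finite language.

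The main tool is the Feferman--Vaught / Ehrenfeucht--Fra\"iss\'e composition theorem for ordered sums of coloured orders. Let $e_1\le\dots\le e_n$ be the points of $\overline{a_i}b$ listed in increasing order. The rank-$q$ type of $\overline{a_i}b$ is determined by three pieces of data:
\begin{itemize}
    \item the order type of $b$ relative to the coordinates of $\overline{a_i}$;
    \item the colours of $b$ and of those coordinates;
    \item the rank-$q$ types of the open intervals $(-\infty,e_1)$, $(e_1,e_2)$, $\dots$, $(e_n,\infty)$.
\end{itemize}
Moreover, the rank-$q$ type of an interval is the ``sum'' of the types of its pieces, computed in a finite semigroup $\Sigma_q$.

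Step one handles the order-theoretic data. For each coordinate $x$, the sequence $(a_{i,x})_{i\in I}$ is constant or strictly monotone by indiscernibility, so the set of $i$ with $a_{i,x}<b$ is convex. Hence there are only finitely many switch points, one per coordinate. Between consecutive switch points, the position of $b$ among the coordinates of $\overline{a_i}$ is constant. The same arguments below apply on each of these finitely many segments. To get down to a single cut, I would use the standard dp-minimality of pure linear orders; this is the routine part.

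Step two, the heart of the proof, treats the interval types. On a segment where the position of $b$ is constant, every interval not containing $b$ is bounded by coordinates of $\overline{a_i}$ (or by $\pm\infty$), so its type is given by $\mathrm{tp}(\overline{a_i})$ and is constant. The remaining issue is the two intervals with endpoint $b$, say $(e_i,b)$ and $(b,e'_i)$, where $e_i$ and $e'_i$ are coordinates of $\overline{a_i}$. Suppose $(e_i)$ is increasing and stays below $b$. For $i<j$ we have
\[
(e_i,b)=(e_i,e_j)+\{e_j\}+(e_j,b),
\]
so $t_i=\sigma+t_j$, where $t_i$ denotes the rank-$q$ type of $(e_i,b)$ and $\sigma$ denotes the type of $(e_i,e_j)+\{e_j\}$. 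By indiscernibility, $\sigma$ does not depend on $i<j$. Splitting $(e_i,e_k)$ at $e_j$ gives $\sigma=\sigma+\sigma$, so $\sigma$ is idempotent. Hence, for $i<j<l$,
\[
t_i=\sigma+t_j=\sigma+\sigma+t_l=\sigma+t_l=t_j .
\]
So $t_i$ is constant on the segment. The decreasing case and the interval $(b,e'_i)$ are symmetric, and constant sequences are trivial. The same computation with several indices at once, splitting at all the relevant coordinates, gives full indiscernibility of each segment over $b$ for the formula $\varphi$, not just constancy of $\mathrm{tp}(\overline{a_i}b)$.

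I expect the main obstacle to be bookkeeping rather than any deep point. The composition theorem has to be stated precisely enough to handle the singleton $b$, the tuples, and finitely many indices simultaneously. Step one must also genuinely produce a single cut rather than several. If the pure-order reduction does not directly give one cut, I would fall back on Simon's inp-pattern formulation. In that formulation, two rows of a pattern of depth $2$ would each force a distinct switch point in the position of $b$, and the order argument above rules this out.
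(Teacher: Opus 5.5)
The paper does not prove this statement; it quotes it from Simon, points to \cite[Proposition A.2]{Simon2015}, and uses it only as a black box (via \autoref{rmk:DistalImplications}) to get distality of the coloured order on $R$. Your proposal is therefore a genuinely independent, self-contained route. It combines the indiscernible-sequence characterization of dp-minimality with the Feferman--Vaught composition theorem for ordered sums of coloured orders, so everything reduces to one classical Ehrenfeucht--Fra\"iss\'e fact. The citation in the paper buys only brevity. Your strategy works, but three points need repair.

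\emph{The criterion as you state it is false.} You must allow one index at the cut to be discarded. The correct version asks for a point $\mathfrak{c}$ of the completion of $I$ (an element or a gap) such that $(\overline{a_i})_{i<\mathfrak{c}}$ and $(\overline{a_i})_{i>\mathfrak{c}}$ are indiscernible over $b$. Two examples show this is necessary:
\begin{itemize}
\item In a dense linear order, take an increasing indiscernible sequence and $b=a_{t_0}$. Then no partition $I=I_1+I_2$ works.
\item In $\mathrm{Th}(\mathbb{Z},<)$, take $b$ to be the successor of $a_{t_0}$. The interval $(a_{t_0},b)$ is empty, while $(a_t,b)\ni a_{t_0}$ is nonempty for $t<t_0$. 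So the last index below $b$ has a different type over $b$, even though $b$'s position relative to it is unchanged.
\end{itemize}
Your own computation detects this. Deducing $t_i=t_j$ needs an index $l>i,j$ in the same segment, and this fails at a right endpoint such as $t_0$ in $(-\infty,t_0]$. With the corrected criterion both sides are open intervals of a dense order without endpoints, so such an $l$ always exists. Idempotence of $\sigma$ is then superfluous, since $t_i=\sigma+t_l=t_j$ directly.

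\emph{Do not obtain the single cut from dp-minimality of pure linear orders.} That is just the colourless case of the statement you are proving, and the step is elementary anyway. Suppose two strictly monotone coordinates $x,y$ cross $b$ at different points $\mathfrak{c}_x<\mathfrak{c}_y$. The three regions these points determine contain some pairs $i<j$ for which $b$ separates $a_{i,x}$ from $a_{j,y}$ one way, and other pairs $i<j$ for which it separates them the other way. (Check the two cases according to whether $x$ and $y$ start on the same side of $b$.) This contradicts indiscernibility of $(\overline{a_i})$.

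\emph{State that the cut is independent of $\varphi$.} Say explicitly that this common crossing point does not depend on $\varphi$. That is what lets a single cut serve all formulas simultaneously, as the criterion requires, and it is what justifies checking one formula at a time.
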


A concise proof of \autoref{thm:ColoredOrderDpminimality} can be found in Simon's textbook \cite[Proposition A.2]{Simon2015}.
The dp-minimality of these structures implies their distality (see \autoref{rmk:DistalImplications}).

\begin{thm}
    $(\mathbb{Z};<,+,R)$ is distal.
\end{thm}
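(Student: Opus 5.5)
We apply \autoref{thm:DistalityCriterion} to $\widetilde{\mathcal{N}} = \mathcal{U}$, the monster model of $(\mathbb{Z};<,+,R)$. The choices are: $\mathcal{N} = (U;<,+)$; $\mathcal{R} = (R^\mathcal{U};<,(\cdot\equiv_n i)_{n,i})$, which is $\emptyset$-definable in $\widetilde{\mathcal{N}}$; $\mathcal{F}_N = \{0,1,+,-\}$; $\mathcal{F}_R = \{S,S^{-1}\}$, both $\emptyset$-definable in $\mathcal{R}$ since $S$ is the successor in the induced order; and $\lambda$ as in \autoref{dfn:LambdaRSS-1}. Under these choices the $\mathcal{F}$-terms are exactly the $\{+,-,\lambda,S,S^{-1}\}$-terms of \autoref{sec:ZRTerm}, up to constants. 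We then verify conditions (1)--(4) in turn.

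Condition (1) follows from \autoref{thm:NIPofPA} and \autoref{thm:ColoredOrderDpminimality} via \autoref{rmk:DistalImplications}. For condition (4), \autoref{lem:TermSeparationforZR} gives $t(\overline{a_k},b) = u(\overline{a_k}) + r(b)$ on a truncation. We take $\overline{r} = (u)$, $\overline{s} = (r)$ and the $\mathcal{F}_N$-term $z+w$. If $t$ begins with $\lambda$, $S$ or $S^{-1}$, the lemma says one summand vanishes. If $u = 0$, then $r(b) = t(\overline{a_k},b) \in R^\mathcal{U}$. If $r = 0$, we take $\overline{s}$ empty. In either case $\overline{s}(b)\in R$.

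For conditions (2) and (3), let $M$ be closed under $\mathcal{F}$. Closure under $\pm$, $1$ and $\lambda$ makes $M$ a subgroup containing $\mathbb{Z}$ and closed under $\lambda,S,S^{-1}$, so $M$ is an $L_R$-substructure. The identity $\mathrm{id}_M$ lies in $\mathcal{I}$. For (2), take $b,b'\in R^\mathcal{U}$ with the same $\mathcal{R}$-type over $M\cap R^\mathcal{U}$. If $b\in M$, then $b=b'$. Otherwise $b'\notin M$, and \autoref{lem:ExtendabilityforR} applied to $f = \mathrm{id}_M$ and $c = b'$ gives $\widetilde f\in\mathcal{I}$ with $\widetilde f(b)=b'$ and $\widetilde f$ the identity on $M$. \autoref{thm:PEEinR} then yields $\mathrm{tp}(b/M)=\mathrm{tp}(b'/M)$. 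For (3), the hypothesis on $\lambda$ is exactly the hypothesis of \autoref{lem:ExtendabilityforAll} for $f=\mathrm{id}_M$. Since $\mathrm{tp}^\mathcal{N}(b/M)=\mathrm{tp}^\mathcal{N}(b'/M)$, we may take $c=b'$ and obtain an extension in $\mathcal{I}$ sending $b\mapsto b'$. \autoref{thm:PEEinR} again gives equality of full types.

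The main point requiring care is matching the criterion's abstract setting to the concrete one. The types of a possibly infinite $M$ (via \autoref{lem:BasicsofDistality}) must be handled so that $\mathrm{id}_M$ is genuinely in $\mathcal{I}$; conditions (3) and (4) of \autoref{dfn:BackandForthSystemforZ} are trivial for the identity. We also need constants such as $1$, so that $M\supseteq\mathbb{Z}$, to be available as $\mathcal{F}_N$-terms. Finally, the ``either $u$ or $r$ is $0$'' clause must deliver $\overline{s}(b)\in R$ in all cases. Once these are checked, \autoref{thm:DistalityCriterion} gives distality.
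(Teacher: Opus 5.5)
Your proposal is correct and follows the paper's proof. Like the paper, you apply \autoref{thm:DistalityCriterion} with $\mathcal{N}=(\mathbb{Z};<,+)$, $\mathcal{R}=(R;<,(\cdot\equiv_n i)_{n,i})$ and $\mathcal{F}_R=\{S,S^{-1}\}$, and discharge conditions (2), (3) and (4) via \autoref{lem:ExtendabilityforR}, \autoref{lem:ExtendabilityforAll} (with $f=\mathrm{id}_M$) and \autoref{lem:TermSeparationforZR}. Your addition of the constants $0,1$ to $\mathcal{F}_N$ is a worthwhile refinement that the paper leaves implicit: it guarantees that every $\mathcal{F}$-closed $M$ contains $\mathbb{Z}$, so that $\mathrm{id}_M$ genuinely lies in $\mathcal{I}$, whereas with $\{+,-\}$ alone $M$ could be, e.g., $2\mathbb{Z}$ when all $r_n$ are even.
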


\begin{proof}
    We apply \autoref{thm:DistalityCriterion} with $\mathcal{N} = (\mathbb{Z};<,+)$, $\mathcal{R} = (R;<,(\cdot \equiv_n i)_{n,i})$, $\mathcal{F}_N = \left\{ +,- \right\}$, and $\mathcal{F}_R = \left\{ S,S^{-1} \right\}$.
    Condition (2) of the theorem is satisfied by \autoref{lem:ExtendabilityforR}, condition (3) by \autoref{lem:ExtendabilityforAll}, and variable separation by \autoref{lem:TermSeparationforZR}.
    Since both $\mathcal{N}$ and $\mathcal{R}$ are distal, $(\mathbb{Z};<,+,R)$ is distal.
\end{proof}

Since distal structures have NIP, the dp-rank of $(\mathbb{Z};<,+,R)$ is at most $\leq \aleph_0$.
In fact, one can construct an ICT-pattern of depth $\aleph_0$, showing that the dp-rank is exactly $\aleph_0$.
The proof is left to the reader.
One may also derive this fact from Dolich and Goodrick \cite[Corollary 2.20]{Dolich2017}.

\section{\texorpdfstring{$(\mathbb{Q}_p;+,\cdot,p^\mathbb{Z})$}{(Qp;+,×,pZ)} and \texorpdfstring{$(\mathbb{Q}_p;+,\cdot,p^\mathbb{Z},p^R)$}{(Qp;+,×,pZ,pR)}}
\label{chp:QR}

In this chapter, we prove that the structures $(\mathbb{Q}_p;+,\cdot,p^\mathbb{Z})$ and $(\mathbb{Q}_p;+,\cdot,p^\mathbb{Z},p^R)$ are distal, where $R$ is an almost sparse sequence.

\subsection{Facts and brief history}
\label{sec:QRBasics}

Recall that the \emph{$p$-adic field} $\mathbb{Q}_p$ is the field consisting of formal Laurent series of the form:
\begin{equation*}
    \mathbb{Q}_p = \left\{ \sum_{i=m}^\infty a_i p^i\;\middle|\;m \in \mathbb{Z}, a_i \in \left\{ 0,1,\dots,p-1 \right\} \right\}
\end{equation*}
equipped with canonical addition and multiplication with carries.
The $p$-adic valuation $v_p: \mathbb{Q}_p \to \mathbb{Z}$ is defined by $v_p\left( \sum_{i=m}^\infty a_i p^i \right) = m$, where $a_m$ is the first nonzero coefficient.
$\mathbb{Q}_p$ is a henselian valued field, which implies that the $p$-adic valuation is $\emptyset$-definable in the language of fields.
Consequently, we may use the valuation as if it were in the language.
For a detailed account of model theory of the $p$-adic fields, see Marker's lecture notes \cite{Marker2019}.

\begin{ntt}
    For $n \geq 2$, let $P_n$ denote the set of $n$-th powers in $\mathbb{Q}_p^\times$: $P_n = \left\{ x \in \mathbb{Q}_p^\times \mid \exists y \in \mathbb{Q}_p^\times (x = y^n) \right\}$.
    Note that $P_n$ is a subgroup of $\mathbb{Q}_p^\times$.
\end{ntt}

The following fact is a standard consequence of henselianity (cf. Marker (ibid., Chapter 2)).

\begin{fct}\label{fct:PropertiesOfPn}
    For $n \geq 2$, the quotient group $\mathbb{Q}_p^\times / P_n$ is finite, and each coset is represented by a positive integer $k$.
    Moreover, for any $n$, there exists a positive integer $m$ such that if $x \in \mathbb{Q}_p^\times$ and $\varepsilon \in \mathbb{Q}_p$ satisfy $v_p(\varepsilon) \geq v_p(x) + m$, then $(x + \varepsilon) / P^n = x / P^n$. 
\end{fct}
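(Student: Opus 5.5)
Fact~\ref{fct:PropertiesOfPn} has two parts, and I will prove them separately using Hensel's lemma in $\mathbb{Z}_p$.

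\emph{Finiteness and integer representatives.} Write $x \in \mathbb{Q}_p^\times$ as $x = p^{v_p(x)} u$ with $u \in \mathbb{Z}_p^\times$. Then
\[
\mathbb{Q}_p^\times \cong \mathbb{Z} \times \mathbb{Z}_p^\times ,
\]
so $\mathbb{Q}_p^\times / P_n$ is a quotient of $\mathbb{Z}/n\mathbb{Z} \times \mathbb{Z}_p^\times/(\mathbb{Z}_p^\times)^n$. It therefore suffices to show that $(\mathbb{Z}_p^\times)^n$ has finite index in $\mathbb{Z}_p^\times$.

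For this I use the standard form of Hensel's lemma. Let $e = v_p(n)$ and $m_0 = 2e+1$, and let $u \in 1 + p^{m_0}\mathbb{Z}_p$. Consider the polynomial $X^n - u$ at $X = 1$. Its value has valuation
\[
v_p(1-u) \ge 2e+1 > 2\, v_p(n) = 2\, v_p\bigl(f'(1)\bigr).
\]
Hence $X^n - u$ has a root in $\mathbb{Z}_p$, and so $1 + p^{m_0}\mathbb{Z}_p \subseteq (\mathbb{Z}_p^\times)^n$. Consequently $\mathbb{Z}_p^\times/(\mathbb{Z}_p^\times)^n$ is a quotient of $(\mathbb{Z}/p^{m_0}\mathbb{Z})^\times$, which is finite.

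For representatives, each class is represented by $p^j u$ with $0 \le j < n$ and $u$ a unit. The unit $u$ may be replaced by a positive integer $u' \equiv u \pmod{p^{m_0}}$, since $u/u' \in 1 + p^{m_0}\mathbb{Z}_p \subseteq P_n$. Then $k = p^j u'$ is a positive integer in the same coset.

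\emph{Stability under small perturbations.} Take $m = m_0 = 2v_p(n)+1$. Suppose $v_p(\varepsilon) \ge v_p(x) + m$. Then
\[
\frac{x+\varepsilon}{x} = 1 + \frac{\varepsilon}{x} \in 1 + p^{m}\mathbb{Z}_p \subseteq P_n .
\]
In particular $x + \varepsilon \neq 0$, and since $P_n$ is a group, $(x+\varepsilon)P_n = xP_n$.

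The only nontrivial ingredient is the Hensel step. The rest is bookkeeping with the decomposition $x = p^{v}u$ and with approximating the unit $u$ by an integer modulo $p^{m_0}$.
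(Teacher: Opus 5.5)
Your proof is correct. Hensel's lemma with $m_0 = 2v_p(n)+1$ gives $1+p^{m_0}\mathbb{Z}_p \subseteq P_n$, and finiteness, the positive-integer representatives $p^j u'$, and the perturbation clause all follow from this exactly as you argue. The paper gives no proof of its own and only cites the fact as a standard consequence of henselianity (with a reference to Marker's notes); your Hensel-lemma argument is that standard argument written out.
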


We later use a well-known property of general valued fields:

\begin{lem}\label{lem:ValueofAlgebraicElement}
    Let $(L,v)$ be a valued field and $K \subseteq L$ is a subfield.
    If $a \in L$ is algebraic over $K$, then there exist $n \geq 1$ and $b \in K$ such that $nv(a) = v(b)$.
\end{lem}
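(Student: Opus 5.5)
The approach is through the minimal polynomial of $a$ over $K$ and the ultrametric inequality. Let $f(x) = x^d + c_{d-1}x^{d-1} + \dots + c_0$ with $c_i \in K$ be the minimal polynomial of $a$. If $a = 0$, take $n = 1$ and $b = 0$, with the convention $v(0) = \infty$. Otherwise $c_0 \neq 0$ by minimality. Write $c_d = 1$ and consider the values $v(c_i a^i) = v(c_i) + i\,v(a)$ for $0 \le i \le d$ with $c_i \neq 0$.

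The key step is the standard fact that in a valued field, if a finite sum of elements is $0$, then the minimum of their valuations is attained at least twice. Otherwise the ultrametric inequality would make the valuation of the sum equal to that unique minimum, which is finite, and the sum could not be $0$. Applying this to $\sum_{i=0}^d c_i a^i = 0$ yields indices $i < j$ with $c_i, c_j \neq 0$ and
\begin{equation*}
    v(c_i) + i\,v(a) = v(c_j) + j\,v(a).
\end{equation*}

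Rearranging gives $(j-i)\,v(a) = v(c_i) - v(c_j) = v(c_i/c_j)$. So we may take $n = j - i \geq 1$ and $b = c_i / c_j \in K$, which is exactly the required conclusion.

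No step is a serious obstacle. The only points needing care are the degenerate case $a = 0$ and checking that the two indices attaining the minimum are distinct and correspond to nonzero coefficients. Both are handled by restricting attention to the terms with $c_i \neq 0$. Among these the leading term $c_d a^d = a^d$ is always present, so the set of terms considered is nonempty.
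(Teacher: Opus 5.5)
Your proposal is correct and follows the paper's argument: the minimum of $v(c_i a^i)$ must be attained at two indices $i<j$, which gives $(j-i)v(a)=v(c_i/c_j)$. Treating $a=0$ separately and restricting to terms with $c_i\neq 0$ are careful additions that the paper leaves implicit. Using the minimal polynomial is harmless but unnecessary, since any nonzero polynomial vanishing at $a$ suffices.
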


\begin{proof}
    Choose $f(X) = \sum_i c_i X^i \neq 0$ satisfying $f(a) = 0$, where $c_i \in K$.
    If there is only one index $i$ such that $v(c_i a^i) = \min \left\{ v(c_k a^k) \right\}_k$, then $v(f(a)) = v(c_i a^i)$, which contradicts $f(a) = 0$.
    Hence, there must exist $i < j$ such that $v(c_i a^i) = v(c_j a^j)$.
    Therefore, $(j - i) v(a) = v\left( \frac{c_i}{c_j} \right)$.
\end{proof}

We now consider the set $p^{\mathbb{Z}} = \left\{ p^z \;\middle|\; z \in \mathbb{Z} \right\}$.
Macintyre \cite{Macintyre1976} showed that $(\mathbb{Q}_p;+,\cdot)$ admits quantifier elimination in the language $\left\{ 0,1,+,-,\cdot,(P_{n})_{n\geq 2} \right\}$.
Mariaule \cite{Mariaule2017} later established quantifier elimination for $(\mathbb{Q}_p;+,\cdot,p^{\mathbb{Z}})$ by introducing a projection map.

\begin{dfn}\label{dfn:Pi}
    We define the map $\pi: \mathbb{Q}_p \to p^\mathbb{Z}$ as:
    \begin{equation*}
        \pi(a) = \begin{cases}
            p^{v_p(a)} & (a \neq 0)\\
            1 & (a = 0)
        \end{cases}.
    \end{equation*}
\end{dfn}

Note that $\pi$ is a multiplicative endomorphism on $\mathbb{Q}_p^\times$.
Also, for any $a \in \mathbb{Q}_p^\times$, $\pi(a)$ is a unique element such that $\pi(a) \in p^\mathbb{Z}$ and $v_p(\pi(a)) = v_p(a)$.
Hence, $\pi$ is $\emptyset$-definable in $(\mathbb{Q}_p;+,\cdot,p^{\mathbb{Z}})$.
Conversely, $p^\mathbb{Z}$ is defined using $\pi$ as the set $\left\{ a \in \mathbb{Q}_p \;\middle|\; \pi(a) = a \right\}$.
Therefore, $(\mathbb{Q}_p;+,\cdot,p^{\mathbb{Z}})$ and $(\mathbb{Q}_p;+,\cdot,\pi)$ coincide with respect to definable set.

\begin{thm}[{Mariaule \cite{Mariaule2017}}]\label{thm:QEforQpZ}
    $(\mathbb{Q}_p;+,\cdot,p^{\mathbb{Z}})$ admits quantifier elimination in the language $\left\{ 0,1,+,-,\cdot,(P_{n})_{n\geq 2},\pi \right\}$.
\end{thm}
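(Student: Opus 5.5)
We prove quantifier elimination by showing that partial isomorphisms form a back-and-forth system, in the style of \autoref{thm:PEEinR}. Let $L_\pi = \left\{ 0,1,+,-,\cdot,(P_n)_{n\geq 2},\pi \right\}$ and let $\mathcal{U}$ be a monster model. Let $\mathcal{J}$ be the set of partial $L_\pi$-embeddings $f$ with small domain such that:
\begin{enumerate}
    \item $\mathrm{Dom}(f)$ is a subfield closed under $\pi$;
    \item $f$ restricted to $\mathrm{Dom}(f)\cap p^{\mathbb{Z}}$ is elementary in the induced structure $(p^{\mathbb{Z}};<_v,\cdot)\cong(\mathbb{Z};<,+)$.
\end{enumerate}
By \autoref{thm:QEofPA}, condition (2) holds once $f$ preserves order and the congruences, i.e.\ divisibility of $v$ by $n$. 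Because $p$-divisibility in $p^{\mathbb{Z}}$ is already visible through the $P_n$, condition (2) should follow from $f$ being an $L_\pi$-embedding together with closure under $\pi$. Any $L_\pi$-isomorphism between substructures is first extended to the field generated (and then closed under $\pi$), so $\mathcal{J}$ is essentially the set of $L_\pi$-isomorphisms between small substructures. By \autoref{thm:BackandForthSystem}, proving that $\mathcal{J}$ has the forth property suffices, and the back property is symmetric.

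For the forth step, take $f\in\mathcal{J}$ with $M=\mathrm{Dom}(f)$ and $b\in\mathcal{U}$. Let $T$ be the closure of $M\cup\{b\}$ under field operations and $\pi$.
\begin{enumerate}
    \item[Step 1.] Adjoin every new element of $T\cap p^{\mathbb{Z}}$, in the manner of \autoref{lem:SubLemmaforExtendabilityforAll}. For a new $d\in p^{\mathbb{Z}}\setminus M$, saturation of $(p^{\mathbb{Z}};<_v,\cdot)$ together with (2) gives $e$ with the same ordered-group type over $f(M\cap p^{\mathbb{Z}})$. The subgroup $M\cap p^{\mathbb{Z}}$ is pure in $p^{\mathbb{Z}}$, since $M$ is closed under $\pi$ and, by \autoref{lem:ValueofAlgebraicElement}, roots of elements of $M$ are algebraic. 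Hence $d$ is transcendental over $M$ unless $d^n\in M$, which purity rules out. So we extend $f$ to $M(d)$ by $d\mapsto e$. The values of $M(d)$ form the group $v(M)\oplus\mathbb{Z}v(d)$, so $\pi$ of a polynomial in $d$ is $\pi(\text{dominant monomial})$, and this image is preserved because $e$ realizes the matching cut. The $P_n$-classes are preserved by \autoref{fct:PropertiesOfPn}: they are determined by the dominant monomial $c\,d^k$, and $d^k$ is a power of $p$ whose class depends only on $k$ modulo the appropriate period. Iterating transfinitely, we reach $f'$ whose domain $M'$ satisfies $\pi(x)\in M'$ for all $x\in M'(b)$.
\item[Step 2.] Now $v(M'(b))=v(M')$, so $b$ generates no new value and no new element of $p^{\mathbb{Z}}$. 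Choose $c$ realizing $f'(\mathrm{tp}^{(\mathbb{Q}_p;+,\cdot)}(b/M'))$. Macintyre QE makes $M'(b)\to M'(c)$ a field isomorphism preserving the $P_n$. Then $\pi$ is automatically preserved: for $x\in M'(b)$, $\pi(x)$ is the unique $m\in M'\cap p^{\mathbb{Z}}$ with $v(x)=v(m)$, and the valuation is definable from $P_n$ in the $p$-adic field, so $v(x)=v(m)$ transfers.
\end{enumerate}

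The main obstacle is Step 1: verifying that $P_n$-membership and $\pi$ are preserved on $M(d)$. This amounts to a valued-field embedding lemma for a value-transcendental extension, i.e.\ $d$ realizes the cut in the value group with no residue change. I would reduce each element of $M(d)$ to its dominant monomial via the ultrametric inequality and then apply \autoref{fct:PropertiesOfPn}. One also needs to handle the case $d$ algebraic, which I would exclude using \autoref{lem:ValueofAlgebraicElement} and the $\mathbb{Z}$-group purity of $v(M)$. Given the back-and-forth property, QE in $L_\pi$ follows by the standard criterion.
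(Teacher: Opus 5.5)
The paper does not prove this theorem; it is quoted from Mariaule, so there is no proof of the paper's to compare with. Your plan is the natural one: handle new elements of $p^{\mathbb{Z}}$ by dominant monomials, then use Macintyre's QE and transfer $\pi$ once no new values appear. This is the mechanism of \autoref{lem:ElementaryforpZ1} and \autoref{lem:ElementaryforpZ2}, run without assuming the QE. The automatic validity of condition (2), the transfinite closure in Step 1, and Step 2 are fine.

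The gap is in Step 1. You claim that $M\cap p^{\mathbb{Z}}$ is pure in $(p^{\mathbb{Z}})^{\mathcal{U}}$, or equivalently that $v_p(M)$ is pure in the value group. This is false for $\pi$-closed subfields, so you cannot exclude $d$ algebraic over $M$. For instance, let $d\in(p^{\mathbb{Z}})^{\mathcal{U}}$ be nonstandard and $M=\mathbb{Q}(d^2)$. Every $\sum_i q_i d^{2i}$ with $q_i\in\mathbb{Q}$ not all zero has a dominant term, because $2(i-j)v_p(d)\notin\mathbb{Z}$. Hence $M$ is closed under $\pi$ and $\mathrm{id}_M\in\mathcal{J}$. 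Yet $M\cap p^{\mathbb{Z}}=\{p^kd^{2j}\mid k,j\in\mathbb{Z}\}$ contains $d^2$ but not $d$. With $b=d$, Step 1 must adjoin an element of $T\cap p^{\mathbb{Z}}$ that is algebraic over $M$. \autoref{lem:ValueofAlgebraicElement} only shows that roots of elements of $M$ are algebraic over $M$, not that they lie in $M$. That would require $M$ to be relatively algebraically closed, which $\mathcal{J}$ does not impose.

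The repair is short.

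\begin{itemize}
\item Let $n\in\{1,2,\dots\}\cup\{\infty\}$ be the order of $d$ modulo $M\cap p^{\mathbb{Z}}$, and choose $e$ as you do. The formulas $x^k=m$ with $m\in M\cap p^{\mathbb{Z}}$ belong to the ordered-group type. So $e$ has the same order $n$ modulo $f(M)\cap p^{\mathbb{Z}}=f(M\cap p^{\mathbb{Z}})$, and $e^n=f(d^n)$ if $n<\infty$.
\item Suppose $v_p(a_id^i)-v_p(a_jd^j)=l\in\mathbb{Z}$ for some $0\le i<j<n$ and nonzero $a_i,a_j\in M$. Then $d^{j-i}=\pi(a_i)\pi(a_j)^{-1}p^{-l}\in M$, contradicting the choice of $n$.
\item Hence every $\sum_{i<n}a_id^i$ with coefficients not all zero has a dominant term, and in particular is nonzero. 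So $X^n-d^n$ is the minimal polynomial of $d$ over $M$, and likewise for $e$ over $f(M)$.
\item Therefore $d\mapsto e$ gives a field isomorphism $M(d)\to f(M)(e)$. Your dominant-monomial verification for $\pi$ and for the $P_k$ then goes through unchanged.
\end{itemize}

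Alternatively, you could pass first to the relative algebraic closure, as in Claim 1 of \autoref{lem:ElementaryforpZ1}. But extending $f$ by a field automorphism $\sigma$ then only gives $\sigma(\pi(s))=\zeta\,\pi(\sigma(s))$ for some root of unity $\zeta$. You would need an extra argument with $P_{p-1}$ (or $P_2$ when $p=2$) to force $\zeta=1$.
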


In addition to quantifier elimination, these structures exhibit stability-theoretic tameness:

\begin{thm}[{Dolich, Goodrick, and Lippel \cite{Dolich2011}}]\label{thm:QpDpminimal}
    $(\mathbb{Q}_p;+,\cdot)$ is dp-minimal.
\end{thm}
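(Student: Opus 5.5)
This is the cited theorem of Dolich, Goodrick, and Lippel, and I would prove it through Simon's characterization of dp-minimality. A theory is dp-minimal if and only if the following holds for every indiscernible sequence $(\overline{a_k})_{k\in I}$ and every singleton $b$: there is a cut $(I_1,I_2)$ of $I$ such that $(\overline{a_k})_{k\in I_1}$ and $(\overline{a_k})_{k\in I_2}$ are each indiscernible over $b$. Work in a monster model of $\mathrm{Th}(\mathbb{Q}_p;+,\cdot)$ and fix such a sequence and $b$. The first step is to reduce to one-variable formulas. By Macintyre's quantifier elimination in $\left\{ 0,1,+,-,\cdot,(P_n)_{n\geq 2} \right\}$, together with Denef's one-variable cell decomposition (which follows from it), every formula $\varphi(x,\overline{y})$ is equivalent, uniformly in $\overline{y}$, to a finite Boolean combination of conditions of two kinds:
\begin{itemize}
\item $v(x-c(\overline{y}))\ \square\ v(d(\overline{y}))$ with $\square\in\{<,=\}$;
\item $x-c(\overline{y})\in \mu P_n$ with $\mu$ a positive integer;
\end{itemize}
where $c,d$ range over finitely many $\emptyset$-definable (Skolem) functions. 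Enlarging each $\overline{a_k}$ by these definable values, which preserves indiscernibility, I may therefore assume every relevant condition has the form above with $c_k=c(\overline{a_k})$ and $d_k=d(\overline{a_k})$. It is enough to find a single cut that works for finitely many such conditions at once; the general case then follows by compactness and the usual argument that the exceptional points form a small set.

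The second step is the ultrametric combinatorics of an indiscernible sequence of centres $(c_k)$. By indiscernibility, $v(c_i-c_j)$ for $i<j$ is one of three things: constant, strictly increasing in $j$ (a pseudo-Cauchy shape), or strictly decreasing in $i$. I would show that the function $k\mapsto v(b-c_k)$ is governed by the position of $b$ in this tree of balls. Apart from at most one exceptional index region, $v(b-c_k)$ equals $\min(v(b-c_{k'}),v(c_k-c_{k'}))$ for a suitable $k'$. Hence it is either constant or equal to $v(c_k-c_{k'})$ on each side of one cut, and comparisons with $v(d_k)$ change truth value at most once. This uses only the strong triangle inequality and indiscernibility.

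The third step, which I expect to be the main obstacle, is the $P_n$-coset conditions. Here I would use \autoref{fct:PropertiesOfPn}. Fix $n$ and take its constant $m$. Off the cut, write $b-c_k=(b-c_{k'})+(c_{k'}-c_k)$. Whenever the two summands have valuations differing by at least $m$, the coset $(b-c_k)P_n$ equals the coset of the dominant summand. That coset is either independent of $k$ or equal to $(c_{k'}-c_k)P_n$, and the latter is constant on each side by indiscernibility. The delicate part is the indices where the valuation gap is less than $m$. Since $v$ is strictly monotone off the constant case, only a bounded stretch of valuation values is involved, so I would argue that these indices cluster at the single cut from the second step. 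The constant-valuation case requires a separate residue-field argument: there the cosets are controlled by finitely many angular-component-like invariants, and indiscernibility over a finite residue field makes them eventually constant on each side. Combining the cuts for the finitely many conditions, which must be shown to coincide or be absorbed into one, gives the required single cut and hence dp-minimality.
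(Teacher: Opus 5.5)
The paper does not prove this statement. It quotes it from Dolich, Goodrick and Lippel and uses it only as an input, via \autoref{rmk:DistalImplications}, for the distality of $(\mathbb{Q}_p;+,\cdot)$. Your outline therefore has to stand on its own, and it stops exactly where the content of dp-minimality lies.

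\textbf{The gap.} Your second and third steps analyse each atomic condition separately and show that each changes truth value only boundedly often along the sequence. That is essentially NIP-level information. Dp-minimality, in the cut formulation you use, is the assertion that \emph{all} the finitely many conditions (with different centres $c^{(\ell)}_k$, radii $d^{(\ell)}_k$ and exponents $n$) change at one common cut. You explicitly defer this (``must be shown to coincide or be absorbed into one'').

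The problem already arises for a single condition $v(b-c_k)<v(d_k)$. Your analysis gives the cut $\kappa$ of $b$ in the tree of the $c_k$. It also gives, on the side where $v(b-c_k)$ is a constant $\beta^*$, a second change point where the monotone sequence $v(d_k)$ passes $\beta^*$. Nothing you say places this second point at $\kappa$, so ``change truth value at most once'' is not established. Two centre sequences $(c_k)$, $(c'_k)$ likewise produce two a priori unrelated cuts of $b$.

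\textbf{How to close it.} The missing step can be supplied with the ultrametric input you already use, but it has to be a joint argument.
\begin{itemize}
\item Every valuation condition is a Boolean combination of memberships $b\in D_k$ in balls $D_k=\{x : v(x-c_k)\,\square\,v(d_k)\}$, with $\square\in\{\ge,>\}$, definable from $\overline{a_k}$.
\item Two balls are either disjoint or nested. Hence an indiscernible family $(D_k)$ is constant, pairwise disjoint, or a strict chain, and $\{k : b\in D_k\}$ is trivial, at most one point, or an initial or final segment.
\item Suppose two shrinking chains $(D_k)$, $(D'_k)$ had interior cuts $\kappa<\kappa'$. Taking $\kappa<k<k'<\kappa'$ gives $D'_{k'}\not\subseteq D_k$, so $D'_l\not\subseteq D_j$ for all $j<l$ by indiscernibility. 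For $j<\kappa$ and $j<l<\kappa'$, the balls $D_j$ and $D'_l$ both contain $b$, so $D_j\subsetneq D'_l$, and by indiscernibility this holds for all $j<l$. Choosing $j<\kappa<\kappa'<l$ then gives $b\in D'_l$, a contradiction.
\item The remaining combinations (growing chains, mixed chains, disjoint families) are handled the same way.
\end{itemize}
Your dominance argument with \autoref{fct:PropertiesOfPn} then shows that the coset of $b-c_k$ modulo $P_n$ is constant on each side of the cut of $b$ in the chain $B_k=\{x : v(x-c_k)\ge v(c_k-c_{k^+})\}$, except at the point of that cut. Here $k^+$ is any later index (any earlier one in the reversed case), and $B_k$ does not depend on that choice. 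The same coincidence argument applies to $(B_k)$.

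\textbf{Smaller points.}
\begin{itemize}
\item Your trichotomy has the indices swapped. For $i<j$, $v(c_i-c_j)$ either depends only on $i$ and increases with it, or depends only on $j$ and decreases with it. ``Increasing in $j$'' would violate the ultrametric inequality.
\item The constant-distance case cannot occur for distinct $c_k$, since the residue field is $\mathbb{F}_p$. So no residue-field argument is needed there.
\item The cut criterion must allow one index at the cut to be discarded, e.g.\ when $b=c_{k_0}$. As you state it, with a genuine partition of $I$, it fails even for dense linear orders.
\end{itemize}
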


\begin{thm}[{Mariaule \cite{Mariaule2018}}]\label{thm:NIPofQpwithpZ}
    $(\mathbb{Q}_p;+,\cdot,p^\mathbb{Z})$ has NIP.
\end{thm}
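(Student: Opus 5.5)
This statement is cited from Mariaule's work. Within the present paper it will in fact follow from the distality of $(\mathbb{Q}_p;+,\cdot,p^\mathbb{Z})$ proved below, since distal theories have NIP by \autoref{rmk:DistalImplications}. Still, I sketch a direct route by which one could establish it, using the quantifier elimination of \autoref{thm:QEforQpZ}.

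The plan is to show that no formula has the independence property. By \autoref{thm:QEforQpZ}, every formula $\varphi(x,\overline{y})$ in one free variable $x$ is equivalent to a Boolean combination of atomic formulas in $\left\{ 0,1,+,-,\cdot,(P_{n})_{n\geq 2},\pi \right\}$. Since NIP formulas are closed under Boolean combinations, and it suffices to check formulas with $x$ a singleton, it is enough to treat atomic formulas $P_n(t(x,\overline{y}))$ and $t(x,\overline{y}) = 0$, where $t$ is a term built from ring operations and $\pi$.

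The key step is a normal form for such terms: a term-separation statement in the spirit of \autoref{lem:TermSeparationforZR}. I would argue by induction on $t$ that, on each piece of a definable partition of the $x$-line, $\pi(t(x,\overline{b}))$ equals $\pi(c)\cdot\pi(x-d)^{q}$ for suitable parameters $c,d$ computed from $\overline{b}$ and rational $q$. This uses that $v_p$ of a polynomial in $x$ is piecewise affine in the valuations $v_p(x-d_i)$ on swiss-cheese pieces. It also uses \autoref{fct:PropertiesOfPn} and \autoref{lem:ValueofAlgebraicElement} to control residues modulo $P_n$ and the roots involved.

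Once terms are in this form, each atomic formula becomes a Boolean combination of two kinds of conditions. The first kind is $\mathbb{Q}_p$-field formulas in $x$ with parameters, which are NIP by \autoref{thm:QpDpminimal}. The second kind is formulas about $v_p(x-d)$ in the value group $\mathbb{Z}$ with Presburger structure, which are NIP by \autoref{thm:NIPofPA}. The main obstacle is the uniformity of this decomposition as $\overline{b}$ varies: the number of pieces and the exponents must be bounded independently of the parameters. I would obtain this by compactness on the induction, working in a monster model and noting that each step introduces only boundedly many new parameters definable from the old ones.
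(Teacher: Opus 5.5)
Your opening remark is correct, and it is essentially all the paper offers here. The theorem is quoted from Mariaule, and it also follows from the paper's later theorem that $(\mathbb{Q}_p;+,\cdot,p^\mathbb{Z})$ is distal, via \autoref{rmk:DistalImplications}. That distality proof, through \autoref{thm:DistalityCriterion}, \autoref{lem:ElementaryforpZ1}, \autoref{lem:ElementaryforpZ2} and \autoref{lem:TermSeparationpR}, uses Mariaule's quantifier elimination but not this NIP statement, so there is no circularity.

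The direct route you sketch, however, has a genuine gap, and its endpoint is in fact false. Since $v_p$ is $\emptyset$-definable in $(\mathbb{Q}_p;+,\cdot)$, every Presburger condition on $v_p(x-d)$ with parameters is already a field formula. Your reduction would therefore show that every subset of the line definable with parameters in $(\mathbb{Q}_p;+,\cdot,p^\mathbb{Z})$ is definable in the pure field, i.e.\ that the expansion is P-minimal. Yet the atomic formula $\pi(x)=x$ defines $p^\mathbb{Z}$, which is infinite with empty interior, whereas in $(\mathbb{Q}_p;+,\cdot)$ every infinite definable subset of the line has nonempty interior. The step that fails is the passage from data about $\pi$ of subterms to the atomic formulas. $\pi$ returns a field element, a section of the valuation, which re-enters the ring operations. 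So a formula like $x=\pi(x)$ depends on the unit $x/\pi(x)$, and no valuation data decides it.

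The normal form for $\pi$ of terms also fails once $\pi$ is nested. Take $t(x)=x+\pi(x)$ with no parameters. The set $\{x : v_p(x+\pi(x))>v_p(x)\}$ is the disjoint union of the balls $B_k=-p^k+p^{k+1}\mathbb{Z}_p$ for $k\in\mathbb{Z}$. On $B_k$ the value $v_p(t(x))=v_p(x+p^k)$ takes infinitely many values.

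Now fix finitely many centers $d$. They lie in only finitely many $B_k$, and on every other $B_k$ each $v_p(x-d)$ is constant, since $v_p(x-d)=v_p(d+p^k)$ there. Hence finitely many pieces, each with $\pi(t(x))=\pi(c)\pi(x-d)^q$, give only finitely many values of $v_p(t(x))$ on such a $B_k$, a contradiction. The true center is $-\pi(x)$, which moves with $x$. Nested occurrences of $\pi$ produce centers depending on $\pi$ of earlier subterms, so no finite decomposition with centers computed from $\overline{b}$ exists, and compactness cannot supply one. (Also, $\pi(x-d)^q$ is not even defined for non-integral $q$.)

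This is the real difficulty in getting NIP for the expansion directly. The paper avoids it by never seeking a uniform decomposition. Instead it works along an indiscernible sequence: \autoref{lem:DecompositionofValuation} uses distality of $\mathbb{Q}_p$ to rewrite $v_p\bigl(\sum_i u_i(\overline{a_k})r_i(b)\bigr)$ as $v_p(u(\overline{a_k})r(b))$ on a truncation.
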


In fact, Mariaule gives a stronger result that its expansion by all analytic functions on $\mathbb{Z}_p$ has NIP as well.

For an almost sparse sequence $R$, we let $p^R = \left\{ p^r \mid r \in R \right\}$.
The main result of this chapter is the distality of $(\mathbb{Q}_p;+,\cdot,p^\mathbb{Z})$ and $(\mathbb{Q}_p;+,\cdot,p^\mathbb{Z},p^R)$.
The former not only strengthens Mariaule's result, but also provides a $p$-adic analogy to the distality of $(\mathbb{R};+,\cdot,2^\mathbb{Z})$ shown by Hieronymi and Nell \cite{Hieronymi2017}.
$(\mathbb{Q}_p;+,\cdot,p^\mathbb{Z},p^R)$ was not previously known to posseess even NIP; as we shall see, it provides an example of an NIP expansion of the $p$-adic field without the rationality of the Poincar\'e series.

The remainder of this chapter is organized as follows.
In \autoref{sec:QRQZ}, we establish two lemmas corresponding to conditions (2) and (3) of \autoref{thm:DistalityCriterion} for $(\mathbb{Q}_p;+,\cdot,p^\mathbb{Z})$.
\autoref{sec:QRBaF} is devoted to verifying these conditions for $(\mathbb{Q}_p;+,\cdot,p^\mathbb{Z},p^R)$ via back-and-forth argument.
Variable separation for both $(\mathbb{Q}_p;+,\cdot,p^\mathbb{Z})$ and $(\mathbb{Q}_p;+,\cdot,p^\mathbb{Z},p^R)$ is established in \autoref{sec:QRTerm}.
These results culminate in the proof of distality in \autoref{sec:QRDistal}, followed by a discussion on the non-rationality of the Poincar\'e series for $(\mathbb{Q}_p;+,\cdot,p^\mathbb{Z},p^R)$.
\subsection{Elementary behavior of \texorpdfstring{$(\mathbb{Q}_p;+,\cdot,p^\mathbb{Z})$}{(Qp;+,×,pZ)}}
\label{sec:QRQZ}

Based on Mariaule's quantifier elimination result, we show conditions (2) and (3) of \autoref{thm:DistalityCriterion} for $(\mathbb{Q}_p;+,\cdot,p^\mathbb{Z})$.

\begin{ntt}
    $a <_v b$ if and only if $v_p(a) < v_p(b)$ for $a, b \in \mathbb{Q}_p$.
\end{ntt}

Note that the relation $<_v$ is $\emptyset$-definable in $(\mathbb{Q}_p;+,\cdot)$ because the $p$-adic valuation is $\emptyset$-definable.
There is an isomorphism $(\mathbb{Z};<,+) \cong (p^\mathbb{Z};<_v,\cdot)$ given by $z \mapsto p^z$.
Let $\mathcal{U}$ be a monster model of $(\mathbb{Q}_p;+,\cdot,p^\mathbb{Z})$.

\begin{dfn}
    Let $K$ be a subfield of $U$.
    For a polynomial $P(X_0,\dots,X_{d-1})$ over $K$ and $b_0,\dots,b_{d-1} \in U$, we say that $a_I b_0^{m_0} \dots b_{d-1}^{m_{d-1}}$ is the \emph{dominant term} of $P(b_0,\dots,b_{d-1})$ if:
    \begin{equation*}
        v_p(a_I b_0^{m_0} \dots b_{d-1}^{m_{d-1}}) < v_p(a_{I'} b_0^{n_0} \dots b_{d-1}^{n_{d-1}}) - l
    \end{equation*}
    for any other term $a_{I'} b_0^{n_0} \dots b_{d-1}^{n_{d-1}}$ and any positive integer $l$.
\end{dfn}

\begin{lem}\label{lem:DominantTermProperties}
    In the setting above, we have:
    \begin{itemize}
        \item $v_p(P(b_0,\dots,b_{d-1})) = v_p(a_I b_0^{m_0} \dots b_{d-1}^{m_{d-1}})$.
        \item $\pi(P(b_0,\dots,b_{d-1})) = \pi(a_I b_0^{m_0} \dots b_{d-1}^{m_{d-1}})$.
        \item $P(b_0,\dots,b_{d_1}) / P_n = a_I b_0^{m_0} \dots b_{d-1}^{m_{d-1}} / P_n$ for any $n \geq 2$.
    \end{itemize}
\end{lem}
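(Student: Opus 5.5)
Write $D = a_I b_0^{m_0}\cdots b_{d-1}^{m_{d-1}}$ for the dominant term and $E = P(b_0,\dots,b_{d-1}) - D$ for the sum of the remaining (finitely many) terms; then $P(b_0,\dots,b_{d-1}) = D + E$. All three items reduce to a single estimate: $v_p(E) > v_p(D) + l$ for every positive integer $l$, i.e. $E$ is infinitesimally smaller than $D$ in valuation. This follows from the ultrametric inequality, since $v_p(E) \geq \min$ of the valuations of the other terms, each of which exceeds $v_p(D)+l$ for every $l$ by the definition of dominant term. If $E = 0$ the estimate holds trivially. Note also that $D \neq 0$, since otherwise the defining inequality could not hold (valuation $\infty$ cannot be strictly smaller than anything); this is needed for the valuation statements to be meaningful.

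For the first item, take $l=1$: then $v_p(E) > v_p(D)$, so $v_p(D+E) = v_p(D)$ by the strict ultrametric equality. The second item is then immediate. In $\mathcal{U}$, $\pi$ is characterised (as in the remark after \autoref{dfn:Pi}, transferred to the monster model by elementarity) as the unique element of $p^{\mathbb{Z}}$ with the same valuation as a nonzero argument. Since $P(b_0,\dots,b_{d-1})$ and $D$ are nonzero with equal valuation, they have the same image under $\pi$.

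For the third item, fix $n \geq 2$ and let $m$ be the positive integer supplied by \autoref{fct:PropertiesOfPn}. The statement of that fact for this fixed $n$ and $m$ is first-order, namely
\[
\forall x \neq 0\,\forall \varepsilon\,\bigl(v_p(\varepsilon) \geq v_p(x)+m \rightarrow (x+\varepsilon)x^{-1} \in P_n\bigr),
\]
with $v_p(\varepsilon)\ge v_p(x)+m$ expressed as $v_p(\varepsilon) \geq v_p(p^m x)$. Hence it holds in $\mathcal{U}$. Applying it with $x = D$, $\varepsilon = E$ and $l = m$ gives $P(b_0,\dots,b_{d-1})/P_n = D/P_n$.

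The only point needing care is this transfer of \autoref{fct:PropertiesOfPn} from $\mathbb{Q}_p$ to the monster model. It works because $m$ is a standard integer for each fixed $n$, which is exactly why the definition of dominant term demands the gap for every positive integer $l$: a gap that is merely positive, possibly infinitesimal, would not suffice. Everything else is routine ultrametric reasoning.
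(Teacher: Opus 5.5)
Your proof is correct and follows the same route as the paper: the ultrametric inequality gives the first item, the implication $v_p(x)=v_p(y)\Rightarrow\pi(x)=\pi(y)$ gives the second, and \autoref{fct:PropertiesOfPn} applied with $x$ the dominant term and $\varepsilon$ the remainder gives the third. You make explicit what the paper leaves implicit: for fixed $n$ that fact is a first-order sentence with a standard $m$, so it transfers to $\mathcal{U}$, and this is exactly why the definition of dominant term demands the gap for every positive integer $l$.
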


\begin{proof}
    The first claim is easy.
    The second follows from the fact that $v_p(x) = v_p(y) \Rightarrow \pi(x) = \pi(y)$.
    The third is a consequence of \autoref{fct:PropertiesOfPn}.
\end{proof}

The next lemma is an immediate corollary of \cite[Theorem 6.4]{Mariaule2018}.
Nevertheless, we provide a proof for the reader's convenience.
It also demonstrates a method how to deal with polynomials relying on their dominant term, which is used later again.

\begin{lem}\label{lem:ElementaryforpZ1}
    Let $K \subseteq U$ be a subfield such that $\pi(K) \subseteq K$, and let $b,c \in (p^\mathbb{Z})^\mathcal{U}$.
    If
    \begin{equation}\label{eq:TypesinpZ}\tag{*}
        \mathrm{tp}^{((p^\mathbb{Z})^\mathcal{U};<_v,\cdot)}(b / K \cap (p^\mathbb{Z})^\mathcal{U}) = \mathrm{tp}^{((p^\mathbb{Z})^\mathcal{U};<_v,\cdot)}(c / K \cap (p^\mathbb{Z})^\mathcal{U}),
    \end{equation}
    then $\mathrm{tp}^\mathcal{U}(b / K) = \mathrm{tp}^\mathcal{U}(c / K)$.
\end{lem}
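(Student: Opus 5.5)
We will use Mariaule's quantifier elimination (\autoref{thm:QEforQpZ}) in the language $\left\{ 0,1,+,-,\cdot,(P_n)_{n\geq 2},\pi \right\}$. The statement then reduces to showing that the map $f$ fixing $K$ pointwise and sending $b \mapsto c$ extends to an isomorphism of the generated substructures. This isomorphism must respect the predicates $P_n$ and commute with $\pi$. The natural candidate substructure is $K(b)$ together with everything obtained by applying $\pi$, so we first analyse $\pi$ on $K(b)$.

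We split into two cases. If $b \in K$, then \eqref{eq:TypesinpZ} gives $c = b$ and there is nothing to prove. So assume $b \notin K$. Then $b$ is transcendental over $K$: otherwise \autoref{lem:ValueofAlgebraicElement} gives $n v_p(b) = v_p(a)$ for some $a \in K$. This yields $b^n = \pi(a)u$ with $u$ a unit of $(p^\mathbb{Z})^\mathcal{U}$. Since $(p^\mathbb{Z})^\mathcal{U}$ has $v_p$ injective, $b^n = \pi(a) \in K$, so $b$ is $\emptyset$-definable over $K \cap (p^\mathbb{Z})^\mathcal{U}$ in $((p^\mathbb{Z})^\mathcal{U};<_v,\cdot)$. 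Indeed, it is the unique $n$-th root there, which exists and is unique because this structure is a model of Presburger arithmetic. By \eqref{eq:TypesinpZ}, we then get $c = b \in K$, a contradiction. The same argument shows $c$ is transcendental over $K$, so $f$ extends to a field isomorphism $K(b) \to K(c)$.

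The key step is the dominant-term analysis. Write $\Gamma = K \cap (p^\mathbb{Z})^\mathcal{U}$; it is a pure subgroup of the $\mathbb{Z}$-group $(p^\mathbb{Z})^\mathcal{U}$, by the argument above. Since $b \notin \Gamma$ and $\Gamma$ is closed under division in the group, the values $v_p(a_i b^i)$ for distinct $i$ are pairwise distinct. We claim more: by \eqref{eq:TypesinpZ} together with the Presburger QE (\autoref{thm:QEofPA}), they differ by more than any standard integer. If $v_p(a_i b^i) - v_p(a_j b^j) = l$ were standard, then $b^{i-j} = p^{\pm l}\pi(a_j/a_i)$ would lie in $\Gamma$, and purity would put $b \in \Gamma$. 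Hence every polynomial $P(b) \in K[b]$ has a dominant term in the sense defined above.

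By \autoref{lem:DominantTermProperties}, $\pi(P(b)) = \pi(a_i)b^i \in K(b)$, and $P(b)/P_n = a_i b^i / P_n$. The same holds for $c$ with the same index $i$, because which term is minimal is decided by $<_v$-comparisons of $\pi(a_i)b^i$ over $\Gamma$, and these are preserved by \eqref{eq:TypesinpZ}. Extending to quotients, $K(b)$ is closed under $\pi$ and $f$ commutes with $\pi$.

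It remains to check $P_n$. We need $a_i b^i \in P_n \iff a_i c^i \in P_n$. Write $a_i = \pi(a_i)\cdot u$ with $u$ of value $0$, so that $a_i b^i = u\cdot(\pi(a_i)b^i)$. By \autoref{fct:PropertiesOfPn}, the coset of $u$ modulo $P_n$ is represented by an integer $k$; thus membership in $P_n$ reduces to whether $k\pi(a_i)b^i$ lies in $P_n$. This is equivalent to a condition of the form $\pi(a_i)b^i \in k^{-1}P_n$, which we want to express in terms of divisibility in the value group. Since $p^{\mathbb{Z}}$-elements with $n$-divisible exponent are $n$-th powers, and the coset of $p^z$ mod $P_n$ depends only on $z \bmod n$, the condition becomes a congruence condition on $\pi(a_i)b^i$ in $((p^\mathbb{Z})^\mathcal{U};\cdot)$. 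That condition is preserved by \eqref{eq:TypesinpZ}. I expect this residue bookkeeping, together with justifying the ``non-standard gap'' claim, to be the main technical obstacle. Once it is done, $f$ is a partial isomorphism in Mariaule's language, and QE yields $\mathrm{tp}^\mathcal{U}(b/K) = \mathrm{tp}^\mathcal{U}(c/K)$.
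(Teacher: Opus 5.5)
Your strategy is the paper's: Mariaule's QE, existence of dominant terms in $K[b]$, transfer of dominance through $<_v$-formulas over $\Gamma$, and $P_n$-classes via residues modulo $n$. However, the step that makes dominant terms exist rests on a false claim. The set $\Gamma=K\cap(p^\mathbb{Z})^\mathcal{U}$ need not be pure, and $b\notin K$ does not force $b$ to be transcendental over $K$.

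For a counterexample, take $d\in(p^\mathbb{Z})^\mathcal{U}$ with nonstandard valuation and let $K=\mathbb{Q}(d^2)$. Then $d$ is transcendental over $\mathbb{Q}$. Distinct monomials $q_i d^{2i}$ have valuations differing by nonstandard amounts, so $K$ is closed under $\pi$. Yet $d^2\in\Gamma$ while $d\notin K$.

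Your ``argument above'' proves only something weaker: an element of $(p^\mathbb{Z})^\mathcal{U}$ that is algebraic over $K$ has some nonzero power in $\Gamma$. So in the algebraic case your derivation gives $c^n=b^n$, hence $c=b$, but it does not give $b\in K$. There is no contradiction; that case is simply finished because $c=b$. For the same reason, the sentence ``purity would put $b\in\Gamma$'' (and the earlier ``$\Gamma$ is closed under division'') is unjustified as written.

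The repair is short and is exactly the paper's Claim 2. Once the case $c=b$ is disposed of, $b$ (and symmetrically $c$) is transcendental over $K$. Then $b^{i-j}=p^{\pm l}\pi(a_j/a_i)\in K$ with $i\neq j$ is impossible for every $l\in\mathbb{Z}$, including $l=0$. So the ``non-standard gap'' you flag as the main obstacle is immediate, and no purity is needed.

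Alternatively, follow the paper and first replace $K$ by its relative algebraic closure $K^{ac}$ in $U$. It is still closed under $\pi$, and \eqref{eq:TypesinpZ} persists, since every element of $K^{ac}\cap(p^\mathbb{Z})^\mathcal{U}$ is the unique $n$-th root of an element of $\Gamma$. For $K^{ac}$ the purity you invoke does hold. Your direct treatment of the algebraic case (showing $c=b$) is a legitimate substitute for this reduction once stated correctly.

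With this fix the rest of your argument is sound. Your $P_n$ bookkeeping is correct but longer than necessary. As in the paper, $bp^{-j}$ and $cp^{-j}$ are $n$-th powers in $(p^\mathbb{Z})^\mathcal{U}$ for the same $j<n$. Hence $b/P_n=c/P_n$, and so $a_ib^i/P_n=a_ic^i/P_n$ directly.
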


\begin{proof}
    We may assume that $b,c \notin K$.

    \setcounter{clm}{0}

    \begin{clm}
        We may assume that $K$ is algebraically closed in $U$, and thus that $b$ and $c$ are transcendental over $K$.
    \end{clm}

    \begin{prooc}
        Let $K^{ac}$ be the algebraic closure of $K$ in $U$.
        For any $s \in K^{ac}$, there exist $t \in K$ and $n \geq 1$ such that $nv_p(s) = v_p(t)$ by \autoref{lem:ValueofAlgebraicElement}.
        Since $\pi$ is a multiplicative homomorphism such that $v_p(x) = v_p(y) \Rightarrow \pi(x) = \pi(y)$, It follows that $\pi(s)^n = \pi(t) \in K$.
        Thus, $K^{ac}$ is closed under $\pi$.
        Moreover, this shows that $K^{ac} \cap (p^\mathbb{Z})^\mathcal{U}$ is contained in the definable closure of $K \cap (p^\mathbb{Z})^\mathcal{U}$ within $((p^\mathbb{Z})^\mathcal{U};<_v,\cdot)$.
        Therefore, it follows:
        \begin{equation*}
            \mathrm{tp}^{((p^\mathbb{Z})^\mathcal{U};<_v,\cdot)}(b / K^{ac} \cap (p^\mathbb{Z})^\mathcal{U}) = \mathrm{tp}^{((p^\mathbb{Z})^\mathcal{U};<_v,\cdot)}(c / K^{ac} \cap (p^\mathbb{Z})^\mathcal{U}).
        \end{equation*}
        Thus, we may replace $K$ with $K^{ac}$.
    \end{prooc}

    \begin{clm}
        For any nonzero polynomial $P(X) = \sum_{i=0}^d a_i X^i \in K[X]$, $P(b)$ has a dominant term $a_{i_0} b^{i_0}$.
        Furthermore, $a_{i_0} c^{i_0}$ is the dominant term of $P(c)$.
    \end{clm}
    
    \begin{prooc}
        It is impossible for $i < j$ and $z \in \mathbb{Z}$ to satisfy $v_p(a_i b^i) = v_p(a_j b^j) + z < \infty$, since otherwise $b^{j-i} = \pi(b)^{j-i} = \frac{\pi(a_i)}{\pi(p^z a_j)} \in K$, which contradicts the transcendence of $b$.
        Thus, $P(b)$ has a dominant term $a_{i_0} b^{i_0}$.
        For any other term $a_i b^i$ and any $l$, we have:
        \begin{align*}
            v_p(a_{i_0} b^{i_0}) < v_p(a_i b^i) - l &\iff \pi(a_{i_0}) b^{i_0} <_v \pi(a_i) b^i p^{-l}\\
            &\iff \pi(a_{i_0}) c^{i_0} <_v \pi(a_i) c^i p^{-l}\\
            &\iff v_p(a_{i_0} c^{i_0}) < v_p(a_i c^i) - l,
        \end{align*}
        where the second equivalence is by \eqref{eq:TypesinpZ}.
        Hence, $a_{i_0} c^{i_0}$ is the dominant term of $P(c)$.
    \end{prooc}

    \begin{clm}
        Let $f:K[b] \to K[c]$ be the ring isomorphism sending $b$ to $c$.
        Then $K[b]$ is closed under $\pi$, and $f$ preserves $P_n$ for all $n \geq 2$ and commutes with $\pi$.
        Thus, $f$ is a partial $\left\{ +,\cdot,p^\mathbb{Z} \right\}$-elementary embedding by \autoref{thm:QEforQpZ}.
    \end{clm}

    \begin{prooc}
        For any nonzero $P(b) \in K[b]$, let $a_{i_0} b^{i_0}$ be its dominant term.
        By \autoref{lem:DominantTermProperties}, $\pi(P(b)) = \pi(a_{i_0}) b^{i_0} \in K[b]$, and $f(\pi(P(b))) = f(\pi(a_{i_0}) b^{i_0}) = \pi(a_{i_0}) c^{i_0} = \pi(P(c)) = \pi(f(P(b)))$.
        We furthermore prove that for any $n \geq 2$, $P(b) / P_n = f(P(b)) / P_n$, or equivalently $a_{i_0} b^{i_0} / P_n = a_{i_0} c^{i_0} / P_n$.
        By \eqref{eq:TypesinpZ}, for each $n$, there exists some $0 \leq i < n$ such that both $b p^{-i}$ and $c p^{-i}$ are $n$-th powers of some elements in $(p^\mathbb{Z})^\mathcal{U}$.
        $b p^{-i}$ and $c p^{-i}$ are then in $P_n$, which implies $b / P_n = c / P_n$.
    \end{prooc}
\end{proof}

\begin{lem}\label{lem:ElementaryforpZ2}
    Let $K \subseteq U$ be a subfield such that $\pi(K) \subseteq K$, and let $b,c \in U$.
    If
    \begin{equation*}
        \mathrm{tp}^{(U;+,\cdot)}(b / K) = \mathrm{tp}^{(U;+,\cdot)}(c / K)
    \end{equation*}
    and $\pi(x) \in K$ for all $x \in K(b)$, then $\mathrm{tp}^\mathcal{U}(b / K) = \mathrm{tp}^\mathcal{U}(c / K)$.
\end{lem}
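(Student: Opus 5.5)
The plan is to mimic the proof of \autoref{lem:ElementaryforpZ1}. We build a field isomorphism that commutes with $\pi$ and preserves the predicates $P_n$, and then conclude with Mariaule's quantifier elimination, \autoref{thm:QEforQpZ}.

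First I obtain the map. Since $\mathrm{tp}^{(U;+,\cdot)}(b/K) = \mathrm{tp}^{(U;+,\cdot)}(c/K)$, there is a partial $\{+,\cdot\}$-elementary map fixing $K$ and sending $b$ to $c$. It extends uniquely to a field isomorphism $f : K(b) \to K(c)$; in the algebraic case one uses the minimal polynomial, which is part of the type. Being $\{+,\cdot\}$-elementary, $f$ preserves each $P_n$, because $P_n$ is $\emptyset$-definable in the field language. This is the content of Macintyre's quantifier elimination, but only elementarity in the field reduct is needed here.

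Next I check that $f$ commutes with $\pi$. By hypothesis $\pi(K(b)) \subseteq K$, so $K(b)$ is closed under $\pi$. Fix $x \in K(b)^\times$ and put $e = \pi(x) \in K$. Then $v_p(x/e) = 0$, and this statement is expressible in the field language, since the valuation ring is $\emptyset$-definable. Hence $v_p(f(x)/e) = 0$, because $f$ fixes $e \in K$ and is elementary. As $\pi(e) = e$ and $e \in (p^\mathbb{Z})^\mathcal{U}$, we get $\pi(f(x)) = e = f(\pi(x))$. For $x = 0$ the identity holds trivially.

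It follows that $f(K(b)) = K(c)$ is also closed under $\pi$, with values in $K$. So $f$ is an isomorphism between substructures in the language $\{0,1,+,-,\cdot,(P_n)_{n\geq2},\pi\}$. By \autoref{thm:QEforQpZ}, $f$ is partial elementary in $\mathcal{U}$, and therefore $\mathrm{tp}^\mathcal{U}(b/K) = \mathrm{tp}^\mathcal{U}(c/K)$.

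The only delicate point is the step where $f$ commutes with $\pi$. It rests on the observation that, once $\pi(x)$ is known to lie in the fixed field $K$, the identity $\pi(x) = e$ becomes a field-language statement about $x$ with parameter $e$. No dominant-term analysis is needed, in contrast with \autoref{lem:ElementaryforpZ1}.
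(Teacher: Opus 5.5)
Your proposal is correct and follows essentially the same route as the paper: take the field-elementary isomorphism $f\colon K(b)\to K(c)$ over $K$, which preserves each $P_n$, then use $\pi(x)\in K$ together with the $\emptyset$-definability of the valuation to get $\pi(f(x))=\pi(x)=f(\pi(x))$, and conclude by Mariaule's quantifier elimination. You also check two small points that the paper leaves implicit, namely the case $x=0$ and the closure of $K(c)$ under $\pi$.
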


\begin{proof}
    The assumption provides an field isomorphism $f:K(b) \to K(c)$ over $K$ sending $b$ to $c$, that is elementary in $(U;+,\cdot)$.
    $f$ preserves $P_n$ for all $n \geq 2$ since it is $\emptyset$-definable in the field language.
    For any $x \in K(b)$, we have $v_p(x) = v_p(\pi(x))$, and hence $v_p(f(x)) = v_p(f(\pi(x)))$ ($v_p$ is also $\emptyset$-definable in the field language).
    Since $\pi(x) \in K$, it follows $f(\pi(x)) = \pi(x)$.
    These implies $\pi(f(x)) = \pi(x) = f(\pi(x))$.
    Thus, $f$ is elementary in the language $\left\{ +,\cdot,p^\mathbb{Z} \right\}$ by \autoref{thm:QEforQpZ}.
\end{proof}
\subsection{Back-and-forth for \texorpdfstring{$(\mathbb{Q}_p;+,\cdot,p^\mathbb{Z},p^R)$}{(Qp;+,×,pZ,pR)}}
\label{sec:QRBaF}

Let $p^R = \left\{ p^r \in \mathbb{Q}_p \mid r \in R \right\}$.
In this section, we establish a back-and-forth system for the structure $(\mathbb{Q}_p;+,\cdot,p^\mathbb{Z},p^R)$.
We extend the functions $\lambda$, $S$, and $S^{-1}$ to $\mathbb{Q}_p$ as follows:

\begin{dfn}
    We define $\lambda$, $S$, and $S^{-1}$ on $p^\mathbb{Z}$ such that $(p^\mathbb{Z};<_v,\cdot,\lambda,S,S^{-1})$ is isomorphic to $(\mathbb{Z};<,+,\lambda,S,S^{-1})$.
    For $x \in \mathbb{Q}_p \setminus p^\mathbb{Z}$, we set $\lambda(x) = \lambda(\pi(x))$, $S(x) = S(\pi(x))$, and $S^{-1}(x) = S^{-1}(\pi(x))$.
    These functions are $\emptyset$-definable in $(\mathbb{Q}_p;+,\cdot,p^\mathbb{Z},p^R)$.
\end{dfn}

Let $\mathcal{U}$ be a monster model of $(\mathbb{Q}_p;+,\cdot,p^\mathbb{Z},p^R)$.
Then $((p^\mathbb{Z})^\mathcal{U};<_v,\cdot,(p^R)^\mathcal{U})$ is a monster model of $(\mathbb{Z};<,+,R)$.
Recall that an operator is a term of the form $A(x) = \sum_{i=-n}^m z_i S^i(x)$ sending $r \in R$ to $A(r) \in \mathbb{Z}$.
Since we adopt multiplicative notation in $(p^\mathbb{Z};<_v,\cdot,p^R)$, we write $A^*(x) = \prod_{i=-n}^m S^i(x)^{z_i}$ for an operator $A$.
To utilize the dominant term method, we introduce the notion of \emph{degree}.

\begin{dfn}
    The \emph{degree} of $R$, denoted by $\mathrm{deg}(R)$, is the smallest integer $d$ such that there exists an operator $A(x) = \sum_{i=0}^d z_i S^i(x)$ with $z_i \neq 0$ for some $0 \leq i \leq d$ and $A =_{ae} 0$.
    (Note that $z_0$ and $z_d$ must be nonzero because of the minimality of $d$.)
    If no such $d$ exists, we set $\mathrm{deg}(R) = \infty$.
\end{dfn}

As noted in \autoref{sec:ZRIntro}, there exists $1 < \theta \leq \infty$ such that $\lim_{n\to \infty} \frac{r_{n+1}}{r_n} = \theta$.
It follows from Lambotte and Point \cite[Lemma 1.10 to Lemma 1.13]{Lambotte2020} that $\mathrm{deg}(R) = d < \infty$ if and only if $\theta$ is an algebraic number of degree $d$.
We do not give a proof as it is not relevant for this paper.

We fix $d = \mathrm{deg}(R)$ if $\mathrm{deg}(R)$ is finite, or an arbitrary positive integer if otherwise.

\begin{lem}\label{lem:DominantTerm}
    Let $K \subseteq U$ be a subfield closed under $\pi$, $\lambda$, $S$, and $S^{-1}$, and let $b \in (p^R)^\mathcal{U} \setminus K$.
    Then for any nonzero polynomial $P \in K[X_0,\dots,X_{d-1}]$, $P(b,\dots,S^{d-1}(b))$ has a dominant term.
    In particular, $b,\dots,S^{d-1}(b)$ are algebraically independent over $K$.
\end{lem}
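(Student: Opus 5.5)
Write $P = \sum_I a_I X_0^{m_0}\cdots X_{d-1}^{m_{d-1}}$ with $a_I \in K$ nonzero, so each term evaluated at $\overline{b} = (b,\dots,S^{d-1}(b))$ is $a_I\, b^{m_0}\cdots S^{d-1}(b)^{m_{d-1}}$. The plan is to reduce everything to the valuation of such monomials and then use the structure of $(p^\mathbb{Z};<_v,\cdot,p^R)\cong(\mathbb{Z};<,+,R)$ from \autoref{chp:ZR}.

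Since $K$ is closed under $\pi$, $v_p(a_I) = v_p(\pi(a_I))$ with $\pi(a_I) \in K \cap (p^\mathbb{Z})^\mathcal{U}$. Passing to additive notation via the isomorphism, $M := $ the image of $K \cap (p^\mathbb{Z})^\mathcal{U}$ is a subgroup of the monster model of $(\mathbb{Z};<,+,R)$ containing $\mathbb{Z}$ (as $p \in K$) and closed under $\lambda,S,S^{-1}$. Let $\beta \in R^{\mathcal{U}}\setminus M$ correspond to $b$. The valuation of the term indexed by $I$ is then $\alpha_I + A_I(\beta)$, where $\alpha_I = v_p(a_I) \in M$ and $A_I(x) = \sum_{i<d} m_i S^i(x)$ is an operator.

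Two distinct terms $I \neq I'$ fail to be separated by every standard integer $l$ exactly when $|(\alpha_I - \alpha_{I'}) + (A_I - A_{I'})(\beta)|$ is standard. By \autoref{lem:ElementInROutsideOfM} (2), this absolute value is $\approx \beta$ or $\approx |\alpha_I - \alpha_{I'}|$ (or is simply $|\alpha_I-\alpha_{I'}|$ when $A_I - A_{I'} =_{ae} 0$). Since $\beta$ is nonstandard ($\beta \notin M \supseteq \mathbb{Z}$), the case $\approx \beta$ is impossible. By \autoref{lem:ElementInROutsideOfM} (3), the alternative $\approx |\alpha|$ with $A \neq_{ae} 0$ gives $|\alpha| \gg \beta$, also nonstandard. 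So we must have $A_I - A_{I'} =_{ae} 0$. This is where $d$ enters. $A_I - A_{I'} = \sum_{i<d}(m_i - n_i)S^i(x)$ has degree less than $d$ and is nonzero as a formal operator, so by the minimality in the definition of $\mathrm{deg}(R)$ it is not $=_{ae} 0$. (When $\deg(R)=\infty$, no nonzero operator is $=_{ae}0$ at all.) Hence any two distinct terms have valuations differing by a nonstandard amount.

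It follows that the valuations of the finitely many terms are pairwise separated by more than any standard $l$. The term of least valuation is then the dominant term, which proves the first assertion.

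For algebraic independence, if $P(\overline{b}) = 0$ for a nonzero $P$, then by \autoref{lem:DominantTermProperties} $v_p(P(\overline{b}))$ equals the valuation of the dominant term, which is finite because $a_I \ne 0$ and $b \neq 0$. This is a contradiction.

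The main obstacle is the bookkeeping in the middle step: checking that \autoref{lem:ElementInROutsideOfM} applies, which needs $M \supseteq \mathbb{Z}$ and closure under $\lambda,S,S^{-1}$ transported through $\pi$, and handling the degenerate case $A_I - A_{I'} =_{ae} 0$, which is excluded exactly by the choice of $d$.
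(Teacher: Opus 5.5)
Your proof is correct and is essentially the paper's argument. Both pass to the value group and use the minimality of $d=\mathrm{deg}(R)$ to get $A_I - A_{I'} \neq_{ae} 0$ for distinct monomials, then derive a contradiction from where $\beta$ sits under $\approx$ relative to elements of $K$; the only difference is that the paper absorbs $\pi(a_I^{-1}a_{I'})p^l$ into $K$ to get $A^*(b) \in K \cap (p^\mathbb{Z})^\mathcal{U}$ and applies \autoref{lem:ApproxAndLL} (1) with \autoref{lem:ElementInROutsideOfM} (1) directly, whereas you keep $\alpha_I - \alpha_{I'}$ separate and route through parts (2) and (3) of \autoref{lem:ElementInROutsideOfM}.
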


\begin{proof}
    Assume for contradiction that two distinct nonzero terms $a_I X_0^{m_0} \dots X_{d-1}^{m_{d-1}}$ and $a_{I'} X_0^{n_0} \dots X_{d-1}^{n_{d-1}}$ satisfy
    \begin{equation*}
        v_p(a_I b^{m_0} \dots S^{d-1}(b)^{m_{d-1}}) = v_p(a_{I'} b^{n_0} \dots S^{d-1}(b)^{n_{d-1}}) + l
    \end{equation*}
    for some $l \in \mathbb{Z}$.
    Then it holds that
    \begin{gather*}
        \pi(a_I) b^{m_0} \dots S^{d-1}(b)^{m_{d-1}} = \pi(a_{I'}) b^{n_0} \dots S^{d-1}(b)^{n_{d-1}} p^l,\\
        \text{and hence that }b^{m_0-n_0} \dots S^{d-1}(b)^{m_{d-1}-n_{d-1}} = \pi(a_I^{-1} a_{I'}) p^l.
    \end{gather*}
    Let $A(x) = \sum_{i=0}^{d-1} (m_i-n_i) S^i(x)$.
    By the definition of degree, $A \neq_{ae} 0$, so $A^*(b) \approx b$ or $A^*(b)^{-1} \approx b$ by \autoref{lem:ApproxAndLL}.
    However, $A^*(b) = \pi(a_I^{-1} a_{I'}) p^l \in K \cap (p^\mathbb{Z})^\mathcal{U}$, contradicting \autoref{lem:ElementInROutsideOfM} (1).
\end{proof}

In contrast, for any $z \in \mathbb{Z}$, $S^z(b)$ is algebraic over $K(b,\dots,S^{d-1}(b))$:

\begin{lem}\label{lem:DisEnough}
    If $d = \mathrm{deg}(R)$ is finite, then for each $z \in \mathbb{Z}$, there exist $m \geq 1$ and $w_i \in \mathbb{Z}$ for $0 \leq i < d$ such that $m S^z(x) =_{ae} \sum_{i=0}^{d-1} w_i S^i(x)$.
\end{lem}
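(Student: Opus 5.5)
We argue by linear algebra over $\mathbb{Q}$ on the $\mathbb{Z}$-module of operators modulo $=_{ae}$. Let $d=\mathrm{deg}(R)$ be finite and fix a witness $D(x)=\sum_{i=0}^d e_i S^i(x)=_{ae}0$ with $e_i\in\mathbb{Z}$ and $e_0,e_d\neq 0$; the remark after the definition of degree shows both end coefficients are nonzero. For $z\in\mathbb{Z}$, let $V$ be the set of operators $\sum_i w_i S^i(x)$ with $0\le i\le d-1$. We must show $mS^z(x)$ is $=_{ae}$-equal to a member of $V$ for some $m\geq1$. We will call an operator \emph{good} if some positive multiple of it is $=_{ae}$ to a member of $V$. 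Good operators are closed under integer linear combinations, since we can take a common multiple of the $m$'s. Every $S^i$ with $0\le i\le d-1$ is trivially good.

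First we treat $z\geq d$ by induction on $z$. By \autoref{rmk:BasicsofAlmostSparseSequence}, $D(S^{z-d}(x))=_{ae}0$, i.e.
\begin{equation*}
e_d S^z(x) =_{ae} -\sum_{i=0}^{d-1} e_i S^{z-d+i}(x).
\end{equation*}
Here one should check that $S^i(S^{j}(x))=S^{i+j}(x)$ for all but finitely many $x\in R$. This holds because $S^{-1}$ only misbehaves at $r_0$, and almost-everywhere statements ignore finitely many points. The right-hand side is an integer combination of $S^{j}$ with $z-d\le j<z$. These are good by the induction hypothesis, so $e_dS^z$ is good and hence $S^z$ is good: multiply by $|e_d|$, adjusting the sign into the $w_i$.

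Next we treat $z<0$ by downward induction. Applying the same remark with shift $z$, we get
\begin{equation*}
e_0 S^z(x) =_{ae} -\sum_{i=1}^{d} e_i S^{z+i}(x).
\end{equation*}
This expresses $e_0S^z$ through $S^{j}$ with $z<j\le z+d$. These are good, either by the inductive hypothesis or by the first case, possibly combined with the trivial range $0\le j\le d-1$. Since $e_0\neq 0$, $S^z$ is good.

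The main obstacle is not the algebra but justifying the shift identities $=_{ae}$. This uses \autoref{rmk:BasicsofAlmostSparseSequence} together with the composition rule for $S$ and $S^{-1}$ away from finitely many points. We must also confirm that $e_0\neq0$ and $e_d\neq0$ genuinely follow from the minimality of $d$. For $e_0$, this is argued by shifting: if $e_0=0$, then $D(S^{-1}(x))$ would witness a smaller degree. This step is needed for the negative-$z$ case.
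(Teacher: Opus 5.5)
Your proposal is correct and takes the same approach as the paper: upward induction on $z$ via $e_dS^{z}(x)=_{ae}-\sum_{i<d}e_iS^{z-d+i}(x)$ using $e_d\neq 0$, and downward induction for negative $z$ using $e_0\neq 0$. You also supply details the paper leaves implicit: closure of \emph{good} operators under integer combinations, the almost-everywhere composition rule for $S^{\pm1}$, and the shift argument that $e_0\neq0$. One small inaccuracy: in the negative case every index satisfies $j\le z+d\le d-1$, so the first case is never actually used there.
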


\begin{proof}
    Choose an operator $A(x) = \sum_{i=0}^d z_i S^i(x)$ with $z_0, z_d \neq 0$ and $A =_{ae} 0$.
    Then we have $z_d S^{d+k}(x) =_{ae} \sum_{i=0}^{d-1} -z_i S^{i+k}(x)$ for all $k \geq 0$.
    Thus, we may find desired $m$ and $w_i$ inductively for $d,d+1,\dots$, similarly for $-1,-2,\dots$.
\end{proof}

We now define our back-and-forth system.

\begin{dfn}\label{dfn:BackandForthSystemforQp}
    Let $L_{p^R} = \left\{ 0,1,+,-,\cdot,{}^{-1},p^\mathbb{Z},\pi,p^R,\lambda,S,S^{-1} \right\}$.
    $\mathcal{I}$ is the set of partial functions $f:\mathcal{U} \to \mathcal{U}$ such that:
    \begin{enumerate}
        \item $\mathrm{Dom}(f)$ is an $L_{p^R}$-substructure.
        \item $f$ is a partial $L_{p^R}$-embedding.
        \item $f$ is a partial elementary embedding of $(\mathbb{Q}_p;+,\cdot,p^\mathbb{Z})$.
        \item $f|_{\mathrm{Dom}(f)\cap (p^\mathbb{Z})^\mathcal{U}}$ is a partial elementary embedding of $\left((p^\mathbb{Z})^\mathcal{U};<_v,\cdot,(p^R)^\mathcal{U}\right)$.
    \end{enumerate}

    We also consider the following alternative conditions:
    \begin{enumerate}
        \item[2'] $a \in (p^R)^{\mathcal{U}} \Rightarrow f(a) \in (p^R)^{\mathcal{U}}$ for any $a \in \mathrm{Dom}(f)$.
        \item[4'] $f|_{\mathrm{Dom}(f)\cap (p^R)^\mathcal{U}}$ is a partial elementary embedding of $\left((p^R)^\mathcal{U};<_v,(\cdot \equiv_n i)_{n,i}\right)$.
    \end{enumerate}
\end{dfn}

\begin{lem}\label{lem:ConditionstobeinIforpR}
    If a partial function $f:\mathcal{U} \to \mathcal{U}$ satisfies conditions (1), (2'), (3), and (4'), then $f$ satisfies conditions (2) and (4), and hence $f \in \mathcal{I}$.
\end{lem}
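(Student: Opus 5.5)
The plan is to mirror the proof of \autoref{lem:ConditionstobeinI}, transported along the isomorphism $(\mathbb{Z};<,+,R) \cong ((p^\mathbb{Z})^\mathcal{U};<_v,\cdot,(p^R)^\mathcal{U})$. First I will establish the $L_{p^R}$-embedding property (2). Condition (3) already gives that $f$ is a partial $\{0,1,+,-,\cdot,{}^{-1},p^\mathbb{Z},\pi\}$-embedding, since $\pi$ and $p^\mathbb{Z}$ are $\emptyset$-definable in $(\mathbb{Q}_p;+,\cdot,p^\mathbb{Z})$. Hence $f$ preserves $<_v$ both ways and commutes with $\pi$. It remains to show three things: $f$ commutes with $\lambda$, $S$, $S^{-1}$, and $f(a)\in (p^R)^\mathcal{U}$ implies $a \in (p^R)^\mathcal{U}$.

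For $a\in\mathrm{Dom}(f)$, the map $\lambda(a)=\lambda(\pi(a))$, and $\pi(a)\in\mathrm{Dom}(f)$ by (1), with $f(\pi(a))=\pi(f(a))$. So it suffices to treat $a\in (p^\mathbb{Z})^\mathcal{U}$. If $a$ lies in the ``below $p^{r_0}$'' region, then $\lambda(a)=p^{r_0}$. This standard element is fixed by $f$: it is $\emptyset$-definable in the field with $p^\mathbb{Z}$, so (3) handles it. In the remaining case, $\lambda(a),S(a)\in\mathrm{Dom}(f)\cap(p^R)^\mathcal{U}$ by (1), and $\lambda(a)\le_v a<_v S(a)$ in the multiplicative order. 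By (2') both images lie in $(p^R)^\mathcal{U}$, and by (3) the order is preserved. Since $S(a)$ is the $<_v$-successor of $\lambda(a)$ within $(p^R)^\mathcal{U}$, (4') transfers this: $f(S(a))$ is the successor of $f(\lambda(a))$ in $(p^R)^\mathcal{U}$. The value $f(a)$ sits between them, so $\lambda(f(a))=f(\lambda(a))$ and $S(f(a))=f(S(a))$. The same argument with the predecessor gives $S^{-1}$. Finally, if $f(a)\in(p^R)^\mathcal{U}$, then $f(a)=\lambda(f(a))=f(\lambda(a))$, so $a=\lambda(a)\in(p^R)^\mathcal{U}$ by injectivity. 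This yields (2).

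The main step, and the expected obstacle, is (4). Here I must upgrade the separate elementarity in $((p^\mathbb{Z})^\mathcal{U};<_v,\cdot)$ (from (3)) and in $((p^R)^\mathcal{U};<_v,(\cdot\equiv_n i))$ (from (4')) to elementarity in $((p^\mathbb{Z})^\mathcal{U};<_v,\cdot,(p^R)^\mathcal{U})$. Take $g=f|_{\mathrm{Dom}(f)\cap(p^\mathbb{Z})^\mathcal{U}}$ and pull it back along $z\mapsto p^z$ to a partial map on a monster model of $(\mathbb{Z};<,+,R)$. I will then check that $g$ satisfies conditions (1), (2'), (3), (4) of \autoref{dfn:BackandForthSystemforZ}, and apply \autoref{lem:ConditionstobeinI} and \autoref{thm:PEEinR}:
\begin{itemize}
\item Condition (1), that the domain $\mathrm{Dom}(f)\cap(p^\mathbb{Z})^\mathcal{U}$ is closed under multiplication, inverse, $\lambda$, $S$, $S^{-1}$, and contains the standard part $p^\mathbb{Z}$, follows from (1) for $f$, since $p$ is a constant term and the $\lambda$-images lie in $p^\mathbb{Z}$.
\item Condition (2') is exactly (2') for $f$.
\item Condition (3) holds because $(p^\mathbb{Z};<_v,\cdot)$ is $\emptyset$-interpretable in $(\mathbb{Q}_p;+,\cdot,p^\mathbb{Z})$ and $f$ is elementary there.
\item Condition (4) is (4'), once the residue predicates are matched: $x\equiv_n i$ in the additive picture corresponds to $xp^{-i}$ being an $n$-th power within $p^\mathbb{Z}$.
\end{itemize}
The subtle points are checking that these translations are genuinely $\emptyset$-definable in the relevant reducts, and that the standard part is fixed pointwise. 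Once they are checked, \autoref{thm:PEEinR} gives that $g$ is partial elementary in $(\mathbb{Z};<,+,R)$, which is (4).
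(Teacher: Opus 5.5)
Your proposal is correct and follows essentially the same route as the paper. You obtain (4) by checking that $f|_{\mathrm{Dom}(f)\cap(p^\mathbb{Z})^\mathcal{U}}$ satisfies the hypotheses of \autoref{lem:ConditionstobeinI} in the interpreted copy of $(\mathbb{Z};<,+,R)$ and then invoking \autoref{thm:PEEinR}. The only difference is ordering: the paper deduces (2) from (4), using $\lambda=\lambda\circ\pi$ and similarly for $S$, $S^{-1}$, whereas your separate direct proof of (2) is valid but redundant, since it re-runs the argument of \autoref{lem:ConditionstobeinI} inside $\mathcal{U}$.
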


\begin{proof}
    To establish (4), we observe that $f|_{\mathrm{Dom}(f)\cap (p^\mathbb{Z})^\mathcal{U}}$ belongs to the back-and-forth system defined in \autoref{dfn:BackandForthSystemforZ} and apply \autoref{thm:PEEinR}. It is straightforward.
    By (4), $f|_{\mathrm{Dom}(f)\cap (p^\mathbb{Z})^\mathcal{U}}$ preserves $p^R$ and commutes with $\lambda$, $S$, and $S^{-1}$.
    Since $p^R \subseteq p^\mathbb{Z}$ and the functions $\lambda$, $S$, and $S^{-1}$ on $U$ are defined via $\pi$ (e.g., $\lambda = \lambda \circ \pi$), the map $f$ also satisfies these conditions on its entire domain.
    Thus, (2) is satisfied.
\end{proof}

Since $f^{-1} \in \mathcal{I}$ if $f \in \mathcal{I}$, it suffices to establish the `forth' direction.
As in \autoref{sec:ZRBaF}, we first treat the case where the added element is in $(p^R)^\mathcal{U}$.

\begin{lem}\label{lem:ExtendabilityforpR}
    Let $f \in \mathcal{I}$ with $\mathrm{Dom}(f)$ denoted by $K$.
    For any $b \in (p^R)^\mathcal{U} \setminus K$, there exists $c \in (p^R)^\mathcal{U}$ such that
    \begin{equation*}
        \mathrm{tp}^{((p^R)^\mathcal{U};<_v,(\cdot \equiv_n i)_{n,i})}(b, K\cap(p^R)^\mathcal{U}) = \mathrm{tp}^{((p^R)^\mathcal{U};<_v,(\cdot \equiv_n i)_{n,i})}(c, f(K)\cap(p^R)^\mathcal{U}).
    \end{equation*}
    Furthermore, for any such $c$, there exists an extension $\widetilde{f} \in \mathcal{I}$ of $f$ such that $\widetilde{f}(b) = c$.
\end{lem}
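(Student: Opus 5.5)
Existence of $c$ comes first and is routine. By condition (4) of \autoref{dfn:BackandForthSystemforQp} together with \autoref{thm:PEEinR}, the restriction $f|_{K\cap(p^R)^\mathcal{U}}$ is partial elementary in $((p^R)^\mathcal{U};<_v,(\cdot\equiv_n i)_{n,i})$. This structure is interpretable in $\mathcal{U}$ and hence saturated (\autoref{rmk:Monster}), so such a $c$ exists.

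For the extension, I would follow the pattern of \autoref{lem:ExtendabilityforR}, working in two layers.

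\textbf{Group layer.} Let $M=K\cap(p^\mathbb{Z})^\mathcal{U}$. This is a subgroup of the monster model $((p^\mathbb{Z})^\mathcal{U};<_v,\cdot,(p^R)^\mathcal{U})$ of $(\mathbb{Z};<,+,R)$, and it is closed under $\lambda,S,S^{-1}$. Apply \autoref{lem:ExtendabilityforR} to $f|_M$, which lies in the system of \autoref{dfn:BackandForthSystemforZ} by condition (4). This gives an extension $g$ on $\widetilde{M}=\{A^*(b)a\mid a\in M\}$ with $g(A^*(b)a)=A^*(c)f(a)$, and $g$ is partial elementary in $(p^\mathbb{Z};<_v,\cdot,p^R)$ by \autoref{thm:PEEinR}.

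\textbf{Field layer.} Let $\widetilde{K}=K(S^z(b)\mid z\in\mathbb{Z})$. By \autoref{lem:DominantTerm}, the elements $b,\dots,S^{d-1}(b)$ are algebraically independent over $K$, and by \autoref{lem:DisEnough} every $S^z(b)$ is algebraic over them. Rather than this transcendence-basis view, I would argue directly with polynomials. Every element of $K[S^z(b)\mid z]$ is $P(\overline{S(b)})$ for a polynomial $P$ over $K$. Collect the monomials of $P$ according to which element of $\widetilde M$ they equal, i.e.\ group terms $a_I\prod S^{i}(b)^{m_i}$ by the value of the operator $A=\sum m_i S^i$ modulo $=_{ae}$. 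Monomials with $A=_{ae}A'$ satisfy $A^*(b)=A'^*(b)$ exactly, so they merge, and $P(\overline{S(b)})=\sum_j \kappa_j A_j^*(b)$ with $\kappa_j\in K$ and pairwise $\neq_{ae}$ operators $A_j$.

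The argument of \autoref{lem:DominantTerm} then shows that, for distinct $j$, the valuations of the terms $\kappa_jA_j^*(b)$ differ by non-standard amounts: $A_j-A_{j'}\neq_{ae}0$ forces $(A_j-A_{j'})^*(b)\notin K$ and $\approx b^{\pm1}$, while $\pi(\kappa_{j'}/\kappa_j)\in M$, so by \autoref{lem:ElementInROutsideOfM} the comparison is decided by $\ll$. Hence there is a dominant term. The same reasoning shows that $\sum_j\kappa_jA_j^*(b)=0$ only if all $\kappa_j=0$, so $F\colon \sum\kappa_jA_j^*(b)\mapsto\sum f(\kappa_j)A_j^*(c)$ is a well-defined ring isomorphism onto $f(K)[S^z(c)]$. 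It extends to fraction fields.

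Because $g$ is order-preserving for $<_v$ on $\widetilde M$, the term that dominates is the same on both sides. Using \autoref{lem:DominantTermProperties}, $F$ then commutes with $\pi$, giving $\pi(P)=\pi(\kappa_{j_0})A_{j_0}^*(b)\in\widetilde M$, and $F$ preserves each $P_n$. For the $P_n$ claim we need $A^*(b)/P_n=A^*(c)/P_n$ and $\kappa/P_n=f(\kappa)/P_n$. The latter holds by condition (3). For the former, use that $S^z(b)\equiv_n S^z(c)$ by the choice of $c$, so $A^*(b)A^*(c)^{-1}$ is an $n$-th power in $p^\mathbb{Z}$, as in the end of \autoref{lem:ElementaryforpZ1}. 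Therefore $\widetilde{K}$ is closed under $\pi$, and its image under $\pi$ is exactly $\widetilde M$. By \autoref{thm:QEforQpZ}, $F$ is partial elementary in $(\mathbb{Q}_p;+,\cdot,p^\mathbb{Z})$, which gives condition (3).

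Next, $\widetilde K\cap(p^\mathbb{Z})^\mathcal{U}=\widetilde M$, because every element of $p^\mathbb{Z}$ is fixed by $\pi$. So $\widetilde K$ is closed under $\lambda,S,S^{-1}$, which is condition (1); this uses \autoref{lem:ElementInROutsideOfM}(4) and the fact that these functions factor through $\pi$. Conditions (2') and (4') follow from \autoref{lem:ElementInROutsideOfM}(5), since $\widetilde K\cap (p^R)^\mathcal{U}=(K\cap (p^R)^\mathcal{U})\cup\{S^z(b)\}$ lies in the definable closure of $b$ and $K\cap(p^R)^\mathcal{U}$. \autoref{lem:ConditionstobeinIforpR} then gives $\widetilde f\in\mathcal{I}$.

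The main obstacle is the well-definedness and injectivity of $F$ when $\deg(R)$ is finite, since the $S^z(b)$ then satisfy multiplicative relations $A^*(b)=1$ for $A=_{ae}0$. The grouping of monomials by $=_{ae}$-class of operators is designed to absorb exactly these relations, and it needs care to see that no additive relations remain.
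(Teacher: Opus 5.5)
Your proof is correct, but it builds the extension differently from the paper.

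\textbf{The paper's route.} It adjoins only $b,\dots,S^{d-1}(b)$. These are algebraically independent over $K$ by \autoref{lem:DominantTerm}, so the ring map onto $f(K)[c,\dots,S^{d-1}(c)]$ is well defined for free. After checking via dominant terms and \autoref{thm:QEforQpZ} that this map is elementary, the paper extends it to the relative algebraic closure $K(b,\dots,S^{d-1}(b))^{ac}$. It then needs three further ingredients:
\begin{itemize}
\item \autoref{lem:ValueofAlgebraicElement}, for closure under $\pi$;
\item \autoref{lem:DisEnough}, to see that each $S^z(b)$ lies in the domain;
\item torsion-freeness of $(p^\mathbb{Z})^\mathcal{U}$, to get $\widetilde f(S^z(b))=S^z(c)$.
\end{itemize}

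\textbf{Your route.} You adjoin all $S^z(b)$ at once and handle the multiplicative relations $A^*(b)=1$ (for $A=_{ae}0$) up front. The dominant-term argument shows that monomials from distinct $=_{ae}$-classes are linearly independent over $K$, and symmetrically over $f(K)$ on the $c$-side. This already settles the well-definedness and injectivity you flag as the main obstacle.

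\textbf{What each buys.} The paper gets well-definedness for free but must do more work afterwards. In your version, $\pi$ maps the domain $K(S^z(b)\mid z\in\mathbb{Z})$ onto exactly the group $\widetilde M$ of \autoref{rmk:DomainIsTerms}. Conditions (1), (2') and (4') then follow directly from \autoref{lem:ElementInROutsideOfM}(4),(5) and the properties of $g$. You need no use of $\mathrm{deg}(R)$, no \autoref{lem:DisEnough}, and no extension of elementary maps to algebraic closures. The domain you obtain is also the smallest possible one.

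Your version is also uniform in the degree. When $\mathrm{deg}(R)=\infty$, $S^{-1}(b)$ is transcendental over $K(b,S(b),S^2(b),\dots)$. So the domain in the paper's parenthetical remark must be enlarged by the negative shifts before it is closed under $S^{-1}$; your choice of domain does this automatically.
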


\begin{proof}
    Since $\mathcal{U}$ is a monster model, by \autoref{dfn:BackandForthSystemforQp} (4), there exists such $c \in (p^R)^\mathcal{U}$.
    \autoref{lem:ExtendabilityforR} and \autoref{thm:PEEinR} then imply:
    \begin{equation}\label{eq:TypeinpZ}\tag{**}
        \mathrm{tp}^{((p^\mathbb{Z})^\mathcal{U};<_v,\cdot,(p^R)^\mathcal{U})}(b, K\cap(p^\mathbb{Z})^\mathcal{U})
        = \mathrm{tp}^{((p^\mathbb{Z})^\mathcal{U};<_v,\cdot,(p^R)^\mathcal{U})}(c, f(K)\cap(p^\mathbb{Z})^\mathcal{U}).
    \end{equation}

    By \autoref{lem:DominantTerm}, $b,\dots,S^{d-1}(b)$ (resp. $c,\dots,S^{d-1}(c)$) are algebraically independent over $K$ (resp. $f(K)$).
    We extend $f$ to a ring isomorphism $f_1:K[b,\dots,S^{d-1}(b)] \to f(K)[c,\dots,S^{d-1}(c)]$ by defining $f_1(S^i(b)) = S^i(c)$ for each $0 \leq i \leq d-1$ (if $\mathrm{deg}(R) = \infty$, then $\mathrm{Dom}(f_1) = K[b,S(b),S^2(b),\dots]$).

    \setcounter{clm}{0}

    \begin{clm}
        $K[b,\dots,S^{d-1}(b)]$ is closed under $\pi$.
        Moreover, $f_1$ preserves $P_n$ for all $n \geq 2$ and commutes with $\pi$.
        Consequently, $f_1$ is a partial $\left\{ +,\cdot,p^\mathbb{Z} \right\}$-elementary embedding by \autoref{thm:QEforQpZ}.
    \end{clm}

    \begin{prooc}
        Consider any nonzero $a = P(b,\dots,S^{d-1}(b)) \in K[b,\dots,S^{d-1}(b)]$.
        Let $a_I b^{m_0} \dots S^{d-1}(b)^{m_{d-1}}$ be the dominant term of $a$, which exists by \autoref{lem:DominantTerm}.
        By arguing similarly to Claim 2 in \autoref{lem:ElementaryforpZ1} with \eqref{eq:TypeinpZ}, it follows that $f(a_I) c^{m_0} \dots S^{d-1}(c)^{m_{d-1}}$ is the dominant term of $f_1(a)$.
        Also, for any $n \geq 2$, there exists $k \in \mathbb{Z}$ such that $a_I / P_n = m / P_n$ by \autoref{fct:PropertiesOfPn}, and hence $f(a_I) / P_n = k / P_n = a_I / P_n$.
        As in Claim 3 of \autoref{lem:ElementaryforpZ1}, we can show that $K[b,\dots,S^{d-1}(b)]$ is closed under $\pi$, $f_1$ commutes with $\pi$, and that $f_1$ preserves $P_n$ for all $n \geq 2$.
    \end{prooc}
    
    By Claim 1, we extend $f_1$ to a partial $\left\{ +,\cdot,p^\mathbb{Z} \right\}$-elementary embedding $\widetilde{f}:K(b,\dots,S^{d-1}(b))^{ac} \to \mathcal{U}$ (algebraic closure is taken within $U$).

    \begin{clm}
        $\widetilde{f} \in \mathcal{I}$.
    \end{clm}

    \begin{prooc}
        \autoref{dfn:BackandForthSystemforQp} (3) is the definition of $\widetilde{f}$.
        We first show \autoref{dfn:BackandForthSystemforQp} (1); that is, $K(b,\dots,S^{d-1}(b))^{ac}$ is closed under $\pi,\lambda$, $S$, and $S^{-1}$.
        For any nonzero $a \in K(b,\dots,S^{d-1}(b))^{ac}$, there exists $n \geq 1$ and $x,y \in K[b,\dots,S^{d-1}(b)]$ such that $nv_p(a) = v_p(x / y)$ by Lemma \ref{lem:ValueofAlgebraicElement}.
        By \autoref{lem:DominantTermProperties}, $\pi(a)^n = \pi(x / y) = \pi(x) / \pi(y) = a_0 b^{z_0} \dots S^{d-1}(b)^{z_{d-1}}$ for some $a_0 \in K \cap (p^\mathbb{Z})^\mathcal{U}$ and $z_i \in \mathbb{Z}$.
        Thus, $\pi(a) \in K(b,\dots,S^{d-1}(b))^{ac}$.
        To show the assertion for $\lambda$, $S$, and $S^{-1}$, we may assume $a \in (p^\mathbb{Z})^\mathcal{U}$ and that $a >_v 1$.
        Then $a^n = a_0 b^{z_0} \dots S^{d-1}(b)^{z_{d-1}}$.
        By \autoref{lem:ApproxAndLL} (1), we have $a \approx a^n$.
        By \autoref{lem:ElementInROutsideOfM} (2), $a_0 b^{z_0} \dots S^{d-1}(b)^{z_{d-1}}$ approximately equals $a_0$ or $b$.
        Hence, either $a \approx a_0$ or $a \approx b$ holds.
        We see, for some $z$, that $\lambda(a)$ equals to either $S^z(a_0)$ or $S^z(b)$, both of which are in $K(b,\dots,S^{d-1}(b))^{ac}$ by \autoref{lem:DisEnough}.

        Next, we verify \autoref{dfn:BackandForthSystemforQp} (2') and (4').
        For any $a \in K(b,\dots,S^{d-1}(b))^{ac} \cap (p^R)^\mathcal{U}$, the argument above shows that $a$ is either in $K \cap (p^R)^\mathcal{U}$ or is of the form $S^z(b)$ for some $z \in \mathbb{Z}$.
        By \autoref{lem:DisEnough}, for any $z \in \mathbb{Z}$, there exists $m \geq 1$ $\widetilde{f}((S^z(b))^m) = (S^z(c))^m$ and hence $\widetilde{f}(S^z(b)) = S^z(c)$ (since $(p^\mathbb{Z})^\mathcal{U}$ is torsion-free).
        conditions (2') and (4') are satisfied by the same reasoning as Claim 2 in \autoref{lem:ExtendabilityforR}.
    \end{prooc}
\end{proof}

\begin{lem}\label{lem:SubLemmaforExtendabilityforAllinpR}
    For any $b \in \mathcal{U}$ and any $f \in \mathcal{I}$, there exists an extension $\widetilde{f} \in \mathcal{I}$ of $f$ such that for any $x \in \langle b, \mathrm{Dom}(\widetilde{f}) \rangle_{\left\{ +,-,\cdot,{}^{-1},\pi \right\}}$, we have $\lambda(x) \in \mathrm{Dom}(\widetilde{f})$.
\end{lem}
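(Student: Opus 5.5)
We mirror the proof of \autoref{lem:SubLemmaforExtendabilityforAll}, with \autoref{lem:ExtendabilityforpR} in place of \autoref{lem:ExtendabilityforR}. Set
\begin{equation*}
    T = \langle b, \mathrm{Dom}(f) \rangle_{\left\{ +,-,\cdot,{}^{-1},\pi,\lambda,S,S^{-1} \right\}},
\end{equation*}
and let $T^{ac}$ be the relative algebraic closure of $T$ in $U$. The extension $\widetilde{f}$ is built by applying \autoref{lem:ExtendabilityforpR} transfinitely many times. At each stage, if some element of $T^{ac} \cap (p^R)^\mathcal{U}$ is not yet in the domain, we adjoin it, choosing $c$ by saturation of $\left((p^R)^\mathcal{U};<_v,(\cdot\equiv_n i)_{n,i}\right)$. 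At limit stages we take unions. This preserves membership in $\mathcal{I}$, since conditions (1)--(4) of \autoref{dfn:BackandForthSystemforQp} are all of finite character. Because $T^{ac}$ is small, the process stops at some $\widetilde{f} \in \mathcal{I}$ with $T^{ac} \cap (p^R)^\mathcal{U} \subseteq \mathrm{Dom}(\widetilde{f})$.

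The key invariant is $\mathrm{Dom}(\widetilde{f}) \subseteq T^{ac}$. Each single step of \autoref{lem:ExtendabilityforpR} produces the domain $K(b',\dots,S^{d-1}(b'))^{ac}$ from the current domain $K$ and the adjoined element $b'$. So if $K \subseteq T^{ac}$ and $b' \in T^{ac}$, then $S^i(b') \in T$ because $T$ is closed under $S$, and the new domain again lies in $T^{ac}$. (When $\deg(R) = \infty$ the adjoined elements are $b',S(b'),S^2(b'),\dots$, which still lie in $T$.)

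From the invariant, every $x \in \langle b, \mathrm{Dom}(\widetilde{f}) \rangle_{\{+,-,\cdot,{}^{-1},\pi\}}$ lies in the field generated by $b$ and $T^{ac}$ together with $\pi$-values. It remains to see that $\lambda(x) \in T^{ac}$; then $\lambda(x) \in T^{ac}\cap(p^R)^\mathcal{U} \subseteq \mathrm{Dom}(\widetilde{f})$ and we are done.

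The main obstacle is that $T^{ac}$ need not be closed under $\pi$ and $\lambda$, so we must check that applying $\pi$, $\lambda$, $S$, $S^{-1}$ to elements of $T^{ac}$ does not leave $T^{ac}$. For $\pi$, we use \autoref{lem:ValueofAlgebraicElement}: for $a \in T^{ac}$ there are $n$ and $t \in T$ with $\pi(a)^n = \pi(t) \in T$, so $\pi(a) \in T^{ac}$. Hence $T^{ac}$ is closed under $\pi$ and $\langle b,T^{ac}\rangle_{\{+,-,\cdot,{}^{-1},\pi\}} = T^{ac}$.

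For $\lambda$, take $a \in T^{ac} \cap (p^\mathbb{Z})^\mathcal{U}$ with $a >_v 1$ and $a^n = t \in T \cap (p^\mathbb{Z})^\mathcal{U}$. By \autoref{lem:ApproxAndLL} (1) we have $a \approx a^n = t$, applied in the multiplicative copy of $(\mathbb{Z};<,+,R)$. Hence $\lambda(a) = S^z(t)$ for some $z \in \mathbb{Z}$, and this lies in $T$ because $T$ is closed under $S^{\pm1}$. The standard case, where $a$ has standard valuation, is trivial. Since $S = S\circ\lambda$ and $S^{-1} = S^{-1}\circ\lambda$, the same holds for $S$ and $S^{-1}$. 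Thus $T^{ac}$ is an $L_{p^R}$-substructure, which gives the required closure and completes the argument.
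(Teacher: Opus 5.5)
Your argument is correct, but it is organized differently from the paper's. The paper never analyses an algebraic closure. It builds an $\omega$-chain $f=f_0\subseteq f_1\subseteq\cdots$ in $\mathcal{I}$, obtaining $f_{n+1}$ by applying \autoref{lem:ExtendabilityforpR} transfinitely until $\mathrm{Dom}(f_{n+1})$ contains $\langle b,\mathrm{Dom}(f_n)\rangle_{\{+,-,\cdot,{}^{-1},\pi,\lambda,S,S^{-1}\}}\cap(p^R)^\mathcal{U}$, and then sets $\widetilde{f}=\bigcup_n f_n$. Any $x\in\langle b,\mathrm{Dom}(\widetilde{f})\rangle_{\{+,-,\cdot,{}^{-1},\pi\}}$ already lies in $\langle b,\mathrm{Dom}(f_n)\rangle_{\{+,-,\cdot,{}^{-1},\pi\}}$ for some $n$, so $\lambda(x)\in\mathrm{Dom}(f_{n+1})$. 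That argument uses only the statement of \autoref{lem:ExtendabilityforpR} plus finite character. It is designed to sidestep the fact that the domain $K(b',\dots,S^{d-1}(b'))^{ac}$ produced there escapes the term closure $T$.

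You instead keep one fixed target and prove that $T^{ac}$ is itself an $L_{p^R}$-substructure, indeed that $\lambda(T^{ac})\subseteq T$. You do this by recycling the computations of Claim~1 in \autoref{lem:ElementaryforpZ1} and Claim~2 in \autoref{lem:ExtendabilityforpR}. This is a closer analogue of \autoref{lem:SubLemmaforExtendabilityforAll} and gives the extra information $\mathrm{Dom}(\widetilde{f})\subseteq T^{ac}$. The price is that you depend on the exact domain constructed inside the proof of \autoref{lem:ExtendabilityforpR}, rather than on its statement, and on the additional closure computation.

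One point of presentation. In your invariant paragraph, ``$S^i(b')\in T$ because $T$ is closed under $S$'' presupposes $b'\in T$, whereas at that stage you only know $b'\in T^{ac}$. Your final paragraph repairs this: $b'=\lambda(b')\in\lambda(T^{ac})\subseteq T$, or more simply, $T^{ac}$ is closed under $S$. So you should establish the closure properties of $T^{ac}$ before verifying the invariant.
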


\begin{proof}
    Let $f_0 = f$.
    We construct an increasing sequence $(f_n)_{n\in\mathbb{N}}$ in $\mathcal{I}$ as follows:
    Suppose $f_n \in \mathcal{I}$ has been defined.
    We then extend $f_n$ to $f_{n+1} \in \mathcal{I}$ such that $\mathrm{Dom}(f_{n+1})$ contains all elements of $\langle b, \mathrm{Dom}(f_n) \rangle_{\left\{ +,-,\cdot,{}^{-1},\pi,\lambda,S,S^{-1} \right\}} \cap (p^R)^\mathcal{U}$.
    This is achieved by applying \autoref{lem:ExtendabilityforpR} transfinitely many times.

    We define $\widetilde{f} = \bigcup_{n<\omega} f_n \in \mathcal{I}$.
    For any $x \in \langle b, \mathrm{Dom}(\widetilde{f}) \rangle_{\left\{ +,-,\cdot,{}^{-1},\pi \right\}}$, there exists $n \geq 0$ such that $x \in \langle b, \mathrm{Dom}(f_n) \rangle_{\left\{ +,-,\cdot,{}^{-1},\pi \right\}}$.
    By our construction, $\lambda(x) \in \mathrm{Dom}(f_{n+1}) \subseteq \mathrm{Dom}(\widetilde{f})$, as desired.
\end{proof}

\begin{lem}\label{lem:ExtendabilityforAllinpR}
    Suppose $b \in \mathcal{U}$ and $f \in \mathcal{I}$ satisfy that $\lambda(x) \in \mathrm{Dom}(f)$ for any $x \in \langle b, \mathrm{Dom}(f) \rangle_{\left\{ +,-,\cdot,{}^{-1},\pi \right\}}$.
    Then there exists $c \in \mathcal{U}$ such that
    \begin{equation*}
        \mathrm{tp}^{\left( U;+,\cdot,(p^\mathbb{Z})^\mathcal{U} \right)} (b, \mathrm{Dom}(f)) = \mathrm{tp}^{\left( U;+,\cdot,(p^\mathbb{Z})^\mathcal{U} \right)} (c, \mathrm{Im}(f)).
    \end{equation*}
    Furthermore, for any such $c$, there exists an extension $\widetilde{f} \in \mathcal{I}$ of $f$ such that $\widetilde{f}(b) = c$.
\end{lem}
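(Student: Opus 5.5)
This is the analogue of \autoref{lem:ExtendabilityforAll}, and I would follow that proof closely. Let $K = \mathrm{Dom}(f)$. By \autoref{dfn:BackandForthSystemforQp} (3), $f$ is a partial elementary embedding of $(U;+,\cdot,(p^\mathbb{Z})^\mathcal{U})$. The reduct $(U;+,\cdot,(p^\mathbb{Z})^\mathcal{U})$ is interpretable in $\mathcal{U}$, so it is saturated (\autoref{rmk:Monster}). Realizing the $f$-image of $\mathrm{tp}^{(U;+,\cdot,(p^\mathbb{Z})^\mathcal{U})}(b/K)$ therefore gives $c$ with the required type equality.

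Now fix any such $c$. Let $K' = \langle b, K\rangle_{\{+,-,\cdot,{}^{-1},\pi\}}$. Since $f \cup \{(b,c)\}$ is partial elementary in $(U;+,\cdot,(p^\mathbb{Z})^\mathcal{U})$, it extends uniquely to a map $\widetilde{f}$ on $K'$. This is because each element of $K'$ is the value of a $\{+,-,\cdot,{}^{-1},\pi\}$-term at $b$ and elements of $K$, and $\pi$ is $\emptyset$-definable in $(U;+,\cdot,(p^\mathbb{Z})^\mathcal{U})$. The extension $\widetilde{f}$ is again partial elementary in that reduct, which gives \autoref{dfn:BackandForthSystemforQp} (3) for free.

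To conclude $\widetilde{f} \in \mathcal{I}$, I would invoke \autoref{lem:ConditionstobeinIforpR} and check conditions (1), (2') and (4').

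\textbf{Condition (1).} $K'$ is a subfield closed under $\pi$ by construction. By hypothesis, $\lambda(x) \in K \subseteq K'$ for every $x \in K'$. Since $S = S\circ\lambda$ and $S^{-1} = S^{-1}\circ\lambda$ on $p^\mathbb{Z}$, and $K$ is closed under $S, S^{-1}$, the field $K'$ is also closed under $S$ and $S^{-1}$. Hence $K'$ is an $L_{p^R}$-substructure.

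\textbf{The key observation.} We have $K' \cap (p^R)^\mathcal{U} = K \cap (p^R)^\mathcal{U}$, because any $x \in K' \cap (p^R)^\mathcal{U}$ satisfies $x = \lambda(x) \in K$. So on $p^R$ the map $\widetilde{f}$ coincides with $f$. Condition (2') and condition (4') then follow directly from conditions (2) and (4) for $f$. For (4'), the restriction of $f$ to $K \cap (p^R)^\mathcal{U}$ is elementary in $((p^R)^\mathcal{U};<_v,(\cdot\equiv_n i))$: this structure is $\emptyset$-definable in $((p^\mathbb{Z})^\mathcal{U};<_v,\cdot,(p^R)^\mathcal{U})$, with the residue predicates given by divisibility in the multiplicative group.

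\textbf{Expected obstacle.} The only delicate point is confirming that $\widetilde{f}$ is well defined and $\pi$-compatible on all of $K'$, rather than just on the field $K(b)$. Elementarity in the reduct containing $p^\mathbb{Z}$ handles this, since $\pi$ is definable there, so I expect no real difficulty. Unlike \autoref{lem:ElementaryforpZ2}, no appeal to \autoref{thm:QEforQpZ} is needed.
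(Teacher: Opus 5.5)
Your proposal is correct and follows the paper's proof essentially step for step: you realize the $f$-image of the type by saturation and extend $f$ in the natural way to $\langle b,\mathrm{Dom}(f)\rangle_{\{+,-,\cdot,{}^{-1},\pi\}}$. You then use the hypothesis to get closure under $\lambda$, $S$, $S^{-1}$ and $\mathrm{Dom}(\widetilde{f})\cap(p^R)^\mathcal{U}=\mathrm{Dom}(f)\cap(p^R)^\mathcal{U}$, from which (2') and (4') follow; you only spell out points the paper leaves implicit, such as the definability of the residue predicates and the appeal to \autoref{lem:ConditionstobeinIforpR}.
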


\begin{proof}
    The existence of such a $c$ follows from the saturation of $\mathcal{U}$ and \autoref{dfn:BackandForthSystemforQp} (3).
    This choice of $c$ allows us to extend $f$ to a partial $\left\{ +,\cdot,p^\mathbb{Z} \right\}$-elementary embedding $\widetilde{f}:\langle b, \mathrm{Dom}(f) \rangle_{\left\{ +,-,\cdot,{}^{-1},\pi \right\}} \to \mathcal{U}$ in the natural way.
    We now verify that $\widetilde{f} \in \mathcal{I}$.
    \autoref{dfn:BackandForthSystemforQp} (3) trivially holds.
    The assumption on $b$ and $f$ ensures \autoref{dfn:BackandForthSystemforQp} (1) and $\mathrm{Dom}(\widetilde{f}) \cap (p^R)^\mathcal{U} = \mathrm{Dom}(f) \cap (p^R)^\mathcal{U}$, which implies \autoref{dfn:BackandForthSystemforQp} (2') and (4').
\end{proof}

\begin{thm}\label{thm:PEEinpR}
    $\mathcal{I}$ forms a back-and-forth system in the language $\left\{ +,\cdot,p^\mathbb{Z},p^R \right\}$.
    Consequently, every $f \in \mathcal{I}$ is a partial $\left\{ +,\cdot,p^\mathbb{Z},p^R \right\}$-elementary embedding.
\end{thm}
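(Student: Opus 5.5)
The plan mirrors the proof of \autoref{thm:PEEinR}: show that $\mathcal{I}$ satisfies the forth condition and deduce the back condition by symmetry. The second assertion is then immediate from \autoref{thm:BackandForthSystem}, since every $f \in \mathcal{I}$ is in particular a partial $\left\{ +,\cdot,p^\mathbb{Z},p^R \right\}$-embedding by \autoref{dfn:BackandForthSystemforQp} (2).

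First, $\mathcal{I}$ is nonempty. I would take the identity map on the prime $L_{p^R}$-substructure, that is, the smallest substructure containing $\emptyset$, which consists of $\emptyset$-definable elements. This map trivially satisfies (1)--(4), since the identity on any subset is partial elementary in every reduct. Second, $\mathcal{I}$ is closed under inverses: each of conditions (1)--(4) is symmetric in domain and image, because the image of an $L_{p^R}$-embedding of a substructure is again a substructure, and the inverse of a partial elementary map is partial elementary. So the back condition for $f$ follows from the forth condition applied to $f^{-1}$.

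For the forth condition, let $f \in \mathcal{I}$ and $b \in U$. Apply \autoref{lem:SubLemmaforExtendabilityforAllinpR} to get an extension $f' \in \mathcal{I}$ such that $\lambda(x) \in \mathrm{Dom}(f')$ for every $x \in \langle b, \mathrm{Dom}(f') \rangle_{\left\{ +,-,\cdot,{}^{-1},\pi \right\}}$. This is exactly the hypothesis of \autoref{lem:ExtendabilityforAllinpR}, which gives $c$ and an extension $\widetilde{f} \in \mathcal{I}$ of $f'$, hence of $f$, with $b \in \mathrm{Dom}(\widetilde{f})$. The substance lies in those two lemmas, which in turn rest on \autoref{lem:ExtendabilityforpR}. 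The only point I would double-check is in the proof of \autoref{lem:SubLemmaforExtendabilityforAllinpR}: the union of an increasing chain, whether of length $\omega$ or transfinite, stays in $\mathcal{I}$. This holds because substructures, embeddings and partial elementarity in each reduct are all preserved under unions of chains. Smallness of domains is also preserved, since only small-many steps are taken.

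Finally, I would note that \autoref{thm:BackandForthSystem} is applied to the language $\left\{ +,\cdot,p^\mathbb{Z},p^R \right\}$. The maps in $\mathcal{I}$ are partial embeddings for this language, because they preserve $+$, $\cdot$, $p^\mathbb{Z}$ and $p^R$ in both directions by (2). So the standard induction on formulas goes through and yields partial elementarity. I expect no real obstacle here; the main work was already done in \autoref{lem:ExtendabilityforpR}.
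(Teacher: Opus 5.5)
Your proposal is correct and follows the paper's route exactly. Closure of $\mathcal{I}$ under inverses reduces the claim to the forth direction, which you obtain by applying \autoref{lem:SubLemmaforExtendabilityforAllinpR} and then \autoref{lem:ExtendabilityforAllinpR}, with \autoref{lem:ExtendabilityforpR} doing the real work; \autoref{thm:BackandForthSystem} then gives partial elementarity. Your extra checks — nonemptiness via the identity on the prime substructure, and unions of chains staying in $\mathcal{I}$ — are correct, and they fill in details the paper leaves implicit.
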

\subsection{Variable separation}
\label{sec:QRTerm}

In this section, we assume $\mathcal{U}$ is a monster model of either $(\mathbb{Q}_p;+,\cdot,p^\mathbb{Z})$ or $(\mathbb{Q}_p;+,\cdot,p^\mathbb{Z},p^R)$.
By `terms', we refer to $\left\{ +,-,\cdot,{}^{-1},\pi \right\}$-terms in the former case, and $\left\{ +,-,\cdot,{}^{-1},\pi,\lambda,S,S^{-1} \right\}$-terms in the latter.

Fix a sequence $(\overline{a_k})_{k\in I+(c)+J}$ and a singleton $b$ satisfying the assumptions in \autoref{dfn:Distality}, with $I$ and $J$ taken as sufficiently saturated dense linear orders without endpoints as noted in \autoref{rmk:SaturatedIndices}.
We keep following the terminology in \autoref{dfn:IndisParameter}.

\begin{lem}\label{lem:DecompositionofValuation}
    For any terms $(u_i(\overline{x}))_{1\leq i\leq m}$ and $(r_i(y))_{1\leq i\leq m}$, there exist a truncation $(I',J') \subset_{\mathrm{t}} (I,J)$ and terms $u(\overline{x})$ and $r(y)$ involving parameters outside $(I',J')$ such that, for all $k \in I'+(c)+J'$:
    \begin{equation*}
        v_p\left( \sum_{i=1}^m u_i(\overline{a_k}) r_i(b) \right) = v_p(u(\overline{a_k}) r(b)).
    \end{equation*}
\end{lem}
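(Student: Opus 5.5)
Since the statement only concerns valuations, I would run an induction on $m$ that exploits indiscernibility to compare the valuations of the summands. Write $T_i(k) = u_i(\overline{a_k}) r_i(b)$, discarding indices with $T_i(k)=0$. Zero-ness is constant in $k$ on $I+(c)+J$: indiscernibility over $b$ on $I+J$ makes it constant there, and distality is not needed for this, because the equation $u_i(\overline{a_k})=0$ does not involve $b$. The case $m=1$ is trivial. For the inductive step the key quantity is, for each pair $i<j$, the difference
\begin{equation*}
    v_p(T_i(k)) - v_p(T_j(k)) = v_p\bigl(u_i(\overline{a_k})/u_j(\overline{a_k})\bigr) - v_p\bigl(r_j(b)/r_i(b)\bigr).
\end{equation*}
Equivalently, we compare $\pi(u_i(\overline{a_k})u_j(\overline{a_k})^{-1})$ with $\pi(r_j(b)r_i(b)^{-1})$ in the ordered group $(\Gamma;<)\cong((p^\mathbb{Z})^\mathcal{U};<_v,\cdot)$, a model of Presburger arithmetic. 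As in the proof of \autoref{lem:TermSeparationforZR}, the sequence $\alpha_k := v_p(u_i(\overline{a_k})/u_j(\overline{a_k}))$ is either constant or strictly monotone on $I+(c)+J$. Moreover, for each standard $l\in\mathbb{Z}$ the sign of $\alpha_k - \beta - l$, where $\beta = v_p(r_j(b)/r_i(b))$, is constant on $I+J$.

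For a single pair $(i,j)$ the analysis splits into two cases. If $\alpha_k$ is constant, then after dropping a point $i_0$ from $I$ (passing to a truncation), $\alpha_k = \alpha_{i_0}$ can be absorbed into the $b$-side as a parameter outside $(I',J')$, and the difference becomes a fixed value independent of $k$. If $\alpha_k$ is strictly monotone, I claim that for $k$ in a truncation, $\alpha_k-\beta$ is either $>l$ for all standard $l$ or $<-l$ for all standard $l$, uniformly in $k$ including $c$. This is the main obstacle: one must rule out $\alpha_k-\beta$ taking finitely many standard values for $k\in I+J$, and exclude $c$ sitting in a different regime. First, $\alpha_k-\beta$ is injective in $k$, so by the pigeonhole principle it cannot stay within a standard window $[-l,l]$ on an infinite set; hence on $I+J$ it lies outside every standard window. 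Next, since $(I,J)$ is Dedekind and the values $\alpha_k$ are monotone, $\alpha_c$ lies strictly between $\alpha_i$ and $\alpha_j$ for $i\in I$, $j\in J$. The only bad case is $\alpha_i < \beta - \mathbb{Z} $ for all $i\in I$ and $\alpha_j > \beta+\mathbb{Z}$ for all $j\in J$ with $\alpha_c$ near $\beta$. This is excluded by applying the constancy of sign on $I+J$ to the formula comparing $\alpha_k$ with $\beta$: the sign on $I$ and the sign on $J$ must agree, and monotonicity then places $\alpha_c$ on the same side. Finally, the distance from $\beta$ is nonstandard at $c$ as well, because differences $\alpha_c - \alpha_i$ are nonstandard by indiscernibility.

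With these comparisons in hand, I finish as follows. After finitely many truncations, every pair of summands is either in an \emph{infinitely separated} relation, uniform in $k$, or has a difference of valuations equal to a fixed element (constant in $k$) after moving $\overline{a_{i_0}}$-parameters to the $b$-side. If some summand is infinitely dominant, meaning its valuation is minimal and infinitely below every other, then the valuation of the sum equals $v_p(T_{i}(k)) = v_p(u_i(\overline{a_k})r_i(b))$ and we are done. Otherwise, group the summands of minimal valuation; within the group the ratios $T_j/T_{i}$ have constant valuations $\delta_j$. Write $T_j(k) = T_i(k)\cdot \bigl(u_j(\overline{a_k})/u_i(\overline{a_k})\bigr)\cdot\bigl(r_j(b)/r_i(b)\bigr)$. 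The middle factor has valuation $\alpha_{i_0}$ and can be written as $\pi$ of a parameter term times a unit. So the group sum becomes $T_i(k)\cdot\sum_j w_j(\overline{a_k})s_j(b)$, where the $w_j$ have valuation $0$ and the $s_j$ are new terms. I would then repeat the argument for $\sum_j w_j s_j$, with the remaining infinitely larger summands contributing nothing. To guarantee that the recursion terminates, I would instead use the identity $\sum_j T_j = T_i\bigl(1+\sum_{j\ne i}T_j/T_i\bigr)$. This sum still has $m$ terms, so I would induct on the number of summands whose valuation is not infinitely separated from the minimum, using Krasner/henselian-type cancellation: when the valuations of the units are constant, the residue-type behaviour is itself captured by a term in $\overline{a_k}$ alone after truncation. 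This termination argument is the step I am least sure of.
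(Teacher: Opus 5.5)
Your pairwise comparison of summand valuations is sound: placing $c$ by monotonicity in the value group and the Dedekind cut works. It disposes of the case where one summand is infinitely dominant. The gap is in the remaining case, which is the actual content of the lemma.

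When several summands have valuations within a standard distance of one another, $v_p$ of their sum is governed by cancellation. Cancellation is not determined by the valuations of the summands, and it depends jointly on $\overline{a_k}$ and $b$. Already with two summands, after normalizing, $v_p(1+w(\overline{a_k})s(b)) = v_p(w(\overline{a_k})+s(b)^{-1})$ with $v_p(w(\overline{a_k}))=v_p(s(b))=0$. This is the distance from $w(\overline{a_k})$ to $-s(b)^{-1}$.

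Rewriting the group sum as $T_i(1+\sum_{j\neq i}T_j/T_i)$ does not decrease the number of non-separated summands, so the induction you sketch has no decreasing measure. The assertion that the cancellation is ``captured by a term in $\overline{a_k}$ alone after truncation'' is unsupported, and the correcting factor must in general involve $b$. For instance, when $v_p$ of the normalized sum is constant in $k$, it equals its value at a fixed $i_0\in I$, which is $v_p$ of a term in $b$ over the parameter $\overline{a_{i_0}}$.

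Moreover, nothing in your argument controls the cancellation at $k=c$. Indiscernibility over $b$ is only available on $I+J$, and the monotonicity trick in the value group says nothing about valuations of sums.

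The missing idea, which is how the paper proceeds, is a differencing step made uniform across $c$ by distality of the pure field.
\begin{itemize}
\item Normalize so that the last summand is $1$, and put $x_k=\sum_{i\le m}u_i(\overline{a_k})r_i(b)$.
\item By distality of $(\mathbb{Q}_p;+,\cdot)$ (\autoref{rmk:DistalImplications} together with \autoref{lem:BasicsofDistality}~(3)), $(\overline{u}(\overline{a_k}))_{k\in I+(c)+J}$ is indiscernible over $r_1(b),\dots,r_m(b)$ in the field language.
\item By the isosceles property, the same equality among $v_p(x_k+1)$, $v_p(x_l+1)$, $v_p(x_k-x_l)$ therefore holds for all $k<l$ in $I+(c)+J$, including pairs involving $c$.
\item Either $v_p(x_k+1)$ is constant, hence equal to $v_p(x_{i_0}+1)$ after removing $i_0$ from $I$.
\item Or, after truncating at some $j_0\in J$ (or $i_0\in I$), $v_p(x_k+1)=v_p(x_k-x_{j_0})$. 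Here $x_k-x_{j_0}=\sum_{i\le m}(u_i(\overline{a_k})-u_i(\overline{a_{j_0}}))r_i(b)$ has one fewer summand, so induction on $m$ applies.
\end{itemize}
This handles dominance and cancellation uniformly, and makes your pairwise valuation analysis unnecessary.
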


\begin{proof}
    We proceed by induction on $m$. The case $m = 1$ is trivial. Suppose that the claim holds for $m$.
    We may assume that $u_{m+1}(\overline{a_k})$ and $r_{m+1}(b)$ are both not $0$ for all $k \in I+(c)+J$. Moreover, since
    \begin{equation*}
        v_p\left( \sum_{i=1}^{m+1} u_i(\overline{a_k}) r_i(b) \right) = v_p(u_{m+1}(\overline{a_k}) r_{m+1}(b)) + v_p\left( \sum_{i=1}^m \frac{u_i(\overline{a_k})}{u_{m+1}(\overline{a_k})} \frac{r_i(b)}{r_{m+1}(b)} + 1 \right),
    \end{equation*}
    we may further assume that $u_{m+1}(\overline{a_k}) = r_{m+1}(b) = 1$.

    Let $\overline{u}(\overline{a_k}) = (u_1(\overline{a_k}),\dots,u_m(\overline{a_k}))$ for $k \in I+(c)+J$.
    Since $(\overline{u}(\overline{a_k}))_{k\in I+(c)+J}$ is indiscernible and $(\overline{u}(\overline{a_k}))_{k\in I+J}$ is indiscernible over $\left\{ r_1(b),\dots,r_m(b) \right\}$, the distality of the $p$-adic fields (\autoref{rmk:DistalImplications}) implies that $(\overline{u}(\overline{a_k}))_{k\in I+(c)+J}$ is indiscernible over $\left\{ r_1(b),\dots,r_m(b) \right\}$ in the language $\left\{ +,\cdot \right\}$.
    Since any three points in an ultrametric space form an isosceles triangle, this indiscernibility ensures that there are three cases to consider:
    
    \setcounter{case}{0}
    \begin{case}
        $v_p\left( \sum_{i=1}^m u_i(\overline{a_k}) r_i(b) + 1 \right) = v_p\left( \sum_{i=1}^m u_i(\overline{a_l}) r_i(b) + 1 \right)$ for all $k < l$ from $I+(c)+J$.

        Choose some $i_0 \in I$ and shrink $I$ to $I'$ such that $i_0 < I'$.
        We then take a term $r(y) = \sum_{i=1}^m u_i(\overline{a_{i_0}}) r_i(y) + 1$, which involves parameters outside $(I',J)$.
    \end{case}

    \begin{case}
        $v_p\left( \sum_{i=1}^m u_i(\overline{a_k}) r_i(b) + 1 \right) = v_p\left( \sum_{i=1}^m u_i(\overline{a_k}) r_i(b) - \sum_{i=1}^m u_i(\overline{a_l}) r_i(b) \right)$ for all $k < l$ from $I+(c)+J$.
        
        Choose some $j_0 \in J$ and shrink $J$ to $J'$ such that $J' < j_0$.
        Then for all $k \in I+(c)+J'$, we have:
        \begin{equation*}
            v_p\left( \sum_{i=1}^m u_i(\overline{a_k}) r_i(b) + 1 \right) = v_p\left( \sum_{i=1}^m \left( u_i(\overline{a_k}) - u_i(\overline{a_{j_0}}) \right) r_i(b) \right).
        \end{equation*}
        Since each $u_i(\overline{x}) - u_i(\overline{a_{j_0}})$ is a term involving parameters outside $(I,J')$, the induction hypothesis applies to the right-hand side.
    \end{case}    
    
    \begin{case}
        $v_p\left( \sum_{i=1}^m u_i(\overline{a_l}) r_i(b) + 1 \right) = v_p\left( \sum_{i=1}^m u_i(\overline{a_k}) r_i(b) - \sum_{i=1}^m u_i(\overline{a_l}) r_i(b) \right)$ for all $k < l$ from $I+(c)+J$.

        We proceed similarly to Case 2 shrinking $I$.
    \end{case}
\end{proof}

\begin{lem}\label{lem:TermSeparationpR}
    For any term $t(\overline{x},y)$, there exist a truncation $(I',J') \subset_{\mathrm{t}} (I,J)$ and terms $\alpha_i(\overline{x})$, $\beta_i(\overline{x})$, $r_i(y)$, $s_i(y)$ ($1 \leq i \leq m$) involving parameters outside $(I',J')$ such that
    \begin{equation*}
        t(\overline{a_k},b) = \dfrac{\sum_i \alpha_i(\overline{a_k}) r_i(b)}{\sum_i \beta_i(\overline{a_k}) s_i(b)}
    \end{equation*}
    for all $k \in I'+(c)+J'$.
    Furthermore, if $t$ is of the form $\pi(t')$, $\lambda(t')$, $S(t')$, or $S^{-1}(t')$, then $t(\overline{a_k},b) = \alpha(\overline{a_k}) r(b)$, and $\alpha(\overline{a_k}), r(b) \in (p^\mathbb{Z})^\mathcal{U}$.
    Moreover, either $\alpha$ or $r$ equals $1$ for $\lambda$, $S$, or $S^{-1}$.
\end{lem}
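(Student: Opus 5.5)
We argue by induction on the complexity of $t$, keeping at each stage a representation of $t(\overline{a_k},b)$ as a quotient of two ``separated sums'' $\sum_i \alpha_i(\overline{a_k}) r_i(b)$ on a truncation. Each inductive step may shrink the truncation finitely often, and parameters outside a truncation stay outside any smaller one.

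Variables and constants are trivial. For the field operations $+,-,\cdot,{}^{-1}$ applied to terms already in quotient form, we put the results over a common denominator. The product of two separated sums is again a separated sum, because $\alpha_i(\overline{x})\alpha'_j(\overline{x})$ is a term in $\overline{x}$ and $r_i(y)r'_j(y)$ is a term in $y$. We then take the smaller of the two truncations. Division by zero is harmless: by indiscernibility, whether the denominator vanishes is constant along the sequence, using the assumption of \autoref{dfn:Distality} together with distality of the $p$-adic field as in \autoref{lem:DecompositionofValuation}. If it vanishes, the term is constantly $0$.

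The main case is $t=\pi(t')$. We write $t'(\overline{a_k},b)=N_k/D_k$ with $N_k,D_k$ separated sums. Then $\pi(t')=\pi(N_k)\pi(D_k)^{-1}$, since $\pi$ is multiplicative on $U^\times$; the zero case is again constant and gives $\pi(0)=1$. We apply \autoref{lem:DecompositionofValuation} to $N_k$ and to $D_k$ and pass to a common truncation. This gives $v_p(N_k)=v_p(u(\overline{a_k})r(b))$ and similarly for $D_k$, so $\pi(N_k)=\pi(u(\overline{a_k}))\,\pi(r(b))$. Hence
\[
\pi(t'(\overline{a_k},b))=\pi\bigl(u(\overline{a_k})u'(\overline{a_k})^{-1}\bigr)\cdot\pi\bigl(r(b)r'(b)^{-1}\bigr),
\]
which has the required form $\alpha(\overline{a_k})r(b)$ with both factors in $(p^\mathbb{Z})^\mathcal{U}$.

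For $\lambda,S,S^{-1}$ (only in the $p^R$ case) we first use $\lambda=\lambda\circ\pi$, $S=S\circ\lambda$, $S^{-1}=S^{-1}\circ\lambda$. This reduces to computing $\lambda(\alpha(\overline{a_k})r(b))$ with $\alpha(\overline{a_k}),r(b)\in(p^\mathbb{Z})^\mathcal{U}$. Under the isomorphism $((p^\mathbb{Z})^\mathcal{U};<_v,\cdot,(p^R)^\mathcal{U})\cong$ a monster model of $(\mathbb{Z};<,+,R)$, this is exactly the inductive step for $\lambda(u(\overline{a_k})+r(b))$ in the proof of \autoref{lem:TermSeparationforZR}. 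That step uses only the indiscernibility hypotheses, which transfer because the interpreted structure is $\emptyset$-definable. It shows that, on a further truncation, $\lambda$ of the product equals either $\lambda$ applied to an $\overline{x}$-term with parameters outside the truncation (for example $S(\alpha'(\overline{a_k}))$ or $\lambda(\alpha'(\overline{a_k}))$, after the rescaling by $\alpha(\overline{a_{i_0}})$ used there), or $S^z$ applied to a $y$-term. In the first outcome we take $r=1$, in the second $\alpha=1$.

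We expect the main obstacle to be bookkeeping rather than mathematics: the translation of the $\mathbb{Z}$-argument must still produce terms in the present language with parameters outside the truncation. The rescaling $\alpha(\overline{a_k})\alpha(\overline{a_{i_0}})^{-1}$ must be recorded as such a term, and the case distinction in \autoref{lem:TermSeparationforZR} on whether $\alpha$ dominates $S^m(r(b))$ uses $<_v$ and $\approx$ in the multiplicative setting. Both of these should go through verbatim.
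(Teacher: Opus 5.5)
Your proposal is correct and follows the paper's proof. Both argue by induction on the term: the $\pi$ case comes from \autoref{lem:DecompositionofValuation} together with the fact that equal valuations give equal $\pi$-values and that $\pi$ is multiplicative, and the $\lambda,S,S^{-1}$ cases reduce via $\lambda=\lambda\circ\pi$ to the argument of \autoref{lem:TermSeparationforZR} in the multiplicative copy of $(\mathbb{Z};<,+,R)$. You also spell out bookkeeping the paper leaves implicit: common denominators, intersecting truncations, and constancy of vanishing denominators via distality of the $p$-adic field. All of that is sound.
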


\begin{proof}
    The proof is by induction on the complexity of $t$.
    Nontrivial cases are when $t$ is of the form $\pi(t')$, $\lambda(t')$, $S(t')$, or $S^{-1}(t')$.
    The case for $\pi$ follows from \autoref{lem:DecompositionofValuation}, as $v_p(x) = v_p(y) \Rightarrow \pi(x) = \pi(y)$.
    For $\lambda$, $S$, and $S^{-1}$, we may assume that $t'(\overline{a_k},b) = \alpha(\overline{a_k}) r(b)$ and $\alpha(\overline{a_k}), r(b) \in (p^\mathbb{Z})^\mathcal{U}$ since e.g. $\lambda = \lambda \circ \pi$.
    We then proceed as in \autoref{lem:TermSeparationforZR} to obtain the desired form.
\end{proof}
\subsection{Distality and \texorpdfstring{Poincar\'e}{Poincaré} series}
\label{sec:QRDistal}

The $p$-adic field $(\mathbb{Q}_p;+,\cdot)$ is distal (\autoref{rmk:DistalImplications}).
Additionally, the structures $(p^\mathbb{Z};<_v,\cdot) \cong (\mathbb{Z};<,+)$ and $(p^R;<_v,(\cdot \equiv_n i)_{n,i})$ are distal as well (\autoref{thm:NIPofPA} and \autoref{thm:ColoredOrderDpminimality}).

\begin{thm}
    $(\mathbb{Q}_p;+,\cdot,p^\mathbb{Z})$ is distal.
\end{thm}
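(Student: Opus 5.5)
We apply \autoref{thm:DistalityCriterion} with $\widetilde{\mathcal{N}}$ a monster model $\mathcal{U}$ of $(\mathbb{Q}_p;+,\cdot,p^\mathbb{Z})$. We take $\mathcal{N} = (U;+,\cdot)$ and $R = (p^\mathbb{Z})^\mathcal{U}$, with $\mathcal{R} = ((p^\mathbb{Z})^\mathcal{U};<_v,\cdot)$. The projection is $\lambda = \pi$, and we set $\mathcal{F}_N = \left\{ +,-,\cdot,{}^{-1} \right\}$ (with ${}^{-1}$ extended by $0^{-1}=0$ so it is total and $\emptyset$-definable) and $\mathcal{F}_R = \emptyset$. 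Thus $\mathcal{F}$-terms are exactly the $\left\{ +,-,\cdot,{}^{-1},\pi \right\}$-terms of \autoref{sec:QRTerm}. Clearly $\mathcal{R}$ is $\emptyset$-definable in $\mathcal{U}$, since $<_v$ is definable from the valuation.

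Condition (1): $\mathcal{N}$ is distal by \autoref{thm:QpDpminimal} and \autoref{rmk:DistalImplications}. $\mathcal{R}$ is isomorphic to a model of the theory of $(\mathbb{Z};<,+)$, which is distal by \autoref{thm:NIPofPA}.

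Conditions (2) and (3): a set $M$ with $\langle M\rangle_\mathcal{F} = M$ is a subfield (containing $0,1$ if we include constants, or we adjoin the prime field harmlessly) closed under $\pi$. Then condition (2) is exactly \autoref{lem:ElementaryforpZ1} with $K = M$. Condition (3) is \autoref{lem:ElementaryforpZ2}: the hypothesis that $\pi(x) \in M$ for all $x \in \langle bM\rangle_{\mathcal{F}_N} = M(b)$ matches. One small point is that the criterion uses types over $M$ in $\mathcal{N}$, which agrees with the field-language type hypothesis there.

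Condition (4), variable separation, is the main step. Given a term $t(\overline{x},y)$, \autoref{lem:TermSeparationpR} gives a truncation and a representation
\[
t(\overline{a_k},b) = \frac{\sum_i \alpha_i(\overline{a_k}) r_i(b)}{\sum_i \beta_i(\overline{a_k}) s_i(b)}.
\]
This is of the form $u(\overline{r}(\overline{a_k}),\overline{s}(b))$ with $u$ an $\mathcal{F}_N$-term, using the variables $\alpha_i,\beta_i$ and $r_i,s_i$. When $t = \pi(t')$, the lemma gives $t(\overline{a_k},b) = \alpha(\overline{a_k})r(b)$ with $r(b) \in (p^\mathbb{Z})^\mathcal{U}$. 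This satisfies the extra requirement that $\overline{s}(b)\in R$, with $u$ being multiplication. The only care needed is to check that the parameters lie outside the truncation and that the lemma's induction (via \autoref{lem:DecompositionofValuation}) covers ${}^{-1}$ at $0$. With all four conditions verified, \autoref{thm:DistalityCriterion} yields distality.
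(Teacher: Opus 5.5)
Your proposal is correct and matches the paper's proof: it applies \autoref{thm:DistalityCriterion} with $\mathcal{N} = (\mathbb{Q}_p;+,\cdot)$, $\mathcal{R} = (p^\mathbb{Z};<_v,\cdot)$, $\mathcal{F}_N = \{+,-,\cdot,{}^{-1}\}$, $\mathcal{F}_R = \emptyset$ and $\lambda = \pi$, and verifies the conditions via \autoref{lem:ElementaryforpZ1}, \autoref{lem:ElementaryforpZ2} and \autoref{lem:TermSeparationpR}. Your additional remarks, namely that a nonempty $\mathcal{F}$-closed set contains $\pi(0)=1$ and is therefore a $\pi$-closed subfield, and the $0^{-1}=0$ convention, are harmless bookkeeping that the paper leaves implicit.
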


\begin{proof}
    We apply the criterion in \autoref{thm:DistalityCriterion} by setting $\mathcal{N} = (\mathbb{Q}_p;+,\cdot)$, $\mathcal{R} = (p^\mathbb{Z};<_v,\cdot)$, $\mathcal{F}_N = \left\{ +,-,\cdot,{}^{-1} \right\}$, $\mathcal{F}_R = \emptyset$, and $\lambda = \pi$.
    The required conditions are satisfied by \autoref{lem:ElementaryforpZ1}, \autoref{lem:ElementaryforpZ2}, and \autoref{lem:TermSeparationpR}.
\end{proof}

\begin{thm}
    $(\mathbb{Q}_p;+,\cdot,p^\mathbb{Z},p^R)$ is distal.
\end{thm}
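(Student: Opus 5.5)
We apply \autoref{thm:DistalityCriterion} in the same way as for $(\mathbb{Q}_p;+,\cdot,p^\mathbb{Z})$. We take $\widetilde{\mathcal{N}} = (\mathbb{Q}_p;+,\cdot,p^\mathbb{Z},p^R)$ and $\mathcal{N} = (\mathbb{Q}_p;+,\cdot,p^\mathbb{Z})$, which is distal by the preceding theorem. We take $\mathcal{R} = (p^R;<_v,(\cdot \equiv_n i)_{n,i})$, which is $\emptyset$-definable in $\widetilde{\mathcal{N}}$ and distal by \autoref{thm:ColoredOrderDpminimality} and \autoref{rmk:DistalImplications}. The remaining choices are $\mathcal{F}_N = \left\{ +,-,\cdot,{}^{-1},\pi \right\}$ (all $\emptyset$-definable in $\mathcal{N}$), $\mathcal{F}_R = \left\{ S,S^{-1} \right\}$, and the projection $\lambda$ onto $p^R$. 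Then $\mathcal{F}$-terms are exactly the terms of \autoref{sec:QRTerm}, and condition (1) holds.

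For condition (2), let $M$ be $\mathcal{F}$-closed, hence an $L_{p^R}$-substructure, and let $b,b' \in p^R$ have the same $\mathcal{R}$-type over $M \cap p^R$. The identity on $M$ lies in $\mathcal{I}$ of \autoref{dfn:BackandForthSystemforQp}: conditions (1)--(4) hold trivially. If $b \in M$, the type condition forces $b' = b$. Otherwise \autoref{lem:ExtendabilityforpR}, applied with $c = b'$, extends the identity to $\widetilde{f} \in \mathcal{I}$ with $\widetilde{f}(b) = b'$. By \autoref{thm:PEEinpR}, $\widetilde{f}$ is elementary, so $\mathrm{tp}^{\widetilde{\mathcal{N}}}(b/M) = \mathrm{tp}^{\widetilde{\mathcal{N}}}(b'/M)$.

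For condition (3), take $b,b'$ with equal $\mathcal{N}$-types over $M$ such that $\lambda(x) \in M$ for all $x \in \langle bM \rangle_{\mathcal{F}_N}$. This is exactly the hypothesis of \autoref{lem:ExtendabilityforAllinpR}, whose type condition is taken in $(U;+,\cdot,p^\mathbb{Z})$, i.e.\ in $\mathcal{N}$. Applying that lemma to the identity on $M$ with $c = b'$ gives $\widetilde{f} \in \mathcal{I}$ sending $b$ to $b'$. Again \autoref{thm:PEEinpR} yields the equality of $\widetilde{\mathcal{N}}$-types.

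Condition (4) is \autoref{lem:TermSeparationpR}. There, each term is written as a quotient of sums $\sum_i \alpha_i(\overline{a_k}) r_i(b)$, which is an $\mathcal{F}_N$-term $u$ applied to the tuples $\overline{r}(\overline{a_k}) = (\alpha_i,\beta_i)$ and $\overline{s}(b) = (r_i,s_i)$. For $h \in \left\{ \lambda,S,S^{-1} \right\}$ the value is $\alpha(\overline{a_k})r(b)$ with one factor equal to $1$, hence it is a single $\mathcal{F}$-term in $\overline{a_k}$ or in $b$.

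The main obstacle is the clause in \autoref{dfn:SeparationofTerms} requiring $\overline{s}(b) \in R$ for such $h$. If the factor depending on $b$ is the nontrivial one, then $t(\overline{a_k},b) = r(b)$, and $r(b)$ is itself a value of $\lambda$, $S$ or $S^{-1}$ (as in the proof of \autoref{lem:TermSeparationforZR}, it is $S^z(r'(b))$), so it lies in $p^R$ and we take $u$ to be the identity. If the nontrivial factor depends only on $\overline{a_k}$, we take $\overline{s}$ to be empty. One must check that the reduction in \autoref{lem:TermSeparationpR} to the case $t' = \alpha r$ with $\alpha, r \in p^\mathbb{Z}$ really delivers these forms, and in particular that the $\lambda(r(b))$-type outputs of \autoref{lem:TermSeparationforZR} are recorded as $\mathcal{F}$-terms evaluated at $b$ lying in $p^R$. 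Once this bookkeeping is verified, the criterion gives distality.
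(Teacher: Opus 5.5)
Your proposal is correct and follows the paper's argument. You instantiate \autoref{thm:DistalityCriterion} with the same $\mathcal{N}$, $\mathcal{R}$, $\mathcal{F}_N$, $\mathcal{F}_R$, and verify the conditions via \autoref{lem:ExtendabilityforpR}, \autoref{lem:ExtendabilityforAllinpR} (applied to the identity on $M$) and \autoref{lem:TermSeparationpR}, only in more detail than the paper. The bookkeeping you flag at the end is automatic: if the $b$-factor is the nontrivial one, $r(b)$ equals the value $t(\overline{a_k},b)$ of $\lambda$, $S$ or $S^{-1}$ and so lies in $p^R$; otherwise you take $\overline{s}$ empty.
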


\begin{proof}
    We again apply \autoref{thm:DistalityCriterion}, this time setting $\mathcal{N} = (\mathbb{Q}_p;+,\cdot,p^\mathbb{Z})$, $\mathcal{R} = (p^R;<_v,(\cdot \equiv_n i)_{n,i})$, $\mathcal{F}_N = \left\{ +,-,\cdot,{}^{-1},\pi \right\}$, and $\mathcal{F}_R = \left\{ S,S^{-1} \right\}$.
    The conditions are verified by \autoref{lem:ExtendabilityforpR}, \autoref{lem:ExtendabilityforAllinpR}, and \autoref{lem:TermSeparationpR}.
\end{proof}

As distal structures possess NIP, $(\mathbb{Q}_p;+,\cdot,p^\mathbb{Z},p^R)$ is an NIP expansion.
By constructing an ICT-pattern of depth $\aleph_0$, one can show that $(\mathbb{Q}_p;+,\cdot,p^\mathbb{Z})$ has dp-rank exactly $\aleph_0$.
We leave the proof to the reader.
The NIP of these expansion is particularly significant in the context of Poincar\'e series.

\begin{dfn}
    For a subset $A$ of $\mathbb{Z}_p^n$ and  $m \geq 0$, let $N_m$ be the cardinality of the set $\left\{ (x_1 \bmod p^m, \dots, x_n \bmod p^m) \in (\mathbb{Z} / p^m \mathbb{Z})^n \mid \overline{x} \in A \right\}$.
    The \emph{Poincar\'e series} of $A$ is the formal power series $P_A(T) = \sum_{m=0}^\infty N_m T^m$.
\end{dfn}

A celebrated result by Denef \cite{Denef1984} states that if $A \subseteq \mathbb{Z}_p^n$ is definable in the pure $p$-adic field $(\mathbb{Q}_p;+,\cdot)$, its Poincar\'e series is a rational function.
This rationality extends to any dp-minimal extension of the $p$-adic field by combining Simon and Walsberg \cite{Simon2025} (dp-minimal $\Rightarrow$ P-minimal) and Kovacsics and Leenknegt \cite{Kovacsics2016} (P-minimal $\Rightarrow$ rationality).
While Denef \cite{Denef1985} showed that $(\mathbb{Q}_p;+,\cdot,p^\mathbb{Z})$ also admits rationality despite having dp-rank $\aleph_0$, we show that this does not hold for all NIP expansions.

\begin{prop}
    The Poincar\'e series of $p^R$ is not a rational function.
\end{prop}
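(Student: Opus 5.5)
We will compute the counting function $N_m$ for $A = p^R \cap \mathbb{Z}_p$ explicitly and reduce rationality of $P_{p^R}(T)$ to rationality of the generating series of the $0/1$ indicator sequence of $R$. Almost sparseness will then rule this out.

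\textbf{Step 1: computing $N_m$.} Only the elements $p^r$ with $r \in R$ and $r \geq 0$ lie in $\mathbb{Z}_p$, since $R$ is strictly increasing and has at most finitely many negative terms. For $m \geq 1$, the residue of $p^r$ modulo $p^m$ is nonzero and equals $p^r$ when $0 \leq r < m$. These residues are pairwise distinct. Every $r \geq m$ gives the residue $0$, and such $r$ exist because $R$ is infinite. Hence
\begin{equation*}
    N_m = \#\left\{ r \in R \mid 0 \leq r < m \right\} + 1 \quad (m \geq 1), \qquad N_0 = 1.
\end{equation*}
Consequently, for $m \geq 1$ the difference $N_m - N_{m-1}$ equals $1$ if $m-1 \in R$ and $0$ otherwise. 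This gives
\begin{equation*}
    (1-T)P_{p^R}(T) = Q(T) + \sum_{r \in R,\, r \geq 0} T^{r+1}
\end{equation*}
for some polynomial $Q$, which absorbs the boundary effects at small $m$.

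\textbf{Step 2: reduction to eventual periodicity.} Suppose $P_{p^R}$ were rational. Then $F(T) = \sum_{r\in R, r\geq 0} T^{r+1}$ is rational too, so its coefficient sequence $(e_n)$ eventually satisfies a linear recurrence $e_n = \sum_{i=1}^k c_i e_{n-i}$ with $c_i \in \mathbb{Q}$. Each $e_n$ lies in $\{0,1\}$, so the block $(e_{n-k},\dots,e_{n-1})$ takes only $2^k$ values and determines all later terms. By pigeonhole, some block repeats, and therefore $(e_n)$ is eventually periodic with some period $q$. Since $R$ is infinite, infinitely many $e_n$ equal $1$. Eventual periodicity then forces $r + q \in R$ for every sufficiently large $r \in R$. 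Hence $r_{n+1} - r_n \leq q$ for all large $n$.

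\textbf{Step 3: contradiction with almost sparseness.} We expect this to be the only point that needs care: the gaps $r_{n+1} - r_n$ must not be eventually bounded. Apply \autoref{dfn:AlmostSparseSequence} (2) to $A = S$ and $B = \mathrm{id}$, noting that $S >_{ae} \mathrm{id}$. This yields $\Delta$ with $S^{\Delta+1}(x) - S^{\Delta}(x) >_{ae} x$. Thus $r_{n+\Delta+1} - r_{n+\Delta} > r_n$ for almost all $n$, and $r_n \to \infty$. So the gaps are unbounded along a subsequence, contradicting Step 2. Alternatively, Claim 1 in the proof of \autoref{lem:OperatorBounded} gives the same conclusion. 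Therefore $P_{p^R}$ is not rational.
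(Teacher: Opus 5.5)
Your proof is correct. It also proves the proposition in the generality in which it is stated, whereas the paper only treats the special case $R = \{2^n \mid n \in \mathbb{N}\}$ ``for simplicity''. Your Step 1 matches the paper's computation; in fact your polynomial $Q$ is exactly $1$, so $(1-T)P(T) = 1 + \sum_{r \in R,\, r \geq 0} T^{r+1}$.

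The two arguments diverge at the non-rationality step. The paper argues directly: if $g(T)$ times $1 + \sum_n T^{2^n+1}$ were a polynomial, then once consecutive exponents are farther apart than $\deg g$, the shifted copies $g(T)T^{2^n+1}$ cannot overlap, so infinitely many nonzero coefficients survive. You instead invoke the classical fact that a rational power series with coefficients in $\{0,1\}$ is eventually periodic (via the linear recurrence and pigeonhole). From this you deduce bounded gaps, and you contradict that using condition (2) of almost sparseness applied to $S >_{ae} S^0$.

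Both arguments rest on the same input, namely that the gaps of $R$ are unbounded. The paper's non-overlap argument is shorter and needs no recurrence, and it would extend verbatim to general almost sparse $R$ once your Step 3 is supplied. Your version makes that step explicit and handles possible negative terms of $R$ by intersecting with $\mathbb{Z}_p$. It also shows more generally that $P_{p^R}$ fails to be rational whenever the indicator sequence of $R$ is not eventually periodic.
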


\begin{proof}
    For simplicity, consider $R = \left\{ 2^n \mid n \in \mathbb{N} \right\}$.
    For each $m \geq 0$, $N_m = \left| \left\{ x \bmod p^m \mid x \in p^R \right\} \right|$ equals $1 + \left| \left\{ n \mid 2^n < m \right\} \right|$.
    Thus, the Poincar\'e series of $p^R$ is given by:
    \begin{equation*}
        P(T) = \sum_{m=0}^\infty T^m + \sum_{n=0}^\infty \sum_{m=2^n+1}^\infty T^m,
    \end{equation*}
    which simplifies to $(1 - T) P(T) = 1 + \sum_{n=0}^\infty T^{2^n+1}$.
    It suffice to show that the right-hand side is not rational.
    If this were a rational function, there would exist nonzero polynomials $f(T), g(T) \in \mathbb{Z}[T]$ such that $f(T) = g(T) \left( 1 + \sum_{n=0}^\infty T^{2^n+1} \right)$.
    However, for sufficiently large $n$, $g(T) T^{2^n+1}$ and $g(T) T^{2^{n+1}+1}$ cannot overlap for $g(T) \left( 1 + \sum_{n=0}^\infty T^{2^n+1} \right)$ to form a polynomial.
\end{proof}

Consequently, $(\mathbb{Q}_p;+,\cdot,p^\mathbb{Z},p^R)$ provides an example of an NIP expansion of the $p$-adic field that lacks the rationality of the Poincar\'e series.
Since this structure has dp-rank $\aleph_0$, a natural question arises regarding structures with lower rank:

\begin{qst}\label{qst:StrongDependenceRationality}
    If an expansion of the $p$-adic field is strongly dependent (i.e., its dp-rank is less than $\aleph_0$), must it have the rationality of the Poincar\'e series?
\end{qst}

In the context of the real field, strong dependence implies the tameness of open definable sets.
To elaborate, let $\mathcal{M}$ be a structure on a topological space $M$ that defines a basis for the topology.
The open core $\mathcal{M}^\circ$ is the reduct of $\mathcal{M}$ whose predicate symbols are the open sets definable in $\mathcal{M}$.
Furthermore, an expansion $\widetilde{\mathcal{M}}$ of $\mathcal{M}$ is $\mathcal{M}$-minimal if every unary set definable in $\widetilde{\mathcal{M}}$ is already definable in $\mathcal{M}$.

\begin{thm}[Walsberg {\cite[Theorem 9.15]{Walsberg2022}}]
    If an expansion $\mathcal{R}$ of $(\mathbb{R};<,+)$ is strongly dependent, then its open core $\mathcal{R}^\circ$ is either o-minimal or $(\mathbb{R};<,+,\alpha \mathbb{Z})$-minimal for some $\alpha > 0$.
\end{thm}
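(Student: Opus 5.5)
Since this theorem is quoted from Walsberg and not proved in the paper, the following is only a reconstruction of how I would try to prove it. The plan is to follow the trichotomy for expansions of $(\mathbb{R};<,+)$ due to Hieronymi and Walsberg. Let $\mathcal{R}$ be a strongly dependent expansion of $(\mathbb{R};<,+)$. Then exactly one of the following holds:
\begin{enumerate}
\item $\mathcal{R}$ defines a dense $\omega$-orderable set, that is, a set admitting a definable order of type $\omega$;
\item $\mathcal{R}$ defines an infinite discrete set;
\item every definable set has interior or is nowhere dense, and in fact the open core $\mathcal{R}^\circ$ is o-minimal (the o-minimal-open-core theorem of Miller and Speissegger, in the form for expansions of $(\mathbb{R};<,+)$).
\end{enumerate}
The proof then consists in ruling out the first alternative and analysing the second.

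\textbf{Step 1: no dense $\omega$-orderable set.} I expect this step to be the main obstacle. Suppose $D$ is a dense $\omega$-orderable definable set with definable successor function $s$. I would use $(D,s)$ to interpret an infinite family of independent codings, as in the standard proof that such structures define every compact set up to Cantor-type approximations. Concretely, I would try to build, for every $n$, an ICT-pattern of depth $n$ from $n$ independent partitions of $D$ by rapidly interleaved subsequences; this already contradicts NIP, not just strong dependence. If this route is too weak, the fallback is to show that $(D,s)$ together with addition defines a copy of $(\mathbb{N};+,\cdot)$ on a subsequence, which is certainly not NIP.

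\textbf{Step 2: the discrete case.} Let $E$ be an infinite definable discrete set; after translating and taking a suitable part of it, assume $E$ is closed and unbounded above. Strong dependence (finite dp-rank along chains of convex cuts, in the spirit of Dolich and Goodrick) should force the gaps of $E$ to be eventually periodic up to finitely many values. Otherwise, using differences of elements of $E$ and the group operation, I would produce arbitrarily deep ICT-patterns, for example from sparse sequences as in $(\mathbb{Z};<,+,R)$, which has dp-rank $\aleph_0$. From eventual periodicity I obtain a definable cyclic group $\alpha\mathbb{Z}$ for some $\alpha>0$, and hence the fractional-part map $x\mapsto x-\alpha\lfloor x/\alpha\rfloor$.

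\textbf{Step 3: reduction to an o-minimal open core.} Given $\alpha\mathbb{Z}$, I would show that every definable open set is a finite boolean combination of sets built from $\alpha\mathbb{Z}$ and sets definable in a structure induced on the interval $[0,\alpha)$. The induced structure on $[0,\alpha)$ has no dense $\omega$-orderable sets by Step 1. The strong-dependence argument of Step 2 also rules out any further discrete set that is not definable from $\alpha\mathbb{Z}$. Therefore, by the trichotomy, that local structure has o-minimal open core. Gluing the local structure with $\alpha\mathbb{Z}$ along the fractional-part decomposition then yields that $\mathcal{R}^\circ$ is $(\mathbb{R};<,+,\alpha\mathbb{Z})$-minimal.
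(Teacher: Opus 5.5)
The paper only quotes this theorem from Walsberg and gives no proof, so I am judging your sketch on its own. It has a genuine gap exactly where the content of the theorem lies.

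\emph{Step 2 asserts the theorem rather than proving it.} You claim that strong dependence forces every definable discrete set to have finitely many gap values in an eventually periodic pattern. That is essentially the statement to be proved, and the mechanism you propose cannot establish it:
\begin{itemize}
\item ICT-patterns of each finite depth refute neither NIP nor, by themselves, strong dependence. Failure of strong dependence requires a single pattern of depth $\aleph_0$.
\item Non-periodicity need not produce anything like a sparse sequence. The gaps of $\{n + \tfrac{1}{n+2} \mid n \in \mathbb{N}\}$ converge to $1$ without being eventually constant. The gaps of a Beatty set $\{\lfloor n\theta \rfloor \mid n \geq 1\}$ with $\theta$ irrational take two values in a Sturmian pattern. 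Each such behaviour needs its own argument producing a depth-$\aleph_0$ pattern, and you give none.
\item Your reduction to a closed set unbounded above is unavailable. Translation and restriction cannot move an accumulation point to $+\infty$, and without multiplication there is no inversion. So discrete sets such as $\{2^{-n} \mid n \in \mathbb{N}\}$ must be excluded by a separate argument. They must be excluded, since the closure of such a set is definable in $\mathcal{R}^\circ$ but not in the locally o-minimal structure $(\mathbb{R};<,+,\alpha\mathbb{Z})$.
\end{itemize}

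\emph{Step 3 depends entirely on a full version of Step 2.} The accumulation case reappears there, because every infinite discrete subset of $[0,\alpha)$ accumulates. Your decomposition of definable open sets along the fractional-part map is asserted, not derived. So Step 3 reduces to Step 2 applied to every definable discrete set, including subsets of $\alpha\mathbb{Z}$, which must turn out to be Presburger-definable.

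\emph{Step 1, which you call the main obstacle, contains errors.} Patterns of all finite depths do not contradict NIP: the distal structure $(\mathbb{Z};<,+,R)$ of this paper has dp-rank $\aleph_0$. The fallback via $(\mathbb{N};+,\cdot)$ cannot be expected in general either. Some expansions of $(\mathbb{R};<,+)$ that define dense $\omega$-orderable sets, for example B\"uchi-automatic ones, have decidable theories. The correct input is a citation: by Hieronymi and Walsberg, a definable dense $\omega$-orderable set yields an interpretation of the monadic second-order theory of $(\mathbb{N};+1)$, which has IP.

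\emph{The trichotomy is misquoted.} Alternatives (1) and (2) can hold simultaneously. Alternative (3) fails for strongly dependent noisy structures such as the dense pair $(\mathbb{R};<,+,\mathbb{Q})$, where $\mathbb{Q}$ neither has interior nor is nowhere dense. What the o-minimal branch actually needs is that an expansion of $(\mathbb{R};<,+)$ defining no infinite discrete set has o-minimal open core. It is cleaner still to work with $\mathcal{R}^\circ$ itself, which is a reduct of $\mathcal{R}$ and hence strongly dependent.
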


In particular, if $\mathcal{R}$ is a strongly dependent expansion of the real field, then $\mathcal{R}^\circ$ is o-minimal, as it defines multiplication and thus cannot be $(\mathbb{R};<,+,\alpha \mathbb{Z})$-minimal.
We may ask if an analogous situation holds for the $p$-adic field:

\begin{qst}
    For any strongly dependent expansion of the $p$-adic field, is its open core P-minimal?
\end{qst}

A positive answer to this question would imply a positive answer to \autoref{qst:StrongDependenceRationality}.
It is easily observed that any subset $A$ of $\mathbb{Z}_p^n$ and its topological closure have the same Poincar\'e series.
By Kovacsics and Leenknegt \cite{Kovacsics2016}, the P-minimality of the open core would ensure rationality for the original expansion as well.

\addcontentsline{toc}{section}{References}
\printbibliography

\end{document}